\numberwithin{equation}{section}
\def\A{\mathcal{ A}_r}
\def\P{\mathbb{P}}
\def\N{\mathbb{N}}
\def\Z{\mathbb{Z}}
\newcommand{\e}{{\varepsilon}}
\newcommand{\E}{\mathbb{E}}
\renewcommand{\P}{\mathbb{P}}
\newcommand{\cP}{\mathcal{P}}
\newcommand{\cD}{\mathcal{D}}
\newcommand{\cA}{\mathcal{A}}
\newcommand{\cR}{\mathcal{R}}
\newcommand{\cB}{\mathcal{B}}
\newcommand{\sF}{\mathscr{F}}
\newcommand{\sG}{\mathscr{G}}
\newcommand{\sI}{\mathscr{I}}
\newcommand{\sP}{\mathscr{P}}
\newtheorem{theorem}{Theorem}
\newtheorem{corollary}[theorem]{Corollary}
\newtheorem{rem}[theorem]{Remark}
\newtheorem{lemma}[theorem]{Lemma}
\newtheorem{definition}[theorem]{Definition}
\begin{document}

\title{Non-fixation for Conservative Stochastic Dynamics on the Line}
\date{\today}
\author{Riddhipratim Basu}
\address{Riddhipratim Basu, International Centre for Theoretical Sciences, Tata Institute of Fundamental Research, Bangalore, INDIA}
\email{rbasu@icts.res.in}
\author{Shirshendu Ganguly}
\address{Shirshendu Ganguly, Departments of Statistics and  Mathematics, UC Berkeley, CA, USA}
\email{sganguly@berkeley.edu}
\author{Christopher Hoffman}
\address{Christopher Hoffman, Department of Mathematics, University of Washington,
Seattle, USA}
\email{hoffman@math.washington.edu}

\maketitle
\begin{abstract} We consider Activated Random Walk (ARW), a model which generalizes the Stochastic Sandpile, one of the canonical examples of self organized criticality. Informally ARW is a particle system on $\Z$ with mass conservation. One starts with a mass density $\mu>0$ of initially active particles, each of which performs a symmetric random walk at rate one and falls asleep at rate $\lambda>0.$ Sleepy particles become active on coming in contact with other active particles. We investigate the question of fixation/non-fixation of the process and show for small enough $\lambda$ the critical mass density for fixation is strictly less than one. Moreover, the critical density goes to zero as $\lambda$ tends to zero. This settles a long standing open question. 
\end{abstract}

\section{The model description and main result}\label{des1}
Self organized criticality (SOC) is a universal property describing systems that have critical points as an attractor. In most of the examples of SOC,  simple local moves give rise to complex global properties. These systems are driven under their natural evolution to  the boundary between  stable and unstable states without  any fine-tuning of parameters. A canonical and widely studied example of such a model is the Abelian Sandpile model proposed by Bak, Tang and Weisenfeld \cite{BTW87}. In this model, in finite volume, particles are added at random, which dissipate across the boundary; the corresponding closed system in the infinite volume setting  is a so-called \emph{fixed energy sandpile} where the total number of particles is conserved. 

In a sequence of influential papers Dickman, Vespignani and their co-authors \cite{DVZ98, VDMZ98, VDMZ00, SOC2} developed a theory of SOC, which has since become widely accepted in statistical physics literature. Their theory predicts a specific relationship between driven dissipative systems and the corresponding systems where total number of particles is conserved; and in particular an \emph{absorbing state phase transition}, exhibited by the latter system as the particle density is varied. In the finite state, even though there is no tuning parameter, they argued that the loss of particles through the sink(s) is balanced with the dynamical addition of new particles, driving the system to the edge of instability.  However subsequently, there has been some controversy in the mathematics and physics community surrounding their predictions. We elaborate more on this in \S~\ref{s:background}.

In the last fifteen years, there has also been significant progress in studying these systems via a combinatorial approach, in particular exploiting the so-called Abelian property. Roughly, the Abelian property in a distributed network stipulates that the system produces the same output regardless of the order in which a sequence of local moves are performed. A systematic mathematical treatment of such systems can be found in \cite{chip}. Even though this might seem a severe restriction, systems with Abelian property are known to produce intricate large-scale patterns starting with relatively simple local rules. In addition to the Abelian sandpile model and a number of cellular automata introduced by Dhar \cite{Dhar} exhibiting SOC, other systems in this class include Internal Diffusion Limited Aggregation (IDLA) and rotor walks (see e.g., Levine, Peres \cite{LP09} and the references therein) which have been subject of extensive studies in recent years, out of which a rich structure has emerged. See Bond, Levine\cite{bond} for an excellent survey on these so-called Abelian networks.

Several variations of the Abelian Sandpile model have been studied in the past with a view to better understand the critical behaviour. Some recent studies (e.g., Bonachela, Mu\~noz \cite{BM08}) have focussed their attention to sandpile models with stochastic update rules. Although the paradigm of Dickman, Vespignani et al. for SOC is widely believed to contain both deterministic and stochastic sandpile models \cite{VD05, dCVdSD09}, adding randomness can lead to a qualitatively different critical behaviour. The best known example in this random setting is the \textbf{Stochastic Sandpile model} (SSM). On the graph $\Z$, this model starts with particles at every site distributed independently from a fixed distribution  with mean $\mu$. Any unstable site (a site with at least two particles) topples by emitting two particles which take independently one step of the simple random walk.  The most important phenomenon in the study of \rm{SSM} is the phase transition related to the  long term fixation/non-fixation of the system. Empirically when the density is sufficiently small, one observes  activity in any finite window about the origin only finitely often, while when the density is sufficiently large, the system does not fixate.

Getting nontrivial bounds on the critical behaviour has turned out to be extremely challenging. To make things mathematically tractable  a closely related family of continuous time interacting particle systems known as \textbf{Activated Random Walk} was introduced. 
This system consists of particles which are in one of two states, active or sleepy.
Activated random walk starts with particles distributed independently on the sites of a graph  from a distribution with mean $\mu.$  Initially all of the particles are in the active state.
 As the system evolves each active particle performs a continuous time nearest neighbour symmetric random walk  at rate $1$ until it falls asleep, which happens with rate $\lambda$. A particle in the sleepy state does not move but it becomes active whenever an active particle occupies its site.
We denote this system on $\Z$ by $\rm{ARW} (\mu,\lambda)$.

 Note that the case when $\lambda$ is infinity is a variant of SSM. For a fixed sleep rate $\lambda$, as the particle density $\mu$ increases, it is expected that the system shows a transition from almost sure local fixation to staying active forever almost surely. One of the first mathematically rigorous results about ARW was established in \cite{RS12} where it is shown that for every $\lambda>0$ there is a critical particle density $\mu_c:=\mu_{c}(\lambda)\in [\frac{\lambda}{\lambda+1},1]$ such that ${\rm ARW}(\mu,\lambda)$  locally fixates almost surely when $\mu<\mu_c(\lambda)$ and stays active almost surely when $\mu>\mu_c(\lambda)$. Quoting from \cite[Section 7]{RS12},
\begin{quote}
\emph{``A proof that $\mu_{c} < 1$  for the SSM and ARW remains as an open problem in any dimension. For the ARW, it should hold for all $\lambda$, and moreover $\mu_{c}(\lambda)\to 0$ as
$\lambda \to 0$. Yet, even a proof that $\mu_{c}(\lambda) < 1$ for some $\lambda > 0$ is missing."}
\end{quote}
 
These problems have been re-iterated in \cite{DRS10}, \cite{CRS14} and \cite{T14}. In this article we only look at $\rm{ARW}$ and  for easy reference purpose we now record below the aforementioned conjectures in our setting.  \\

\textbf{Conjecture 1.} For $\rm{ARW}$ on $\Z$, for all sleep rate $\lambda> 0$, the critical mass density $\mu_c{(\lambda)}< 1.$ \\

\textbf{Conjecture 2.} In the same set up as \textbf{Conjecture 1}, $\mu_c{(\lambda)}\to 0$ as $\lambda\to 0$.\\

The main result of this paper  immediately provides a positive resolution of \textbf{Conjecture 1} for small $\lambda$. 

\begin{theorem}
\label{main}
Given $\mu>0$ there exists $\lambda_{\mu}>0$ such that ${\rm ARW}(\mu,\lambda)$ on $\Z$  stays active almost surely for all $\lambda < \lambda_{\mu}$.
\end{theorem}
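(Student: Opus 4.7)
\smallskip
\noindent I would start from the Diaconis--Fulton site-wise representation: at every $x\in\Z$ pre-sample an i.i.d.\ instruction stack $(\tau_x^k)_{k\ge 1}$, each entry independently ``step left'' or ``step right'' with probability $\tfrac{1}{2(1+\lambda)}$ and ``sleep'' with probability $\tfrac{\lambda}{1+\lambda}$. Let $m_V(x)$ denote the odometer of legal stabilization on a finite interval $V\subset\Z$ with sinks at $\partial V$; by the Abelian property and the criterion of \cite{RS12}, non-fixation of ${\rm ARW}(\mu,\lambda)$ on $\Z$ is equivalent to $m_{V_n}(0)\to\infty$ as $V_n\uparrow\Z$. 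So it suffices, for any $\mu>0$ and any $\lambda<\lambda_\mu$, to exhibit a high-probability lower bound on $m_{V_n}(0)$ that grows with $n$.

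\smallskip
\noindent Given $\mu>0$, I would work at a mesoscopic scale $L=L(\lambda)$ with $L\gg 1/\mu$ but $\mu\lambda L^{3}\ll 1$, e.g.\ $L=\lfloor\lambda^{-1/4}\rfloor$ for $\lambda$ small. Partition $V_n$ into consecutive blocks of length $L$ and call a block $B$ \emph{good} if (i) it initially contains at least $\mu L/2$ active particles, and (ii) the first $K=C\mu L^{2}$ instructions at every $x\in B$ are all steps (no sleep), where $C$ is chosen so that $K$ upper bounds, with overwhelming probability, the number of instructions any one site in $B$ would read if the particles of $B$ were processed one at a time as independent simple random walks absorbed at $\partial B$. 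Both conditions hold with probability $1-o(1)$ uniformly in $B$ under the above scaling, by Chernoff and a union bound respectively (the latter giving failure probability at most $LK\cdot\tfrac{\lambda}{1+\lambda}\le C\mu\lambda L^{3}=o(1)$). The central claim is an \emph{amplification lemma}: a good block $B$, when its particles are stabilised with sinks at its two endpoints, emits at least $c(\mu)L$ active particles out of each endpoint. I would prove this by using the freedom of toppling order granted by the Abelian property to process particles one at a time, and at each site to read instructions off the stack exactly as an independent simple random walk would dictate; condition (ii) guarantees that no sleep instruction is ever read before each walker exits $B$, while condition (i) turns the $\Omega(1)$ per-walker exit probability into a linear-in-$L$ outflow.

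\smallskip
\noindent Given the amplification lemma, a one-dimensional renormalisation on the block process reduces non-fixation to the statement that activity seeded at the origin propagates through arbitrarily many blocks. Since good blocks have density close to $1$ and each emits $\Omega(L)\gg 1$ units of activity to its neighbour (more than enough to power through the occasional bad block in between, whose sleep-budget is still only $O(\lambda K L)=o(L)$ in any event), activity reaches arbitrarily distant blocks with high probability and $m_{V_n}(0)\to\infty$ follows. The real technical hurdle is the amplification lemma itself: instruction stacks at different sites are coupled through revisits by many walkers, so one must schedule the legal topplings so that the random-walk coupling remains simultaneously valid for every particle and no site exhausts its safe prefix before all walkers are processed, and one must also avoid circular dependencies created by walkers leaving $B$ and re-entering it through neighbouring blocks. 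Once this per-block estimate is in place, the block-level propagation and its translation back into an unbounded odometer at the origin are comparatively standard for one-dimensional activated particle systems.
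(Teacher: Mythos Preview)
Your reduction to $m_{V_n}(0)\to\infty$ via the Diaconis--Fulton representation is correct, and the amplification lemma---that a good block stabilized with sinks at its endpoints emits $\Theta(\mu L)$ particles through each side---is provable essentially as you outline. The gap is in the propagation step, and it is not minor. Your block scale $L$ and sleep-free budget $K=C\mu L^{2}$ are \emph{fixed} once $\lambda$ is fixed; the amplification lemma together with monotonicity yields only $m_{V_n}(0)\ge c\mu L^{2}$, a constant independent of $n$. To force $m_{V_n}(0)\to\infty$ you need activity from arbitrarily many distant blocks to pass through the origin, but then the odometer at every site of the origin's block must exceed $K$, after which sleep instructions appear and your per-block analysis says nothing. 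Indeed, if no particle ever slept the odometer at the origin would be $\Theta(\mu n^{2})$, so the relevant instruction depth lies far beyond your safe prefix. The sentence ``block-level propagation \ldots\ is comparatively standard'' is exactly where the argument breaks: in the symmetric setting there is no drift to carry a wavefront, and controlling where $\Theta(\mu n)$ particles eventually sleep inside a box of size $n$ is the crux of the problem. You have the difficulty located backwards---the single-block estimate is the routine part.

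The paper avoids this trap by using infinitely many growing scales rather than one fixed $L$. It takes exponentially growing boxes $V_\ell=[0,2^{3\ell+2}]$ with pairwise disjoint middle halves $I_\ell$; in round $\ell$ it stabilizes only the particles from $I_\ell$ on $V_\ell$ using fresh stack entries, so the rounds are independent, and each touches the origin with probability at least $\tfrac14$ (by left--right symmetry, once one knows a particle must reach $\partial V_\ell$). Summing over rounds gives $u(0)=\infty$ almost surely. The substance is Lemma~\ref{key}: with at least $\mu r$ active particles in $[-r,r]$, stabilization on $[-2r,2r]$ fails to touch the boundary with probability at most $e^{-cr}$. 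This is proved by passing to a \emph{labeled} variant of the dynamics on a renormalized lattice $K\Z$ (with $K\gg 1/\mu$), in which particles emitted from different lattice points do not interact, and then taking a union bound over all possible profiles of the renormalized odometer along $K\Z$. Your ``$\lambda$ small $\Rightarrow$ few sleepers per block'' intuition reappears inside that argument, but it is embedded in a global estimate that controls every block simultaneously and produces the exponential decay needed to feed the multi-scale scheme---something a single fixed-scale amplification lemma cannot deliver.
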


It was also established in \cite{RS12} that $\mu_{c}(\lambda)$ in a non-decreasing function of $\lambda$. As a consequence we obtain the following corollary resolving \textbf{Conjecture 2}:
\begin{corollary}
\label{cor:mulambda}
For ${\rm ARW}(\mu,\lambda)$ on $\Z$, the critical particle density $\mu_c{(\lambda)}\to 0$ as $\lambda \to 0$.
\end{corollary}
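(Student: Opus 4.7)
The plan is to deduce the corollary directly from Theorem \ref{main} together with the monotonicity of $\mu_c(\lambda)$ established in \cite{RS12}. The content is essentially a rephrasing: Theorem \ref{main} provides activity at arbitrarily small densities provided the sleep rate is made small enough, and by the very definition of $\mu_c$ this forces the critical density to be arbitrarily small as $\lambda \to 0$.

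Concretely, I would fix $\varepsilon > 0$ and apply Theorem \ref{main} with the choice $\mu = \varepsilon$. This yields a threshold $\lambda_{\varepsilon} > 0$ such that ${\rm ARW}(\varepsilon, \lambda)$ on $\Z$ stays active almost surely for every $\lambda < \lambda_{\varepsilon}$. Invoking the defining property of $\mu_c$ recalled in the excerpt -- that ${\rm ARW}(\mu, \lambda)$ fixates almost surely whenever $\mu < \mu_c(\lambda)$ -- activity at density $\varepsilon$ is inconsistent with $\varepsilon < \mu_c(\lambda)$, so one obtains $\mu_c(\lambda) \leq \varepsilon$ for every $\lambda < \lambda_{\varepsilon}$.

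Since $\mu_c(\lambda) \geq 0$ and $\lambda \mapsto \mu_c(\lambda)$ is non-decreasing by the monotonicity result of \cite{RS12} cited immediately before the corollary, the one-sided limit $\lim_{\lambda \to 0^+} \mu_c(\lambda)$ exists; the bound above shows that this limit is at most $\varepsilon$. As $\varepsilon > 0$ was arbitrary, the limit must equal $0$, which is the claim.

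There is no substantive obstacle: all the work sits inside Theorem \ref{main}. Two points are worth flagging. First, monotonicity is not strictly necessary for the conclusion -- even without it, the argument shows $\limsup_{\lambda \to 0^+} \mu_c(\lambda) = 0$ -- but it is convenient because it promotes the statement to an honest limit. Second, one must take care that Theorem \ref{main} is formulated with $\lambda_\mu$ depending on $\mu$ and allowed to be arbitrarily small, which is precisely what permits $\varepsilon$ to be chosen arbitrarily small in the argument above.
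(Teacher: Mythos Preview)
Your proposal is correct and matches the paper's approach: the paper does not give a separate proof of the corollary at all, but simply states it as an immediate consequence of Theorem~\ref{main} together with the monotonicity of $\mu_c(\lambda)$ from \cite{RS12}, which is exactly the deduction you spell out.
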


\begin{rem}
\label{r:more}
For any $\lambda >0$, it is easy to heuristically explain why $\mu_c({\lambda})$ should be at most one. If $\mu >1$, then on average ``there are more particles than sites", and since at most one particle can fall asleep at a site, the system should not fixate.  In different settings this argument is formalized in \cite{GA10,RS12,Shellef10}, see \S~\ref{s:background} for details. However, establishing $\mu_c({\lambda})<1$ requires understanding how activity is being sustained forever due to particle interaction, i.e.\ sleepy particles being woken up by active particles over and over. This makes the analysis substantially more difficult.     
\end{rem}

\subsection{Background}
\label{s:background}
In the recent years, studies in non-equilibrium statistical mechanics have offered up a number of mathematically challenging interacting particle processes which exhibit phase transitions far away from equilibrium. A particular class of conservative models that has drawn significant attention is one where even though the mass is conserved, particles can exist in one of the two states: active and inactive. Typically an inactive particle becoming active requires interaction with one or more active ones. Paradigm examples in this class includes conserved lattice gases \cite{Vespignani00} and sandpile models with stochastic update rules \cite{Dickman2001, Manna90, Manna91}. In infinite volume, these models are believed to exhibit \emph{absorbing-state phase transition} from an active phase as some model parameter (typically particle density) is varied. In finite volume, when run with a carefully controlled driven-dissipative mechanism, these systems are believed to exhibit the phenomenon of \emph{self-organized criticality} \cite{SOC1, SOC2}, where the system is attracted to a critical state, even though it is not explicitly tuned to this critical value.   

We digress for a moment here to remark that some recent results have raised questions about the exact relationship between criticality in the  absorbing state phase transition (fixed energy model) and driven-dissipative mechanism in this general class of models. In the context of the Abelian Sandpile model, even though in dimension one, the critical densities according to the two definitions mentioned above  both equal one, in higher dimensions, for e.g. on $\Z^2,$ Fey, Levine and Wilson \cite{Fey} presented numerical evidence and rigorous results in other related settings, suggesting that the critical densities  of the fixed energy model and the driven dissipative system perhaps differ. See \cite{Fey} for precise results and  an excellent discussion on the above topic.  A recent work of Hough, Jerison and  Levine \cite{houghsandpile} prove among other things, an improved upper bound for the critical density for the fixed energy model on $\Z^2$.  See also Levine \cite{levinethreshold} for an investigation of the relationship between the two notions of criticality.    

Coming back to fixed energy sandpile models with stochastic update rules, the transitions in these models are believed to belong to a universality class, referred to as the \emph{Manna class}. Whether the Manna class exists as an autonomous universality class separate from the universality class of directed percolation (DP) seems not to have a broadly accepted answer among physicists at this point \cite{Basu12PRL, Lee13PRL}, it appears beyond the state-of-the-art techniques to obtain mathematically rigorous results on the critical or near-critical behaviour of these models (see \cite{CRS14} for some progress). Even the more basic questions about existence of phase transitions seem challenging and has only been settled in a few particular cases. One of the main challenges in studying these systems is the intricate long-range interaction caused by the conservation of particles, which makes it harder to apply some of the standard techniques in rigorous statistical mechanics.  

Activated Random Walk (ARW) is one of the well-known examples in the above general class of models. As mentioned at the beginning, a major motivation to study ARW comes from the following Stochastic Sandpile Model (SSM), a variant of Manna's model \cite{Manna90, Manna91}. In the SSM on the line, started with an initial particle configuration of product measure with density $\mu$, a site with an isolated particle instantaneously becomes inactive, whereas at any site containing $d\geq 2$ particles remain active, and at rate $1$, emits two particles using independent symmetric random walk steps, leaving $(d-2)$ particles at the site (observe the contrast to the deterministic Abelian Sandpile model, where two particles are emitted in two neighbouring directions). Compared to the deterministic sandpile model (see e.g.\ \cite{Dhar}), much less is rigorously known about SSM. It was only recently proved in \cite{RS12} that there exists $\mu_c\in [\frac{1}{4},1]$ such that the system fixates for $\mu<\mu_c$  and remains active for $\mu> \mu_c$. Numerical simulations suggest that $\mu_c \approx 0.9489$,  it remains a major mathematical challenge to prove $\mu_c<1$. It is reasonable to expect that ARW is a sufficiently good (and possibly more mathematically tractable) approximation to SSM and thus captures some of its crucial aspects. In particular as $\lambda\to \infty$, ARW corresponds exactly to the model studied in \cite{Jain}.

ARW can also be viewed as a special case of driven-diffusive epidemic processes. In this process a healthy particle does a simple random walk at rate $D_A\geq 0$ and each infected particle does a simple random walk with rate $D_{B}>0$, infecting all particles that it steps on, and recuperating at rate $\lambda$. ARW corresponds to the special case $D_A=0$. This model, in the case $D_A=D_B$ was introduced by Spitzer in late 1970s, but was rigorously studied in detail much later in \cite{KS1,KS2,KS3,KS4}. Although numerical studies have predicted different regimes of critical behaviour for $D_{A}<D_{B}$ and $D_{A}>D_{B}$, not much is rigorously known for the case $D_{A}\neq D_{B}$ (see \cite{CRS13} for the study of a related model), in particular, it is not understood whether the behaviour for $D_{A}=0$ (i.e.\ ARW) should be the same as the behaviour for $0<D_{A}<D_{B}$. 

For more background and a fuller history of ARW and related processes, the interested reader is referred to \cite{DRS10,RS12} and the references therein; see \cite{MD99} for a detailed account of non-equilibrium phase transitions in lattice models.

There have been a flurry of rigorous results on ARW in the last few years, mostly in the wake of the breakthrough paper \cite{RS12}, where, as mentioned above, the existence of an absorbing-state phase transition was established on $\Z$ for both ARW and SSM. This paper crucially uses a construction of the ARW process using the Diaconis-Fulton representation and the Abelian property (see the next section). An upper bound of one on the critical density for ARW was also established in \cite{RS12} for any $\lambda>0$. The same upper bound was also established in \cite{GA10} for ARW on unimodular graphs using mass transport principle and in \cite{Shellef10} for general bounded degree graphs using a comparison with certain internal aggregation models. The results of \cite{Shellef10} also imply that critical density is positive for ARW for any $\Z^d$ for $\lambda=\infty$. This result has recently been established for any $\lambda>0$ and any $d$ in \cite{ST14} using a multi-scale argument. As far as the critical value is concerned the only result is by \cite{CRS14} for the case $\lambda=\infty$, where it is shown that the system on $\Z^d$ fixates for all $\mu<1$, and thus in conjunction with the results of \cite{GA10,Shellef10} this establishes $\mu_c=1$.  The behaviour of ARW at criticality is also largely open, essentially the only result is for $\lambda=\infty$, in which case the system does not fixate at criticality \cite{CRS14}. For more detailed predictions on the behaviour of ARW, see \cite{DRS10}.

So far we have concerned ourselves with the model where the active particles perform a simple symmetric random walk, and that scenario will be the focus of the present work. It is also interesting to consider a situation where the random walk steps are biased, and in certain cases, such models are better understood. The study of the biased ARW started with an unpublished argument of Hoffman and Sidoravicius (see \cite[Theorem 1]{CRS14}) that considers the case of totally asymmetric walks and establishes that the critical density is $\mu_c=\frac{\lambda}{\lambda +1}$ and further that the system does not fixate at criticality. More recently ARW on $\Z^d$ with asymmetric (but not necessarily totally asymmetric) jump distribution has been studied in \cite{T14,RT15}. It is shown in \cite{T14} that in $d=1$, when the jump distribution is biased, Conjecture 2 holds and Conjecture 1 holds for small sleep rate. In a very recent paper \cite{RT15} the same has been established for $d\geq 2$ sharpening a previous result of \cite{T14} which stated that the system does not fixate for small $\lambda$ and $\mu$ sufficiently close to one. We refer the reader to the lecture notes of Leonardo Rolla \cite{R15} for more details on the previously known results on ARW.

Despite this impressive progress in the study of biased ARW, Conjecture 1 and Conjecture 2 have so far remained open for symmetric ARW (we shall always refer to the symmetric case by ARW unless otherwise mentioned), and it appears that the methods employed in these works cannot be adapted to understand Conjecture 1 and Conjecture 2 for symmetric ARW on $\Z$. This is the main contribution of the present paper.\\

The proof considers truncated versions  of $\rm{ARW}$ on a finite universe. This technique has been used in several other arguments regarding this model in the literature. The new ingredients include a novel use of  the Abelian property (see \S~\ref{fvap}) which allows us  to study a slightly different labeled variant of $\rm{ARW}$  which is stochastically dominated by the actual $\rm{ARW}$. In other words we construct a coupling of the two systems where non-fixation of the former system implies non-fixation of the latter. More details appear in \S~\ref{sop}. 

\section{Formal definitions and setup}\label{fsetup}
Recall that ARW is the following continuous time interacting particle system on $\mathbb{Z}$. Start with  particles at every site on the line by sampling i.i.d.\ from any distribution with mean $\mu$.  Each particle can be in one of the two states $A$ (active) and $S$ (sleepy). Initially all the particles are in active state. Each active particle does a continuous time nearest  neighbour symmetric random walk on $\Z$ at rate $1.$ Sleepy particles do not move. Also each active particle undergoes the transition $A \rightarrow S$ at rate $\lambda>0$ independent of everything else. Sleepy particles undergo the transition $S+A \rightarrow 2A$ instantaneously, i.e., a sleepy particle at $x\in \Z$ becomes active instantaneously when an active particle visits the site $x$. Also the transition $A \rightarrow S$ is observed only if at that time the active particle was the only particle at its site, i.e.\ the instantaneous transitions $2A\rightarrow A+S \rightarrow 2A$ is not observed.

We follow \cite{RS12} in formally describing the set up of ARW. 

Let $\rho$ be a formal symbol (which we will use to formally denote a sleepy particle). The state space $\Omega=(\N \cup \{0\} \cup \{\rho\})^{\Z}$ is the space of all bi-infinite sequence with elements of $\N \cup \{0\} \cup \rho$\footnote{It would be convenient later to put the following order on $\N \cup \{0\} \cup \{\rho\}$: $0 < \rho < 1< 2 <\ldots$}. 
For any time $t\ge 0,$ $\eta_t \in \Omega$ will denote the state of the system i.e.\  $\eta_t(x)$ denotes the number of particles at $x\in \Z$ at time $t$. $\eta_{t}(x)=\rho$ denotes that the only particle at $x$ at time $t$ is in state $S$ (is asleep). Following notation from \cite{RS12}, we formally let $|\rho|=1$ so that irrespective of the state of the particles, $|\eta_{t}(x)|$ denotes the number of particles at site $x$ at time $t$.
For notational convenience we define the following addition and multiplication operations on $\N \cup \{0\} \cup \rho$ to describe the $A+S \rightarrow 2A$ and $A \rightarrow S$ transitions: 
\begin{align*}
\rho+0 & =0+\rho=\rho,\\
\rho+n & =n+\rho=n+1,\\
\rho.1& =\rho,\\ 
\rho.n &=n \,\text{ for }n>1.
\end{align*}
Let for $x \in \Z$ and $\eta \in \Omega, $ $A(\eta(x))$ denote the number of active particles at site $x$ in configuration $\eta.$ With the above notation, formally the process evolves as follows:  for each site $x$,   we have the transitions $\eta \rightarrow \tau_{x,y} \eta$
at rate $A(\eta_t (x))\frac{1}{2}\mathbf{1}_{|y-x|=1},$ and $\eta\rightarrow \tau_{x,\rho}\eta$ 
 at rate  $\lambda A(\eta_t (x))$, where
\begin{align}\label{instru1}
\tau_{x,y}(\eta)(z)=\left\{
\begin{array}{cc}
\eta(z)+1 & z=y\\
\eta(z)-1 & z=x\\
\eta(z) & \text{otherwise.}
\end{array}
\right.
\end{align}
Similarly
\begin{align}\label{instru2}
\tau_{x,\rho}(\eta)(z)=\left\{
\begin{array}{cc}
\rho.\eta(x) & z=x\\
\eta(z) & \text{otherwise.}
\end{array}
\right.
\end{align}
Note that the transition when a sleepy particle is awakened by an active particle, is contained in the  relation $\rho+1=2.$

Let $\P^{\nu}$ denote the law of the process started from an initial configuration distributed according to $\nu.$ Throughout the rest of the article we will focus on $\nu$ being a product measure with identical coordinate projections which in particular implies ergodicity under translations. \\

Finally we need to show that such a process is well-defined even starting with infinitely many particles. This can be done using the general theory of interacting particle systems developed in \cite{Lig85}. It can also be done using an argument of  Andjel \cite{Andjel82} which shows that (under some mild finiteness condition on $\nu$) the process is well-defined and can be approximated in a suitable sense by its finite truncations. See \cite{RS12} for more details.

We say that the system has particle density $\mu$ if $\E_{\nu}(\eta_0(0))=\mu.$ Thus as mentioned before ${\rm ARW}(\mu,\lambda)$ will denote the activated random walk process with initial density $\mu$ and sleep rate $\lambda$ (note that we choose to suppress the dependence of the distribution $\nu$ and just keep track of the particle density).

\begin{definition}
\label{d:fixate}
ARW started from any configuration  is said to \textbf{locally fixate} if for every $x \in \Z,$ $\eta_{t}(x)$ is eventually constant. Otherwise we say that the system stays \textbf{active}.
\end{definition}

\subsection{Key ideas and outline of the proofs}\label{sop} To prove Theorem \ref{main}, we first approximate the infinite system by considering truncated $\rm{ARW}$ on large but finite boxes (see Lemma \ref{equi1}).  The goal is to then show using this approximation that,  the number of times a particle gets emitted from the origin can be made larger than any finite constant with probability bounded away from $0$. More specifically we run several rounds of the truncated process with growing intervals, with the origin as one of the end points. The intervals are chosen to be growing exponentially in size and it is argued that the total number of times particles gets emitted from the origin in  the $\ell^{th}$ round dominates a $\rm{Ber}(\frac{1}{4})$ random variable. Also by construction, we ensure that the rounds are independent. Thus the total number of particles emitted at the origin up to the $\ell^{th}$ round dominates a $\rm{Bin}(\ell,\frac{1}{4})$ random variable. Using Lemma \ref{equi1}, one then argues that since the activity at the origin in the finite volume is arbitrarily large, the origin does not fixate in the infinite volume system almost surely. Thus Theorem \ref{main} follows.  This argument is made precise in Lemma \ref{zerohit} in \S~\ref{pt2}.

 Proving the claim that the total number of times particles get emitted from the origin in  the $\ell^{th}$ round dominates a $\rm{Ber}(\frac{1}{4})$  random variable (the constant $\frac{1}{4}$ is nothing special and any small enough constant would work)  involves all the key ideas in this paper. We sketch the main steps below: we show that for a large enough interval with not too few particles it is exponentially unlikely (in the size of the interval) for the finite volume system to stabilize inside the interval without any particle touching the boundary (recall that the origin was chosen to be one of the end points of the interval). Thus with probability roughly $1/2,$ particles hit the origin (probability of hitting either of the boundary points is close to $1$, and both are equally likely to happen by the underlying symmetry of the process). One can refine this argument by analyzing certain martingales associated to the center of mass of the system but for our purpose, the above crude arguments would suffice. 

The technical core of this paper consists of proving the exponential upper bound on the probability of the event mentioned above. Roughly, we show the following (which is made precise in Lemma \ref{key}): Consider a large interval $[-r,r],$ with at least $\mu r$ particles (recall that $\mu$ is the particle density). We show that for $\lambda$ small enough it is exponentially unlikely (in $r$) that the process stabilizes within the interval $[-r,r]$ (without any particle hitting either boundary).  As mentioned before, the proof includes a some what non-standard use of the Abelian property which allows the reduction to a study of a labeled variant of the $\rm{ARW}$ making things technically convenient (see \S~\ref{ver2}). We argue by re-normalizing space: i.e., consider the lattice points $K\Z=\{\ldots,-2K,-K,0,K,2K,\ldots\},$ for $K\gg \frac{1}{\mu}$ and then show that at the end of the stabilization process, for $\lambda$ small enough, it is unlikely to have more than a particle asleep in any interval of length $K$. Since we start with enough  particles, this implies that some of them must escape through the boundary with large probability.

A statistic of fundamental  importance in our analysis is the `odometer function' : the number of times a particle was emitted from a site until stabilization, (see \eqref{odo2} for a formal definition). For other uses of the odometer in the study of Abelian systems, see \cite{LP09}. Using the odometer, the remainder of the proof  proceeds as follows: fix a sequence  of non-negative integers $\underline z=\{0=z_{-r},\ldots,z_{-2K},z_{-K},z_{0},z_{K}, z_{2K},\ldots z_r=0\}$. We bound the probability that the odometer function at the points $K\Z \cap [-r,r]$ is the sequence $\underline z$ after the truncated process on $[-r,r]$  has run until stabilization (this happens in finite time almost surely).  Since by choice $z_r=z_{-r}=0,$ on the the above event,  $-r,r$ do not emit particles. The key then is to ensure that the probability bound  as a function of $\underline z$ is small enough to allow us to take an union bound over all such sequences $\underline z$ and end up with a bound of the form $e^{-cr}$.  We provide a more detailed outline of the proof in \S~\ref{s:proofkey}

The above argument relies heavily on a  variant of the standard Diaconis-Fulton representation of Abelian systems (see \cite{DF91}).  One way to run the $\rm{ARW}$ dynamics, as done in \cite{RS12}, is to start with a probability space where at each site on $\Z$ one has `stacks' of left, right or sleep instructions used by particles when they are emitted, (see \eqref{stack}). The odometer function at a site is then the number of elements from the corresponding stack used until stabilization. For our purposes we introduce `renormalized' variants of the above where the stacks are only at the renormalized lattice points $K\Z$ and instead of being steps of length one, are lazy random walk paths stopped on hitting the nearest point in the set $K\Z.$ The laziness of the walk corresponds to the sleep instructions. See \eqref{stack2} for precise definitions.

\subsection{Organization of the paper}\label{op} In \S~\ref{fvap} we collect all the basic preliminaries about the truncated (finite universe) ARW and the Abelian property. In \S~\ref{pt2}, we show how to complete the proof of Theorem \ref{main} assuming the statement of the key Lemma \ref{key}. In \S~\ref{ver2} we develop a labeled variant of the ARW dynamics and reduce Lemma \ref{key} to Lemma \ref{key1} about the Labeled ARW dynamics. \S~\ref{s:prelim} develops a few technical preliminaries needed for the proof of Lemma \ref{key1}, which is completed in \S~\ref{s:proofkey} using a careful union bound over the possible odometer values. We finish with some open questions in \S~\ref{s:question}.

\section{Finite volume dynamics and Abelian property}\label{fvap}
For our purposes we now define a truncated version of the ${\rm ARW}(\mu,\lambda)$ dynamics. Recall that the initial configuration $\eta_0$ is distributed according to some product measure $\nu$ with particle density $\mu$.  Let $\nu_{M}$ be the restriction of $\nu$ on the finite box $[-M,M]$ and $\P^{M}:=\P^{\nu_M}$ be the corresponding law of the process. The next result shows that to prove Theorem \ref{main} it suffices to consider the measures $\P^{M}$. 

To see this consider the following modification of the ARW dynamics. Given a configuration $\eta$ on $\Z$, and $M\in \N$, let $\widehat{\eta}_M$ denote the restriction of $\eta$ on $[-M,M]$. With the initial configuration $\widehat{\eta}_M$, consider ARW process with the restriction that particles that move out of $[-M,M]$ are deleted from the system (alternatively they fall asleep immediately irrespective of everything else). Since initially there are (almost surely) finitely many particles in the system, and since any particle not falling asleep will eventually exit $[-M,M]$ it is easy to argue that this process will stabilize almost surely.

The object of interest at this point is $u_M(0)$, the number of transitions happening at the origin, (formal definitions appear in the next subsection). One now considers a coupling of all the truncated ARWs on one common probability space. We denote the underlying measure by $\mathscr{P}$ (formal definition appears in \S~\ref{s:ver1}). An important consequence of the so called Abelian property is that  under the above coupling, it follows  that the sequence  $u_{M}(0)$ is nondecreasing in $M$.
Let 
\begin{equation}\label{monlim}
u(0):=\lim_{M \to \infty}u_M(0).
\end{equation} 
  
We then have the following lemma.

\begin{lemma}\cite[Lemma 4]{RS12}
\label{equi1} 
Let $\nu$ be a translation invariant ergodic measure with finite particle density $\E_{\nu}(\eta(0))< \infty.$  Then $$\P^{\nu}(\textit{the system locally fixates})=\mathscr{P}(u(0)< \infty) \in \{0,1\}.$$ 
\end{lemma}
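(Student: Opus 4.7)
The plan is to exploit the Diaconis--Fulton graphical representation used throughout \cite{RS12}. Under $\mathscr{P}$, to each site $x\in\Z$ one attaches an i.i.d.\ stack of instructions (left jump, right jump, or attempted sleep, in the proportions dictated by $\lambda$), together with the initial configuration sampled from $\nu$, all mutually independent. For each $M$, the truncated dynamics on $[-M,M]$ with absorption at the boundary is realized by popping instructions from the stacks of currently active sites in some admissible order until every particle in $[-M,M]$ is asleep; the odometer $u_M(0)$ is the total number of instructions popped from the stack at the origin during this (a.s.\ finite) stabilization. The Abelian property guarantees that $u_M(0)$ is a deterministic function of the stacks and the initial configuration, independent of the order of execution. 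A routine monotonicity argument, using only that enlarging the box cannot suppress any transition that was performed in the smaller box, shows that $u_M(0)$ is non-decreasing in $M$, so that the limit $u(0)$ in \eqref{monlim} is well defined, and moreover coincides with the number of transitions at the origin in the (well-defined, by \cite{Andjel82}) infinite-volume dynamics.

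From this identification, local fixation at the origin in the infinite-volume process is exactly the event $\{u(0)<\infty\}$. More generally, the analogous odometer $u(x)$ at each site $x$ satisfies $u(x)<\infty$ if and only if the infinite process fixates at $x$. Since $\nu$ is translation invariant and the instruction stacks are i.i.d., $\mathscr{P}$ is translation invariant, so $\mathscr{P}(u(x)<\infty)$ does not depend on $x$. Therefore, if $\mathscr{P}(u(0)<\infty)=1$, then by countable intersection $u(x)<\infty$ for every $x\in\Z$ simultaneously almost surely, and conversely if $\P^{\nu}(\text{local fixation})=1$ then in particular $u(0)<\infty$ almost surely. This yields the equality $\P^{\nu}(\text{local fixation})=\mathscr{P}(u(0)<\infty)$.

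It remains to establish the $\{0,1\}$ dichotomy. The ergodicity of $\nu$ combined with the product structure of the instruction stacks makes $\mathscr{P}$ ergodic under the $\Z$-shift. The event ``the system does not locally fixate'' equals $\{\exists\, x\in\Z:u(x)=\infty\}$, which is manifestly shift invariant, so by ergodicity it has probability $0$ or $1$. To connect this back to $u(0)$, one argues that $\{u(0)=\infty\}$ and $\{\exists\, x:u(x)=\infty\}$ agree up to null sets: if infinitely many instructions are executed at some site $y$, then infinitely many particles are emitted from $y$ as symmetric random walk steps, and since any sleeping particle becomes active when an active particle visits it, the one-dimensional recurrence of the walk forces infinitely many transitions at the origin as well. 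Together with the translation-invariance argument above this yields $\mathscr{P}(u(0)<\infty)\in\{0,1\}$.

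The main obstacle is the identification, in the first paragraph, of $\lim_M u_M(0)$ with the odometer at $0$ of the genuine infinite-volume process; this requires Andjel-style control on possible instructions propagating in from infinity under only the first-moment assumption $\E_{\nu}\eta(0)<\infty$, and is imported from \cite{Andjel82,RS12}. A secondary subtlety is the propagation step in the last paragraph: one must ensure that activity at a far-away site $y$ actually reaches the origin rather than being swallowed by sleeping particles in between. The Abelian property lets one again argue at the level of odometers: by recurrence, a positive fraction of the instructions at $y$ must be ``felt'' at $0$, so finiteness of $u(0)$ forces finiteness of $u(y)$ for every fixed $y$, hence, by translation invariance, of $u(x)$ at every site almost surely.
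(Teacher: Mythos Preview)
The paper does not actually prove this lemma: immediately after stating it, the authors write ``We refer to \cite{RS12v2}, which is an older arXiv version of \cite{RS12}, for a proof of the above Lemma. As already mentioned before the $0$--$1$ law is a direct consequence of ergodicity.'' So there is no in-paper proof to compare against beyond that one-line remark. Your sketch is broadly the argument one finds in \cite{RS12}, and you correctly isolate the only genuinely delicate step---identifying $\lim_M u_M(0)$ with the infinite-volume odometer---as something to be imported.

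Two points where the write-up is loose. First, your second paragraph purports to establish the \emph{equality} $\P^{\nu}(\text{local fixation})=\mathscr{P}(u(0)<\infty)$, but what you actually prove is only that one equals $1$ iff the other does; that yields equality only once the $0$--$1$ law is already in hand. The clean ordering is: local fixation is the shift-invariant event $\{u(x)<\infty\ \forall x\}$, which has probability $0$ or $1$ by ergodicity of $\mathscr{P}$; then show this event coincides a.s.\ with $\{u(0)<\infty\}$.

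Second, the propagation step in your last paragraph invokes ``one-dimensional recurrence of the walk'', which is not the operative mechanism in the Diaconis--Fulton picture (topplings are site-based, not particle-based, so there is no autonomous random walk whose recurrence you can cite). The correct argument is nearest-neighbour mass balance: if $u(y)=\infty$, then a.s.\ in the stack at $y$ infinitely many instructions are left jumps, so infinitely many particles arrive at $y-1$; since at most one particle can be asleep there, $u(y-1)=\infty$, and by induction $u(0)=\infty$. This is what the paper alludes to in the parenthetical ``One can easily see that almost surely $u(\cdot)$ is finite everywhere or infinite everywhere''.
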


We refer to \cite{RS12v2}, which is an older arXiv version of \cite{RS12}, for a proof of the above Lemma. As already mentioned before the $0-1$ law is a direct consequence of ergodicity.

 In the next subsection we formally define the coupling of the truncated processes mentioned above and precisely state the monotonicity property used to define $u(0).$

\subsection{Coupling of truncated ARW's}
\label{s:ver1}
To construct the coupling of truncated ARWs we shall take resort to a Diaconis-Fulton representation \cite{DF91,Eriksson96} of this process where the process is implemented through a sequence of instructions attached to the sites. The advantage of this representation is the  Abelian property, which allows one to disregard the order in which different steps were performed in certain settings. The relevant elements of the Diaconis-Fulton representation in our context was developed in \cite[Section 3]{RS12}, and we closely follow their treatment for the rest of this and the next subsection. We start by introducing a series of notations. Recall the transitions $\tau_{x,y}$ and $\tau_{x,\rho} $ from \eqref{instru1} and \eqref{instru2}. Now consider the following array of random variables:
\begin{equation}\label{stack}
\mathscr{I}=
\begin{array}{ccccccc}
\ldots&\xi_{(-2,1)}&\xi_{(-1,1)}&\xi_{(0,1)}&\xi_{(1,1)}&\xi_{(2,1)}&\ldots\\
\ldots&\xi_{(-2,2)}&\xi_{(-1,2)}&\xi_{(0,2)}&\xi_{(1,2)}&\xi_{(2,2)}&\ldots\\
\vdots&\vdots&\vdots&\vdots&\vdots&\vdots&\vdots
\end{array}
\end{equation}
where $\xi_{(x,j)}$ are independent for any $x \in \Z$ and $j\in \N$ and moreover, 
\begin{equation}\label{law1}
\xi_{(x,j)}=\left\{ \begin{array}{cc}
\tau_{x,x-1} & \text{with probability~} \frac{1}{2(\lambda+1)}\\
\tau_{x,x+1} & \text{with probability~} \frac{1}{2(\lambda+1)}\\
\tau_{x,\rho} & \text{with probability~} \frac{\lambda}{\lambda+1}.
\end{array}
\right.
\end{equation} 
 
We would call the $\xi_{(x,j)}$'s as instructions at the site $x$ and denote the underlying product measure by $\mathscr{P}$. Using these instructions one can define a discrete time version of the ARW process in the following way:
We start by defining the notion of stability of a site $x$ with respect to a configuration $\eta \in \Omega.$

\begin{definition}
\label{stable1} 
For any $\eta \in \Omega$ we say a site $x\in \Z$ is \textbf{stable} if $\eta(x)=0$ or $\rho,$ and otherwise we call it \textbf{unstable}.
\end{definition}

Thus given a configuration $\eta \in \Omega,$ at each discrete time step $t$,  one can choose an unstable site $x$ and use the first unused element from the stack $\xi_{(x,\cdot)}$ and use it to perform the transition to a configuration $\eta'$ at time step $(t+1)$. We call such an operation ``toppling" at site $x$.
Formally we keep track of the number of topplings at every site  as a function of time $t$.
Let 
\begin{equation}\label{odo12}
h:=(h(x): x \in \mathbb{Z})
\end{equation}
where $h(x)$ denote the number of topplings at $x$,  which we will call the odometer function at $x$. $h_{t}(\cdot)$ will be used to denote the odometer function at time $t.$  For later purposes it will be convenient to keep track of both the odometer function $h_{t}$ and the configuration $\eta_t.$ To this end define the toppling operation at $x$ acting on the pair $(\eta,h)$ by $$\Phi_x(\eta,h)= (\xi_{(x,h(x)+1)}(\eta),h+\delta_x),$$ i.e.\ we use $\xi_{(x,h_{x}+1)}$, the first unused element of the stack at $x$ to topple, and $h$ increases by $1$ at $x$.
\begin{definition}
\label{d:legal}
We say that $\Phi_x$ is \textbf{legal} if $x$ is unstable i.e.\ $\eta(x)\ge 1$. For any sequence ${\bf{\alpha}}=(x_1,x_2,\ldots, x_k)$ we define the sequence of topplings at $x_1,$ followed by $x_2$ and so on through until $x_k$ by $\Phi_{\bf{\alpha}},$ i.e.\ $\Phi_{\bf{\alpha}}=\Phi_{x_k}\ldots \Phi_{x_1}$. Also for any configuration $\eta$ we will use $\Phi_{\bf{\alpha}}(\eta)$ to denote the configuration obtained by the action $\Phi_{\bf{\alpha}}$ on $\eta.$ We  say that ${\bf{\alpha}}$ is a {\bf legal sequence} if $\Phi_{x_i}$ is legal for the configuration  $\Phi_{i-1}\ldots \Phi_1(\eta)$ for all $i=1,\ldots k.$
\end{definition}

We abuse notation a little to denote by $h_{\alpha}$ the odometer function after performing the sequence of toppling given by $\alpha,$ i.e.\ for any $x \in \Z,$ 
\begin{equation}\label{odo2}
h_{\alpha}(x)=\sum_{i=1}^{k}\mathbf{1}(\alpha_i=x).
\end{equation} 
We also define the natural ordering on $\Omega$ and the space of odometer functions: for $\eta ,\tilde \eta \in \Omega,$ we say that $\eta\ge \tilde \eta,$ if $\eta(x)\ge \tilde \eta(x)$ (according to the order specified before on $\N \cup \{0\}\cup \{\rho\}$) for all $x \in \mathbb{Z}.$ Similarly for odometer functions $h,\tilde h$, we say $ h \ge \tilde h $ if $h(x)\ge \tilde h(x)$ for all $x \in \mathbb{Z}.$ We also write $(\eta,h)\ge (\tilde \eta,\tilde h)$ if $\eta \ge \tilde \eta$ and $h=\tilde h.$

The most basic property of the above process is the Abelian property which says that given two sequence of legal topplings which result in the same odometer function (see \eqref{odo2}), the final configuration is the same in both the cases i.e.\ the order in which topplings are performed does not matter. 
\begin{lemma}(Abelian property)
\label{abp} 
Given any two legal sequence of topplings $\alpha$ and $\alpha'$ such that $h_{\alpha}=h_{\alpha'}$, then $$\Phi_{\alpha}(\eta)=\Phi_{\alpha'}(\eta).$$
\end{lemma}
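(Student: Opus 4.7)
The plan is to prove the Abelian property by combining a local commutation lemma for topplings at distinct sites with an inductive reduction based on adjacent swaps.

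First, I would establish the following \textbf{local commutation lemma}: if $x \neq y$ and both $\Phi_x$ and $\Phi_y$ are legal at some pair $(\eta, h)$, then $\Phi_y$ is legal at $\Phi_x(\eta, h)$, $\Phi_x$ is legal at $\Phi_y(\eta, h)$, and $\Phi_x \Phi_y(\eta, h) = \Phi_y \Phi_x(\eta, h)$. The two orderings invoke the same pair of instructions $\xi_{(x, h(x)+1)}$ and $\xi_{(y, h(y)+1)}$, since a toppling at one site does not alter the odometer at the other. Preservation of legality after the first step is immediate: $\Phi_x$ can strictly decrease the count only at $x$ (never at $y$), and symmetrically for $\Phi_y$. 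Equality of the final configurations reduces to a finite case check on the instruction types (move/sleep) at $x$ and $y$; the only nontrivial cases arise when $|x-y|=1$, where one directly verifies that the net effect on each affected site coincides in both orderings, using the $\rho$-arithmetic of \eqref{instru2}.

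Granting this, the main argument proceeds by induction on the common length $k := |\alpha| = |\alpha'|$ (equality forced by $\sum_{x} h_\alpha(x) = \sum_{x} h_{\alpha'}(x)$). The case $k = 0$ is trivial. For the inductive step, set $z := \alpha_1$; then $\eta(z) \geq 1$ by legality of $\alpha$, and $h_{\alpha'}(z) = h_\alpha(z) \geq 1$ forces $z$ to appear in $\alpha'$. Let $j$ be the smallest index with $\alpha'_j = z$. The plan is to move $\alpha'_j$ to the front of $\alpha'$ through $j-1$ successive adjacent swaps. By minimality of $j$, $\alpha'_i \neq z$ for every $i < j$, so none of the topplings at positions $1, \ldots, j-1$ occur at $z$, and they can only leave the count at $z$ unchanged or increase it via instructions of the form $\tau_{z \pm 1, z}$. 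Hence the count at $z$ remains $\geq \eta(z) \geq 1$ throughout, which makes $\Phi_z$ legal at every intermediate state; symmetrically, inserting $\Phi_z$ before any $\Phi_{\alpha'_i}$ does not decrease the count at $\alpha'_i \neq z$, so $\Phi_{\alpha'_i}$ retains its legality. The local commutation lemma then justifies each swap while preserving the final configuration.

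After the swaps, $\alpha'$ has been transformed into a legal sequence $\alpha'' = (z, \alpha'_1, \ldots, \alpha'_{j-1}, \alpha'_{j+1}, \ldots, \alpha'_k)$ with $\Phi_{\alpha''}(\eta) = \Phi_{\alpha'}(\eta)$. Both $\alpha$ and $\alpha''$ begin with $\Phi_z$, so their tails are legal sequences from the common state $\Phi_z(\eta)$, of length $k-1$, and with the same odometer $h_\alpha - \delta_z$. The inductive hypothesis yields $\Phi_\alpha(\eta) = \Phi_{\alpha''}(\eta) = \Phi_{\alpha'}(\eta)$, completing the induction. The main obstacle in this plan is the local commutation lemma in the sleep-move, move-sleep, and sleep-sleep cases at adjacent sites, where the $\rho$-arithmetic is nonlinear and forces a case-by-case computation; however, each case is settled by a short explicit check, so the obstacle is routine rather than genuinely difficult.
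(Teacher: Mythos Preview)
Your proof is correct and follows essentially the same approach as the paper: both argue by induction on the common length, bring the first toppling of one sequence to the front of the other via commutation of topplings at distinct sites (which the paper states just before its proof and you phrase as your local commutation lemma), and then invoke the inductive hypothesis on the tails. The only cosmetic differences are that you swap the roles of $\alpha$ and $\alpha'$ and spell out the legality preservation and the adjacent-swap mechanics in more detail than the paper does.
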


To see why the above is true notice that for each site $x,y\in \Z$  with $x \neq y$ any pair of legal topplings, one at $x$ and the other at $y$ commutes. The sequence of topplings at a single site $x$ do not commute however since the compositions of  $\tau_{x,\rho}$ and $\tau_{x,x+1}$ clearly depends on the order of composition. However given the stacks $\{\xi_{(x,j)}\}$ (see \eqref{stack}) and the function $h_{\alpha}(\cdot)$, one knows in which order the topplings at a single site $x$ occurs.

Before providing a formal proof of the above, we now list some basic facts about the toppling operation in the following lemma.
\begin{lemma}
\label{o:toppling}
The toppling operation as described above has the following properties:
\begin{enumerate}
\item[i.] If $\alpha$ is a legal sequence for $\eta$, then $\Phi_{\alpha}(\eta)$ depends on $\alpha$ only through $h_{\alpha}(\cdot)$. This is immediate from Lemma \ref{abp}.
\item[ii.] $\Phi_{\alpha}(\eta)$ is non-increasing in $h_{\alpha} (x)$ and non-decreasing in $h_{\alpha} (z)$, $z \neq x$. This is easy by observing that a toppling at $x$ can only decrease the number of particles at $x$, whereas a toppling at another site can only increase it.
\item[iii.] If $x$ is unstable in $\eta$ and $\eta_0(x) >\eta(x)$, then $x$ is unstable in $\eta_0$.
\item[iv.] Moreover if $\eta_0 >\eta,$ then $\Phi_{\alpha}(\eta_0) >\Phi_{\alpha}(\eta)$.
\end{enumerate}
Items iii. and iv. above are obvious.
\end{lemma}

We finish this subsection by providing a proof of Lemma \ref{abp}. We note that this is fairly standard, but we however include the short proof for the sake of being self-contained. The same is true for the standard and easy consequences of Abelian property described later: Lemma \ref{abp1} and Lemma \ref{mono1}. We do omit the proof of Lemma \ref{lap1}. For more discussions and details of these arguments in a more general setting, the interested reader is refereed to \cite{bond, RS12}. and the references therein. 
 
\textit{Proof of Lemma \ref{abp}.}   
Let $|\alpha|$ denote the length of the sequence $\alpha.$ Since $h_{\alpha}=h_{\alpha'}$ clearly $|\alpha|=|\alpha'|.$  The proof now follows by induction on $|\alpha|$. Clearly if $|\alpha|=1$, there is nothing to prove. 
Otherwise let $\alpha=(x_1,x_2,\ldots,x_{k})$ and $\alpha'=(y_1,y_2,\ldots y_k).$ Now if $x_1=y_1$ we are done by induction, considering the sequences $(x_2,\ldots x_k)$ and $(y_2,\ldots y_k)$ with initial configuration $\Phi_{x_1}(\eta)=\Phi_{y_1}(\eta).$ If $x_1 \neq y_1$ notice that since $h_{\alpha'}(y_1)\ge 1$ and $h_{\alpha}=h_{\alpha'}$, there exists a $j\le k$ such that $x_j=y_1.$ Let $j_0$ be the minimum such $j$. Now consider the sequence $\alpha_{j_0}=(x_{j_0},x_1,x_2\ldots x_{j_0-1},x_{j_0+1},\ldots,x_k).$ We now notice that $\Phi_{\alpha}(\eta)=\Phi_{\alpha_{j_0}}(\eta)$. To see this first observe that  $\alpha_{j_0}$ is a legal sequence. This is because initially the toppling $\Phi_{x_{j_0}}$ is legal since  $\alpha'$ is legal. Also since $x_{j_0}$ does not occur in $(x_1,\ldots x_{j_0-1})$ by the discussion before the proof, $\Phi_{x_{j_0}}$ commutes with $\Phi_{x_{i}}$ $1\le i \le j_0-1$. Thus $\Phi_{\alpha}(\eta)=\Phi_{\alpha_{j_0}}(\eta)$. Now considering $\alpha_{j_0}$ and $\alpha'$ we are done by the previous case since both the sequences have the same first element. Thus $$\Phi_{\alpha}(\eta)=\Phi_{\alpha_{j_0}}(\eta)=\Phi_{\alpha'}(\eta).$$
\qed

\subsection{Consequences of the Abelian property}\label{finite12}
As mentioned before, for our purposes we will be often interested in finite ARW dynamics restricted to an interval. We start with the following definition.  
\begin{definition}\label{resdef}
Let $V$ be a finite subset of $\Z$. A configuration $\eta$ is said to be stable
in $V$ if all the sites $x \in V$ are stable. We say that a sequence of topplings $\alpha$ is contained in $V$ if all its elements
are in $V$, and we say that $\alpha$ stabilizes $\eta$ in $V$ if every $x \in V$ is stable in $\Phi_{\alpha}(\eta)$.
\end{definition}

\begin{lemma}(Least Action Principle, \cite[Lemma 1]{RS12})\label{lap1}
Given a set $V$, if $\alpha, \beta$ are two legal sequences of topplings such that $\beta$ is contained in $V$ and $\alpha$ stabilizes $\eta$ in $V$ then $h_{\beta}\le h_{\alpha},$ i.e.\ all the topplings in $\beta$ are also needed in $\alpha.$ 
\end{lemma}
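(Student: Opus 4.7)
The plan is to prove Lemma \ref{lap1} by induction on $|\beta|$, with the Abelian property (Lemma \ref{abp}) as the central tool. The base case $|\beta|=0$ is immediate. For the inductive step, write $\beta = \beta' \cdot (y_m)$; the inductive hypothesis applied to $\beta'$ gives $h_{\beta'} \le h_\alpha$. Since $h_\beta$ and $h_{\beta'}$ agree away from $y_m$ and $h_\beta(y_m) = h_{\beta'}(y_m) + 1$, it only remains to show $h_{\beta'}(y_m) < h_\alpha(y_m)$, which I would argue by contradiction, assuming $h_{\beta'}(y_m) = h_\alpha(y_m)$.

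As an intermediate step I would establish the following reordering lemma: if $\sigma_1, \sigma_2$ are legal sequences from $\eta$ contained in $V$ with $h_{\sigma_1} \le h_{\sigma_2}$, then there exists a legal sequence $\sigma_3$ from $\Phi_{\sigma_1}(\eta)$ with $h_{\sigma_3} = h_{\sigma_2} - h_{\sigma_1}$; by the Abelian property, $\Phi_{\sigma_1 \cdot \sigma_3}(\eta) = \Phi_{\sigma_2}(\eta)$. This is proved by a secondary induction on $|\sigma_1|$: promote the first occurrence in $\sigma_2$ of the initial element $x_1$ of $\sigma_1$ to the very front of $\sigma_2$, and observe that advancing a single toppling at $x_1$ only increases particle counts at $x_1 \pm 1$, which preserves instability (and hence legality of subsequent topplings) at those sites.

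Applying this with $\sigma_1 = \beta'$ and $\sigma_2 = \alpha$ produces a legal sequence $\alpha_3$ from $\eta' := \Phi_{\beta'}(\eta)$ with $h_{\alpha_3} = h_\alpha - h_{\beta'}$; the contradiction assumption forces $h_{\alpha_3}(y_m) = 0$, so $\alpha_3$ never topples $y_m$. Since $\beta = \beta' \cdot (y_m)$ is legal, $y_m$ carries $n \ge 1$ active particles in $\eta'$. Along $\alpha_3$, the state at $y_m$ can change only through arrivals of walk instructions from $y_m \pm 1$, each of which adds a particle while keeping $y_m$ active and never produces the symbol $\rho$. Consequently $\Phi_{\alpha_3}(\eta')(y_m)$ is active with count at least one, hence unstable. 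But by the Abelian property, $\Phi_{\alpha_3}(\eta') = \Phi_{\beta' \cdot \alpha_3}(\eta) = \Phi_\alpha(\eta)$, whose value at $y_m$ lies in $\{0, \rho\}$ since $\alpha$ stabilizes $\eta$ in $V$; this is the desired contradiction.

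The main obstacle is the reordering lemma. The delicate point is that the sleep symbol $\rho$ interacts non-trivially with addition of particles, so one must verify that promoting a single toppling to an earlier position does not invalidate the legality of any later toppling. This is handled by noting that the reordering only increases particle counts at the two neighbours of the promoted site and never converts an active state into the sleepy state $\rho$, so every site that was unstable at the time of a subsequent toppling remains unstable in the reordered sequence.
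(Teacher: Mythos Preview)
Your argument is correct, but it takes a longer route than the paper's. Both proofs induct on $|\beta|$; the difference is that you peel off the \emph{last} element $y_m$ of $\beta$, whereas the paper peels off the \emph{first} element $y_1$. Peeling from the front is considerably cleaner: since $\beta$ is legal, $y_1$ is unstable in $\eta$, and since topplings at other sites can only add particles to $y_1$, the stabilizing sequence $\alpha$ must contain $y_1$. One then promotes the first occurrence of $y_1$ in $\alpha$ to the front (exactly the commutation step from the proof of Lemma~\ref{abp}) and applies the inductive hypothesis directly to the tails of $\beta$ and the reordered $\alpha$, both viewed from $\Phi_{y_1}(\eta)$. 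This avoids your separate reordering lemma and the subsequent contradiction argument entirely. Your reordering lemma is of course true (and its proof is just the paper's single promotion step iterated $|\sigma_1|$ times), so nothing is wrong; it simply packages as an auxiliary result what the paper's induction handles in one move. One small remark: your reordering lemma does not actually need the hypothesis that $\sigma_2$ is contained in $V$, which is good, since $\alpha$ in Lemma~\ref{lap1} is not assumed to be contained in $V$.
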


As a simple corollary we obtain the following.
\begin{lemma}(\cite[Lemma 2]{RS12})
\label{abp1} 
If $\alpha$ and $\beta$ are both legal toppling sequences for $\eta$ that are contained in $V$ and stabilize $\eta$ in $V$, then $h_{\alpha} = h_{\beta}$ . In particular, $\Phi_{\alpha}(\eta) =\Phi_{\beta}(\eta)$.
\end{lemma}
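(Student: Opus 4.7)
The plan is to deduce this lemma as an immediate corollary of the Least Action Principle (Lemma \ref{lap1}) together with the Abelian property (Lemma \ref{abp}). The key observation is that the hypotheses on $\alpha$ and $\beta$ are completely symmetric: each is a legal toppling sequence for $\eta$, each is contained in $V$, and each stabilizes $\eta$ in $V$. This symmetry should let us apply Lemma \ref{lap1} in both directions.

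First, I would apply Lemma \ref{lap1} with $\alpha$ playing the role of ``the sequence that stabilizes $\eta$ in $V$'' and $\beta$ playing the role of ``the sequence contained in $V$''. The hypotheses of Lemma \ref{lap1} are then exactly what we assumed, so the conclusion gives $h_\beta \le h_\alpha$ pointwise on $\Z$. Next, I would swap the roles and apply Lemma \ref{lap1} again, with $\beta$ as the stabilizing sequence and $\alpha$ as the sequence contained in $V$; this is legitimate because by hypothesis $\alpha$ is also legal, contained in $V$, and stabilizes $\eta$ in $V$. This yields $h_\alpha \le h_\beta$. Combining the two inequalities gives $h_\alpha = h_\beta$.

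Finally, once the odometer functions agree, I would invoke the Abelian property (Lemma \ref{abp}), which says precisely that two legal toppling sequences giving rise to the same odometer produce the same final configuration, yielding $\Phi_\alpha(\eta) = \Phi_\beta(\eta)$.

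There is no substantive obstacle here — the lemma is essentially a bookkeeping consequence of the two prior results, and the only ``content'' is recognizing that the symmetric hypotheses on $\alpha$ and $\beta$ allow Lemma \ref{lap1} to be invoked in both directions. The proof will be only a few lines long.
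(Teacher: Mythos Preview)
Your proposal is correct and matches the paper's own proof essentially line for line: apply Lemma~\ref{lap1} in both directions to get $h_\alpha = h_\beta$, then invoke Lemma~\ref{abp} for the equality of final configurations. The paper's proof is simply a terser rendering of exactly this argument.
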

\begin{proof} By Lemma \ref{lap1}, $h_{\beta}\le h_{\alpha}$ as well as $h_{\alpha}\le h_{\beta}$ and hence we are done.
\end{proof}

We now state a simple but crucial monotonicity result that allows us define $u(0)$ in \eqref{monlim}.
First we define formally the odometer function appearing in \eqref{monlim}.
\begin{definition}(Truncated odometer) Given $\eta \in \Omega,$  a set of vertices  $V$, and any legal stabilizing sequence $\alpha$ as in Lemma \ref{abp1} define the truncated odometer, 
\begin{equation}\label{finalodo1}
u_{V}(\cdot):=u_{\eta,V}(\cdot):=h_{\alpha}(\cdot).
\end{equation}
\end{definition}
 
Note that by Lemma \ref{abp1}, $u_{V}(\cdot)$ is a well defined quantity. We also define the final configuration 
\begin{equation}\label{finalconf1}
\eta_{\infty,V}:=\Phi_{\alpha}(\eta)
\end{equation}
 which is also well defined by the above lemma.
For our purposes we will only consider finite boxes i.e.\ $V=[-M,M],$ for some positive integer $M$. For brevity we define 
\begin{equation}\label{trun1}
u_M:=u_{[-M,M]}.
\end{equation}

\begin{lemma}(Monotonicity, \cite[Lemma 3]{RS12})
\label{mono1} 
For vertex sets $V \subset V'$ and configurations $\eta\le \eta'$, 
$$u_{\eta,V}\le u_{\eta',V'}.$$ 
\end{lemma}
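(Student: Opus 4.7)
The plan is to combine the Least Action Principle (Lemma \ref{lap1}) with the monotonicity properties recorded in items (iii) and (iv) of Observation \ref{o:toppling}. The key observation is that any legal stabilizing sequence for the pair $(\eta, V)$ remains a legal (though not necessarily stabilizing) sequence when applied to the larger configuration $\eta'$ in the larger ambient set $V'$, after which Lemma \ref{lap1} delivers the inequality essentially for free.

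Concretely, first I would fix a legal stabilizing sequence $\alpha = (x_1,\ldots,x_k)$ for $\eta$ in $V$, so that by the definition \eqref{finalodo1} (which is well-defined thanks to Lemma \ref{abp1}) we have $u_{\eta,V}=h_\alpha$. The first substantive step is to verify that $\alpha$, viewed as a sequence of topplings starting from $\eta'$ rather than $\eta$, is still legal. Let $\eta_i$ and $\eta'_i$ denote the configurations obtained from $\eta$ and $\eta'$ respectively after applying the first $i$ topplings prescribed by $\alpha$, using the same underlying stacks $\mathscr{I}$. Proceeding by induction on $i$, the base case $\eta'_0=\eta'\ge\eta=\eta_0$ is the hypothesis, and the inductive step is precisely Observation \ref{o:toppling}(iv) applied to the single toppling $\Phi_{x_{i+1}}$. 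Since $\alpha$ is legal for $\eta$, the site $x_{i+1}$ is unstable in $\eta_i$, and Observation \ref{o:toppling}(iii) together with $\eta'_i \ge \eta_i$ then shows $x_{i+1}$ is unstable in $\eta'_i$ as well. Therefore $\alpha$ is a legal sequence when started from $\eta'$.

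Next, since $V \subset V'$, the sequence $\alpha$ is contained in $V'$; combined with the legality for $\eta'$ established above, I can invoke the Least Action Principle (Lemma \ref{lap1}) in the ambient set $V'$, taking $\alpha$ as the contained sequence and any legal stabilizing sequence $\beta$ for $\eta'$ in $V'$ as the stabilizing one. This yields $h_\alpha \le h_\beta$ pointwise. By Lemma \ref{abp1} we have $h_\beta = u_{\eta',V'}$, while by construction $h_\alpha = u_{\eta,V}$, so $u_{\eta,V}\le u_{\eta',V'}$, which is exactly the claim.

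The argument is essentially bookkeeping, and the only mildly delicate point is the inductive domination step involving configurations that contain the formal sleepy symbol $\rho$; this subtlety, however, has already been absorbed into Observation \ref{o:toppling}, so it can be invoked as a black box without reopening the underlying case analysis of the three types of instructions in \eqref{law1}.
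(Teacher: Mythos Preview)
Your argument is correct and is essentially identical to the paper's proof, which also fixes legal stabilizing sequences $\alpha$ and $\beta$ for the two systems, invokes Observation \ref{o:toppling}(iii)--(iv) to see that $\alpha$ remains legal for $\eta'$, and then concludes via Lemma \ref{lap1}. The only difference is that you spell out the induction behind the legality step explicitly, whereas the paper does it in a single sentence.
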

\begin{proof}Let $\alpha$ and $\beta$ be legal stabilizing sequences for the two systems. By iii. and iv. in Lemma \ref{o:toppling} we see that $\alpha$ is also a legal sequence for $\eta'$.  Thus we are done by Lemma \ref{lap1}. 
\end{proof}

Using Lemma \ref{mono1} one can define $u_{\eta,\infty}=\lim_{M\to \infty}u_M.$ Note also that Lemma \ref{mono1} implies that $u_{\eta,\infty}$ does not depend on the increasing sets $[-M,M],$ and any sequence of sets increasing to $\Z$ would have the same limit. 
When the underlying configuration is clear from context we will denote $u_{\eta,\infty}(\cdot)$ just by $u(\cdot)$ as stated in \eqref{monlim}.
A configuration $\eta$ is said to be stabilizable if $u(x)< \infty$ for all $x \in \mathbb{Z}.$ (One can easily see that almost surely $u(\cdot)$ is finite everywhere or infinite everywhere).\\

We now finish off the section with the statement of some technical lemmas and deduce the proof of Theorem \ref{main} from them. 
The rest of this article will be devoted to the proof of the lemmas.
It would also be convenient for us to adopt the standard notational convention throughout the paper of using letters $c,C,$ etc.\ to denote constants whose values are not crucial for the arguments and their values may change from line to line, sometimes within the same proof. 
\subsection{Proof of Theorem \ref{main}}\label{pt2}
Before proceeding further with technical arguments, we now discuss the key steps in the proof of Theorem \ref{main}. By Lemma \ref{equi1} it suffices to show the following: for any $\mu>0$ for small enough sleep rate, almost surely, the sequence $\{u_M(0)\}_{M\in \N}$ (see \eqref{trun1}) takes arbitrarily large values.  To show this we look at the following sequence of intervals of exponentially growing size.
\begin{align*}
V_1&=[0,2^{5}],\\
V_2 &= [0,2^8],\\
\vdots &=\vdots\\
V_{\ell}&=[0,2^{3\ell+2}].
\end{align*}

Recall $\eta$ is the initial configuration on $\Z$ distributed as the product measure $\nu$. Let $\eta^{(\ell)}$ denote the configuration which is the restriction of $\eta$ to the interval $I_{\ell}=[2^{3\ell},3\cdot 2^{3\ell}]$ and zero outside. Observe that $I_{\ell}$'s are just the middle halves of the interval $V_{\ell}$ and are mutually disjoint for $\ell\in \Z$ (see Figure \ref{round1}). Since $\nu$ is a product measure,  $\{\eta^{(\ell)}\}_{\ell\in \N}$ are independent samples from measures $\{\nu_{I_{\ell}}\}_{\ell\in \N}$ respectively where $\nu_{I_{\ell}}$ denotes the coordinate projection of the product measure $\nu$ onto the interval $I_{\ell}$.

We then consider running rounds of truncated ARWs with initial data $\eta^{(\ell)}$ for and vertex set $V_\ell$ for $\ell=1,2,\ldots$. That is, in the $\ell^{th}$ round, we ignore all the particles that are not initially in $I_{\ell}$ and stabilize the initial configuration $\eta^{(\ell)}$ on the interval $V_{\ell}$ using the elements of the stacks (see \eqref{stack}) that have not been used up to $(\ell-1)^{th}$ round. Notice that in this sequential process, we are ignoring certain active particles from previous rounds, however using Lemma \ref{mono1} this will suffice for our purpose of obtaining a lower bound on the odometer counts. Let $w_{\ell}$ denote the increase in the odometer count at $0$ during the $\ell^{th}$ round of the process described above. We shall show that $\sum_{\ell} w_{\ell}$ becomes arbitrarily large. To this end we have the following lemma.

\begin{figure}[hbt]
\centering
\includegraphics[scale=.6]{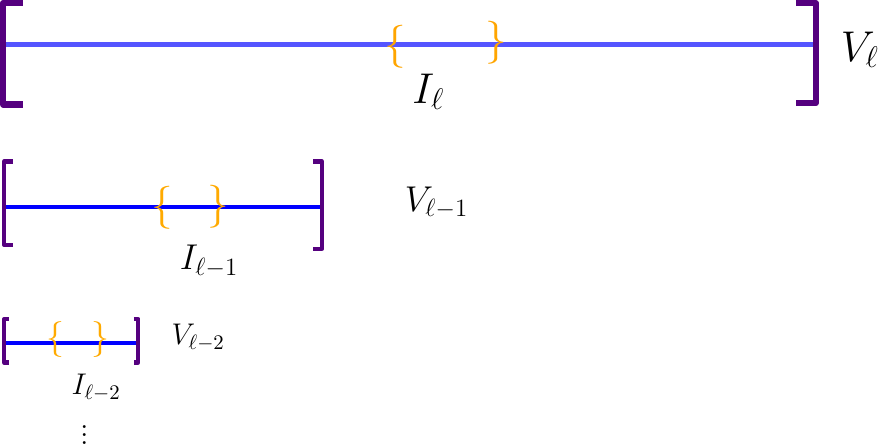}
\caption{The interval of activity $V_{\ell}$ in the $\ell^{th}$ round.}
\label{round1}
\end{figure}

\begin{lemma}
\label{zerohit} 
 $\{w_{\ell}\}_{\ell \in \N}$ are mutually independent. Further, for $\mu>0$ and $\lambda$ sufficiently small depending on $\mu$ we have that for all large $\ell$,
$$\nu\otimes \mathscr{P} (w_{\ell}>0 )\ge \frac{1}{4}.$$
\end{lemma}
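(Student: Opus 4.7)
The plan is to prove independence and the lower bound separately. For independence, note that $w_\ell$ depends only on $\eta^{(\ell)}$ (the round-$\ell$ initial data on $I_\ell$) and on the stack entries $\xi_{(x,j)}$ at $x \in V_\ell$ with index $j$ strictly larger than the number already consumed at $x$ during rounds $1,\dots,\ell-1$. Because $\nu$ is a product measure and the intervals $\{I_\ell\}$ are pairwise disjoint, the family $\{\eta^{(\ell)}\}$ is mutually independent; and because the columns $\{\xi_{(x,j)}\}_{j \in \N}$ of $\mathscr{I}$ are i.i.d.\ with the law \eqref{law1}, the unused entries at each site, conditional on any history of consumption, are again i.i.d.\ with the same law. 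Hence round $\ell$ is driven by data independent of everything seen in rounds $1,\dots,\ell-1$, and a straightforward induction on $\ell$ gives mutual independence of $\{w_\ell\}_{\ell \in \N}$.

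For the lower bound, set $r_\ell := 2^{3\ell+1}$, so that $V_\ell = [0, 2r_\ell]$ has midpoint $r_\ell$ and $I_\ell = [r_\ell/2, 3r_\ell/2]$ is the middle half, of length $r_\ell$. Since $\eta^{(\ell)}(x)$ is i.i.d.\ with mean $\mu$ on $I_\ell$, the law of large numbers gives that the total particle count is at least $\mu r_\ell /2$ except on an event $F_\ell$ of $\nu$-probability $\epsilon_\ell$ with $\epsilon_\ell \to 0$ as $\ell \to \infty$. Outside $F_\ell$, translating $V_\ell$ to $[-r_\ell, r_\ell]$ and applying Lemma \ref{key} (with density parameter $\mu/2$ and with $\lambda$ chosen small enough in terms of $\mu$) shows that the probability of stabilizing round $\ell$ inside $V_\ell$ without any particle ever touching the boundary $\{0, 2r_\ell\}$ is at most $e^{-c r_\ell}$ for some $c=c(\mu,\lambda)>0$. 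Writing $E_L$ and $E_R$ for the events that some particle reaches $0$ or $2r_\ell$ respectively during round $\ell$, this yields $\nu \otimes \mathscr{P}(E_L \cup E_R) \geq 1 - \epsilon_\ell - e^{-cr_\ell}$.

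Next, I would invoke reflection symmetry about the midpoint $r_\ell$: the map $R(y) = 2r_\ell - y$ preserves $V_\ell$ and $I_\ell$, swaps the boundary points $0$ and $2r_\ell$, leaves the product law of $\eta^{(\ell)}$ invariant, and preserves the instruction law \eqref{law1}, since $\tau_{x, x-1}$ and $\tau_{x, x+1}$ carry equal weight while $\tau_{x, \rho}$ is site-preserving. By the Abelian property (Lemma \ref{abp1}) the odometer, and hence the events $E_L, E_R$, are deterministic functions of $(\eta^{(\ell)}, \mathscr{I})$, so the reflection invariance of the joint law of round-$\ell$ data gives $\nu \otimes \mathscr{P}(E_L) = \nu \otimes \mathscr{P}(E_R)$, and therefore $\nu \otimes \mathscr{P}(E_L) \geq \tfrac{1}{2}(1 - \epsilon_\ell - e^{-cr_\ell}) > \tfrac{1}{4}$ for all $\ell$ sufficiently large. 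Finally, on $E_L$ some particle reaches $0 \in V_\ell$, making $0$ unstable, and since stabilization of $V_\ell$ forces every site in $V_\ell$ to eventually become stable, the odometer at $0$ must increment at least once, i.e., $w_\ell \geq 1$; this closes the argument.

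Among these ingredients the only genuinely nontrivial input is Lemma \ref{key} itself; the independence, the law of large numbers, and the reflection symmetry are each routine once set up carefully. The subtlest of these three is the reflection symmetry, because it requires the Diaconis-Fulton coupling (not just the ARW dynamics in law) to be $R$-invariant, which is where the equality of left-step and right-step probabilities in \eqref{law1} together with the Abelian property of Lemma \ref{abp1} come in decisively.
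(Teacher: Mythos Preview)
Your proposal is correct and follows essentially the same route as the paper: independence from the product structure of $\nu$ on the disjoint $I_\ell$'s together with the i.i.d.\ renewal of unused stack entries, then Lemma~\ref{key} combined with the law of large numbers and the left--right reflection symmetry about the midpoint of $V_\ell$. One cosmetic slip: after translating $V_\ell$ to $[-r_\ell,r_\ell]$ you should invoke Lemma~\ref{key} with $r=r_\ell/2$ and density parameter $\mu$ (not $\mu/2$), since the support becomes $[-r_\ell/2,r_\ell/2]$ and the hypothesis ``at least $\mu r$ particles'' then matches your count $\mu r_\ell/2$.
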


We postpone the proof of Lemma \ref{zerohit} for the moment and instead show first how this implies Theorem \ref{main}.

\begin{proof}[Proof of Theorem \ref{main}]
It follows from Lemma \ref{zerohit} that
$\sum_{\ell=1}^{\infty} w_{\ell}=\infty,$ 
$\nu\otimes \mathscr{P}$-almost surely.  
Observe that at the end of the $\ell^{th}$ round, the odometer function at the origin takes value $\sum_{i=1}^{\ell} w_{i}$. Now recall $u_{V_{\ell}}(0)$ from \eqref{finalodo1}. Thus  by Lemma \ref{equi1} we would be done once we show for $\ell\geq 1$, 
\begin{equation}\label{lb23}
u_{V_{\ell}}(0)\ge \sum_{i=1}^{\ell}w_{i}.
\end{equation}
The above is a straightforward consequence of the Abelian Property (Lemma \ref{abp1}) and the monotonicity property (Lemma \ref{mono1}).
Consider the configuration $\eta_{(\rm s)}^{V_{\ell-1}}$ on $V_{\ell-1}$ obtained after stabilizing the particles in $V_{\ell-1}$. 
Now to stabilize the particles in $V_{\ell},$ consider the particle configuration $\eta^*_{\ell}$ which agrees with $\eta_{(\rm s)}^{V_{\ell-1}}$ on $V_{\ell-1}$ and with the original configuration $\eta$ on $V_{\ell}\setminus V_{\ell-1}.$  
Clearly by Lemma \ref{abp1}, $u_{V_{\ell}}(0)\ge u_{V_{\ell-1}}(0)+X$ where $X$ is the total number of topplings at the origin, needed to stabilize the configuration $\eta^*_{\ell}$ in $V_{\ell}$. Now since $I_{\ell}\subset V_{\ell}\setminus V_{\ell-1}, $ trivially $\eta^*_{\ell}\ge \eta^{(\ell)}$ (the restriction of $\eta$ on $I_{\ell}.$) Thus by Lemma \ref{mono1}, $X \ge w_{\ell}$ and the  proof is complete by induction.   
\end{proof}

It remains to prove Lemma \ref{zerohit}. The main step is to show that if the number of particles in the configuration $\eta^{(\ell)}$ is not too small, it is exponentially unlikely in the length of the interval $V_{\ell}$ that during the $\ell^{th}$ round of the operation described above (i.e., stabilizing the initial configuration $\eta^{(\ell)}$ of particles in the interval $V_{\ell}$) none of the particles actually move out of $V_{\ell}$. Recalling the notation for the odometer  from \eqref{finalodo1}, formally we define the event 
\begin{equation}
\A:=\left\{\max\{u_{[-2r,2r]}(-2r),u_{[-2r,2r]}(2r)\}=0\right\}. \label{daejeon}
\end{equation}

In the following lemma we bound the probability of $\A$. Its proof will take the rest of the paper. 

\begin{lemma}
\label{key}
Fix $\mu>0$. Let $\eta_{(r)}$ be a particle distribution supported on $[-r,r]$ such that the total number of active particles in $\eta_{(r)}$ is at least $\mu r$. Let $\sP_{\eta_{(r)}}$ denote the law of the ARW dynamics with initial configuration $\eta_{(r)}$ and the stacks with law $\mathscr{P}$ (as in \eqref{stack}).  
Then for $\lambda$ sufficiently small depending on $\mu$ there exists $c=c(\mu,\lambda)>0,$ such that for all $r$ 
$$\mathscr{P}_{\eta_{(r)}}\left(\A\right)\le e^{-cr}.$$
\end{lemma}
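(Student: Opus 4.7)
The plan is to carry out the renormalization argument outlined in Section~\ref{sop}. First, I would reduce Lemma~\ref{key} to its labeled counterpart Lemma~\ref{key1} via the Abelian property (Lemma~\ref{abp1}); this lets one follow individual particles and so makes the coarse-graining below unambiguous. Next, fix a scale $K=K(\mu)\in \N$ (large, to be tuned), and replace the single-step Diaconis--Fulton stacks of Section~\ref{s:ver1} by \emph{macro-stacks} indexed by $K\Z\cap[-2r,2r]$. At each macro-point $kK$ the instructions are i.i.d.\ lazy symmetric random-walk excursions, each run from $kK$ until either hitting a neighbouring macro-point $(k\pm 1)K$ (a \emph{macro-step}) or producing a sleep instruction first (a \emph{macro-sleep}). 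A gambler's-ruin calculation shows that, for $\lambda$ small, each instruction is a macro-sleep with probability $p\le C\lambda K$, and otherwise hits each neighbour with probability close to $1/2$.

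Stabilize the process on $[-2r,2r]$ with these macro-stacks, and let $z=(z_{kK})$ denote the macro-odometer. On the event $\A$ one has $z_{-2r}=z_{2r}=0$, so no particles leave the box and all $\ge \mu r$ initial active particles must come to rest as sleepers inside $[-2r,2r]$. Each final sleeper can be attributed to a distinct macro-sleep instruction that was executed somewhere during the stabilization (though not necessarily in the macro-interval where the sleeper ultimately ends up, since a particle may visit several macro-intervals before sleeping); thus the number of macro-sleeps is at least $\mu r$. Since the macro-sleep indicators are i.i.d.\ Bernoulli$(p)$ across the $\sum_k z_{kK}$ instructions, a Chernoff estimate gives
\begin{equation*}
\mathscr{P}_{\eta_{(r)}}\!\bigl(\A,\,\text{macro-odometer}=z\bigr)\ \le\ e^{-c_1(\mu,\lambda)\,r}\,\Pi(z),
\end{equation*}
whenever $p\sum_k z_{kK}\ll \mu r$, where $\Pi(z)$ is the walk-skeleton probability (hitting distributions of the underlying simple random walks) and is summable over admissible profiles. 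When $\sum_k z_{kK}$ is instead atypically large, the walk-skeleton factor $\Pi(z)$ alone is already exponentially small by a classical large-deviation estimate for unbiased random walks.

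The main obstacle will be closing the union bound $\sum_{z:\,z_{\pm 2r}=0} \mathscr{P}_{\eta_{(r)}}(\A,\text{profile}=z)\le e^{-cr}$. The space of admissible macro-profiles carries combinatorial entropy roughly $C^{r/K}$ from the independent walk-skeletons, and the per-profile bound must strictly dominate this entropy. I would handle this by tuning the two parameters in the right order: first take $K=K(\mu)$ large enough to control the walk-skeleton entropy $C^{r/K}$ uniformly in $\lambda$, and only then take $\lambda=\lambda(\mu,K)$ small enough that $p\cdot(\text{typical }\sum_k z_{kK})\ll \mu r$ on the profiles that survive the skeleton bound. A secondary subtlety is that sleepers can be reactivated by later macro-excursions, so the naive ``one sleeper per macro-sleep'' correspondence is only an inequality; here the labeled framework of Section~\ref{ver2} is essential, because it lets each macro-sleep be attributed to a unique particle and prevents double-counting when assembling the sleeper tally.
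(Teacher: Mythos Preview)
Your overall architecture matches the paper's: pass to the labeled dynamics, renormalize at scale $K$, and take a union bound over the renormalized odometer. The gap is in the Chernoff step. You assert that ``the macro-sleep indicators are i.i.d.\ Bernoulli$(p)$ across the $\sum_k z_{kK}$ instructions'' and then bound $\mathscr{P}_{\eta_{(r)}}(\A,\text{macro-odometer}=z)$ by a sleep-count factor times a walk-skeleton factor $\Pi(z)$. But the event $\{\text{macro-odometer}=z\}$ is itself a complicated function of those same instructions: which instructions are sleeps and which are left/right steps determines how many particles reach each lattice point and hence determines $z$. Conditioning on the full profile $z$ destroys the product structure, so neither the Chernoff bound nor the factorization into (skeleton)$\times$(sleeps) is available. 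A second manifestation of the same problem is the union bound itself: the set of profiles $z$ with $z_{\pm 2r}=0$ is infinite (each coordinate is an unbounded integer), so there is no a priori entropy $C^{r/K}$ to beat; you would first need to localize $z$ to a polynomial range, and even then a direct sum over integer sequences of length $4r/K$ is super-exponential in $r$.

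The paper sidesteps both issues by \emph{not} fixing the full profile. It fixes only the data $(M(0),F_0^{+},F_0^{-})$ at the origin (and handles $M(0)>r^6$ separately via Lemma~\ref{l:largeodo}). It then reveals the stacks sequentially, one lattice site at a time: given the filtration $\sF_{i-1}$, the value $M(i)$ is determined only up to an interval $[M_*(i),M_*(i)+w(i)]$, and the union bound is parametrized by the offsets $\underline{y}=(y(i))$ rather than by the profile itself. The point is that, conditionally on $\sF_{i-1}$, for the offset to equal $y(i)$ the next $y(i)$ instructions at $iK$ must all fail to be left instructions, and these instructions are genuinely fresh; this yields a conditional factor $(0.52)^{y(i)}$ (equations~\eqref{indbound}--\eqref{indboundneg}), which after summing in $\underline{y}$ produces the entropy $\bigl(\tfrac{1}{0.48}\bigr)^{r/K}$ you were aiming for. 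Separately, your macro-instruction convention (terminate the excursion at the first sleep) leaves undefined what a woken particle does next, since it is then at a point of $\Z\setminus K\Z$ where no stack lives; the paper avoids this by attaching to each label the \emph{full} lazy walk until it hits $(x\pm1)K$, so a woken particle simply resumes the same path.
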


We finish off this section by proving Lemma \ref{zerohit} using Lemma \ref{key}.
\begin{proof}[Proof of Lemma \ref{zerohit}]
Recall that since the intervals $I_{\ell}$ are disjoint, the initial configurations $\eta^{(\ell)}$ are independent samples from the distributions $\nu_{I_{\ell}}$ for $\ell=1,2,\ldots$. Also observe that, because of the independence of the elements of the stack, the joint distribution of the unused elements of the stack at the end of the $(\ell-1)^{th}$ round (i.e.\ the stack that is being used for the $\ell^{th}$ round) is the same as the law $\mathscr{P}$ of the original stacks, and further this is independent of the elements of the stack that have been used in the first $(\ell-1)$ rounds. This implies that $w_{\ell}$ are mutually independent and further that for each $\ell\geq 1$, the distribution of $w_{\ell}$ (under $\nu\otimes \mathscr{P}$) is the same as the law of $u_{V_{\ell}}(0)$ under $\nu_{I_{\ell}}\otimes \mathscr{P}$.    

Let $|\eta^{(\ell)}|$ denote the number of particles in the configuration distributed as $\nu_{I_{\ell}}$. Due to an obvious translation invariance in the system, Lemma \ref{key} implies that,  
$$\nu_{I_{\ell}}\otimes\mathscr{P}\left(\max\{u_{V_{\ell}}(0),u_{V_{\ell}}(2^{3\ell+2})\}=0, |\eta^{(\ell)}|\geq \mu 2^{3\ell}\right)\le e^{-c2^{3\ell+2}}.$$
Observe that by strong law of large numbers for $\ell$ sufficiently large we have $\nu_{I_{\ell}}(|\eta^{(\ell)}|\geq \mu 2^{3\ell})\geq 0.9$ and hence for $\ell$ sufficiently large,
$$\nu_{I_{\ell}}\otimes\mathscr{P}\left(\max\{u_{V_{\ell}}(0),u_{V_{\ell}}(2^{3\ell+2})\}>0\right)\geq 0.9-e^{-c2^{3\ell+2}}\geq \frac{1}{2}.$$

Notice that the interval $I_{\ell}$ and $V_{\ell}$ are both symmetric about $2^{2\ell+1}$. Because $\nu_{I_{\ell}}$ is a product measure with identical marginals, and the dynamics of ARW possesses an inherent left-right symmetry, it follows by a reflection about $2^{3\ell+1}$ that $u_{V_{\ell}}(0)$ and $u_{V_{\ell}}(2^{3\ell+2})$ have identical distributions under the measure $\nu_{I_{\ell}}\otimes\mathscr{P}$ and hence we are done.
\end{proof}

The rest of this paper is devoted to the proof of Lemma \ref{key}. Recall the stacks and the underlying measure $\mathscr{P}$ from \eqref{stack}. In case of finite vertex sets (which will be the focus of our analysis) by Lemma \ref{abp}, the final odometer function and the particle distribution is just a function of the stacks and the initial particle configuration. However for our purposes we need a different set of stacks, labeled particles, and a certain toppling rule. This is described in the next section and we refer to this as \textbf{Labeled ARW dynamics}. 
Thus to distinguish the two, we denote the description of ARW so far (equivalently the measure space corresponding to $\sI$ (see \eqref{stack} and the initial particle configuration $\eta$)  as \textbf{Unlabeled ARW dynamics}\footnote{Note the abuse of notation, since there is no canonical process in the discrete time setting and any order of toppling eventually lead to the same statistics by Lemma \ref{abp}.}.
 The next section also provides a natural coupling between the two `processes'. 

\section{Labeled ARW Dynamics}
\label{ver2}

We start by outlining how we prove Lemma \ref{key}. Remember that the lemma states that if we start with a typical particle distribution in $[-r,r]$ (and 0 outside this interval) then it is unlikely that when we stabilize, the odometer at both $-2r$ and $2r$ is $0$. Typically we shall have about $2\mu r$ many particles in our system and with very high probability we will have at least $\mu r$ particles in the interval $[-r,r]$. Notice that on the event $\A$, which was defined in \eqref{daejeon},
all the particles eventually fall asleep in the interval $[-2r,2r]$. Now consider $K$ a large integer $\gg \mu^{-1}$ (Further discussion on the choice of $K$ is presented in \S~\ref{s:proofkey}). Divide the interval $[-2r,2r]$ into subintervals $[iK,(i+1)K]$ of length $K$ each. In the final configuration, we typically expect only a small number of particles asleep on each interval $[iK,(i+1)K]$ if $\lambda$ is sufficiently small depending on $K$. Indeed, whenever a particle is released from the site $iK$ it is extremely likely (by taking $\lambda$ small) that this particle will reach either $(i-1)K$ or $(i+1)K$ without falling asleep in between. Further, in such a case, it wakes up all the sleepy particles in either $[iK,(i+1)K]$ or $[(i-1)K,iK]$. As $\lambda$ is small, it is highly likely that all of these recently woken particles will make it to either $(i-1)K$ or $(i+1)K$ before falling asleep again. Hence  it is unlikely that  in the $4r/K$ many intervals of length $K$ ,on average there will be $K\mu /4$ particles asleep (since $K \gg \mu^{-1}$). Combining the above, we conclude that since there are at  least $\mu r$ many particles to begin with,  at least one particle makes it to $-2r$ or $2r$.

To make this intuition into a proof, we shall introduce a different set of stacks of instructions, where, instead of a single step, the instructions now  consist of a random walk path that tells the particle (started at $iK$) its steps until it reaches $iK\pm K$. 

\subsection{Random Walk Stacks}
Let  $\lambda>0$ and consider the  lazy symmetric random walk $S_{\lambda}(\cdot)$ with laziness $\frac{\lambda}{\lambda+1},$ on $\Z$ started from the origin. By this we mean $S_{\lambda}(0)=0$ and for each discrete time step, $t=0,1,2,\ldots$, given the path up to time $t,$ 
\begin{align}\label{steps1}
S_{\lambda}(t+1)-S_{\lambda}(t)=
\left\{
\begin{array}{cc}
0 & \text{with probability } \frac{\lambda}{\lambda+1}\\
1 & \text{with probability } \frac{1}{2(\lambda+1)}\\
-1 & \text{with probability } \frac{1}{2(\lambda+1)}.\\
\end{array}
\right.
\end{align}
 
Now for any positive integer $K,$ let $\tau(\{-K,K\})$ denote the hitting  time of the set $\{-K,K\}$ for the walk $S_{\lambda}(\cdot)$ and let
$$S_{K,\lambda}(\cdot)=S_{\lambda}(\min(\cdot, \tau(\{-K,K\}))),$$  be the killed random walk, i.e.\ $S_{\lambda}(\cdot)$ stopped on hitting $\{-K,K\}$. 

We now define a probability space where instructions will be the killed random walk paths instead of single steps as in \eqref{stack}. 
We consider the renormalized lattice, 
\begin{equation}\label{renorlatt}
K\Z=\ldots -2K,-K,0,K,-2K, \ldots,
\end{equation}
and the following set of instructions at the points $K\Z$ (which we shall sometimes refer to as lattice points): 
\begin{equation}\label{stack2}
\mathscr{I_*}=
\begin{array}{ccccccc}
\ldots&\zeta_{(-2,1)}&\zeta_{(-1,1)}&\zeta_{(0,1)}&\zeta_{(1,1)}&\zeta_{(2,1)}&\ldots\\
\ldots&\zeta_{(-2,2)}&\zeta_{(-1,2)}&\zeta_{(0,2)}&\zeta_{(1,2)}&\zeta_{(2,2)}&\ldots\\
\vdots&\vdots&\vdots&\vdots&\vdots&\vdots&\vdots
\end{array}
\end{equation}
where $\zeta_{(x,j)}$ are independent for any $x \in \Z$ and $j\in \N,$ and moreover, 
\begin{equation}\label{law2}
\zeta_{(x,j)}\overset{d}{=}xK+S_{K,\lambda}(\cdot),
\end{equation}
i.e.,\ the elements $\zeta_{(x,j)}$ are i.i.d.\ copies of random walk paths $S_{K,\lambda}(\cdot)$ started at $xK$ and stopped on hitting $xK\pm K.$
We will still continue to think of the random walk steps to be distributed as the elements $\xi_{(\cdot,\cdot)}$ in \eqref{stack} namely:
\begin{equation}\label{law3}
 \begin{array}{cc}
\tau_{z,z-1} & \text{with probability } \frac{1}{2(\lambda+1)}\\
\tau_{z,z+1} & \text{with probability } \frac{1}{2(\lambda+1)}\\
\tau_{z,\rho} & \text{with probability } \frac{\lambda}{\lambda+1}.
\end{array}
\end{equation}
where $z$ is the current location of the random walk.
That is left and right steps in the random walks would be interpreted as instruction to go left or right, whereas lazy steps would be interpreted as sleep instructions. However, how these instructions act upon the configurations will be slightly different from the standard ARW dynamics of \S~\ref{des1} as explained below. We call the above product measure $\mathscr{P}_*$ analogous to $\mathscr{P}$.

\subsection{Labeled Particles and Labeled ARW Dynamics}\label{label23}
With the above stacks we now define a variant of the ARW dynamics from the discussion following Definition \ref{stable1}. To define the modified dynamics, we shall restrict ourselves to initial particle configurations supported on $K\Z$.  In this setting, the particles will be labeled. The labels will keep changing over time and will be from the set $\Z \times \N.$

\textbf{Labeling Scheme:}
Suppose we start with a configuration $\eta_*$ supported on $K\Z$. The first co-ordinate of the label of all the particles starting at $iK$ will be $i$. The second label is an arbitrary chosen  bijection to $\llbracket1, n_i\rrbracket$ where $n_{i}$ is the number of particles at $iK$ initially, i.e., the particles are labeled $(i,1), (i,2), \ldots , (i,n_i)$ in an arbitrary way. See Figure \ref{initial}.  
\begin{figure}[hbt]
\centering
\includegraphics[scale=.6]{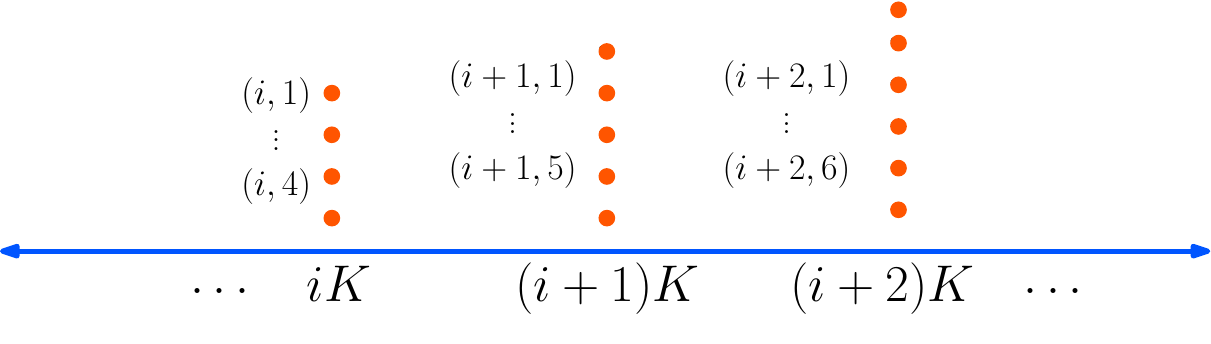}
\caption{Initial labeling}
\label{initial}
\end{figure}

As mentioned above, the labels are not static but evolve with time. Any particle of label $(i,j)$, will change its label when it hits the lattice points $(i-1)K$ or $(i+1)K,$ in which case the label would change to $(i',j')$ where $i'=i\pm1$ according to the lattice point hit and $j'$ is the smallest positive integer such that the label $(i',j')$ has never been used before in the history of the process. A particle with label $(i,\cdot)$ will sometimes be referred to as a particle \textbf{emitted} from $iK$.

We now introduce a toppling scheme for the labeled process analogous to \S~\ref{s:ver1}. To do that we first associate with each particle a random walk from the stacks \eqref{stack2}.

\textbf{Associating Random Walks with Particles:}
The first time a particle acquires label $(a,b)$ (recall that our labeling rule implies that at this time the particle must be at $aK$), it gets associated with the random walk $\zeta_{(a,b)}$ which stays associated with the particle until it changes its label next. 

Next we define how to use the random walks to topple particles. Informally we do the following. Whenever we decide to topple a particle with label $(a,b)$, we use the first unused step of the random walk $\zeta_{(a,b)}$. Let $\tau$ be the total number of steps in $\zeta_{(a,b)}$ (i.e., until the random walk hits $aK\pm K$). Whenever the particle is toppled, before $\tau$, say for the $i^{th}$ time, the toppling step is the $i^{th}$ step of  $\zeta_{(a,b)}$. We now provide a formal description.

\textbf{Toppling Rule and Labeled ARW dynamics:}
Recall the transitions $A\to S$ and $A+S\to 2A$ between particles in active and sleepy states from \S~\ref{s:ver1} and that only $A$ particles could be toppled. We introduce a particle interaction which is a slight variant of that dynamics, where the particles with different first labels do not interact. We shall call this variant, the Labeled ARW dynamics and it will always be defined on a finite universe $V$. That is, we shall fix a finite subset $V$ of $\Z$ and and particles outside $V$ become instantaneously inactive and stay inactive for ever. To distinguish from the standard ARW dynamics from \S~\ref{s:ver1}, we denote the state of active and sleepy particles in the Labeled ARW dynamics by $\tilde{A}$ and $\tilde{S}$ respectively.  

Consider stacks of random walks distributed according to $\mathscr{P}_{*}$ (defined in the previous section) and a particle configuration $\eta_{*}$ distributed supported on $K\Z$. Fix $V$ and let all particles in $V$ initially be in $\tilde{A}$ state. Consider the labeling scheme described above. For concreteness  we shall work with a particular order of topplings. To this end we start by defining the lexicographic order on the labels: i.e.\  $(a,b)<(c,d)$ if $a<b$ or $a=b$ and $c<d$.

At each time $t$, we topple the $\tilde{A}$ particle with the smallest label, say $(a,b)$, which is well defined as we are restricted to the finite set $V$.\footnote{Note that in Unlabeled ARW dynamics, particles were unlabeled and hence the definition of the process involved toppling sites rather than particles unlike here.} To topple, use the first unused instruction of random walk $\zeta_{(a,b)}$. The left and right step instructions from the random walks are interpreted as before and used to move the particle one step to left or right respectively. The lazy steps in the random walks are interpreted as sleep instructions. 

However, in the Labeled ARW dynamics the
$\mathbf{\tilde{A}+\tilde{S}\to 2\tilde{A}}$ transition occurs only when the label of the two particles have same first co-ordinate. Let us illustrate this with an example. Consider an $\tilde{S}$ particle at $y\in(aK, (a+1)K)$ with label $(x,j)$. Notice that by definition either $x=a$ or $x=a+1$. Now if an $\tilde{A}$ particle is toppled from $y\pm 1$ and moves to $y$, the transition $\tilde{A}+\tilde{S}\to 2\tilde{A}$ occurs only if the $\tilde{A}$ particle has label of the form $(x,\cdot)$. 
Thus in words the above says that a particle which was emitted from a lattice site $aK\in K\Z$ can be woken up only by another particle emitted from $aK$. 

Also in the Labeled ARW dynamics the
$\mathbf{\tilde{A}\to \tilde{S}}$ transition is allowed to happen only when the label of no other particle at that site has the same first co-ordinate. Notice the difference with $A\to S$ transition. As observed before, the particles with different first labels do not interact. \footnote{As a consequence, it is allowed to have two $\tilde{S}$ particle at a  site, provided they have different first labels. Moreover, if  there are two $\tilde{S}$ particles at $y\in(iK, (i+1)K)$ (one with first label $i$ and another with $(i+1)$) and an $\tilde{A}$-particle with first label $i$ moves to $y$. It then wakes up the $\tilde{S}$-particle at $y$ with label $i$, but not the one with label $i+1$, i.e., in such a situation we have a $\tilde{A}+2\tilde{S}\to 2\tilde{A}+\tilde{S}$ transition.      
}

The system is said to stabilize if all the particles in $V$ are in $\tilde S$ state (recall that a particle that moves out of $V$ is inactive forever). As before it is easy to argue that as $V$ is finite and the number of particles is finite the system stabilizes almost surely in finite time. We define the odometer function and the final configuration analogous to \eqref{finalodo1} and \eqref{finalconf1} respectively. 

Note in Unlabeled ARW dynamics there was no toppling rule. Hence to ensure that \eqref{finalodo1} and \eqref{finalconf1} are well defined we had to invoke the Abelian property (Lemma \ref{abp}).
However in the Labeled ARW dynamics the toppling rule is fixed and hence there is no ambiguity in the definitions of the final odometer count and the final configuration.  We denote the odometer function and the final configuration by 
\begin{equation}\label{labelnot1}
u_{*,V}(\cdot) \mbox{ and }\eta_{*,\infty,V}
\end{equation}
 respectively. Note that the dependence on the initial configuration is suppressed.

In the next section we relate the Labeled ARW dynamics with the Unlabeled ARW dynamics. Since by definition in the labeled process, particles with different first labels do not interact, one would expect more active to sleepy transitions and less sleepy to active transitions, than the unlabeled process.  So started with the same initial configuration it is expected that the Labeled ARW dynamics is quicker to stabilize (in a stochastic sense).  To make this formal we need to define a coupling between the measures $\sP$ and $\sP_{*}$ (the law of the stacks used in the unlabeled and the Labeled ARW dynamics respectively, see \eqref{stack}, \eqref{stack2}).

\subsection{Coupling Labeled ARW Dynamics with Unlabeled ARW dynamics}
Fix a finite subset $V\subseteq \Z$. Given an initial particle configuration $\eta$ supported on $V$, we define a natural coupling $\cP:=\cP_{\eta}$ of the stacks $\mathscr{I}$ and $\mathscr{I}_{*}$ having laws $\sP$ and $\sP_{*}$ respectively. The coupling is run in three rounds.  Note that the labeled process is only defined when the initial configuration is supported on $K\Z.$ This is what the first round achieves.\\

\textbf{First Round:}
We run Unlabeled ARW dynamics in $V$ where we topple all active particles in $\Z\setminus K\Z$. After the end of this round the system only has active particles at $V\cap K\Z$. Call the random configuration of active particles $\tilde{\eta}$, supported on $K\Z$. The distribution of $\tilde{\eta}$ is a function of $\eta$. 

\textbf{Second Round:}
In this round we describe a `natural' coupling  of Unlabeled ARW dynamics and the Labeled ARW dynamics starting from $\tilde{\eta}$. The coupling $\mathcal{P}$ will be Markovian.
Recall the labeling of the particles and toppling rule described in \S~\ref{label23}. 
Also recall that by the Abelian property (Lemma \ref{abp}) the final statistics of Unlabeled ARW dynamics are independent of the order in which particles or sites are toppled.  Thus in order to couple with the labeled process we choose the same toppling rule as the latter. Namely, we topple the same particle in both the processes and couple the stacks $\sI$ and $\sI_*$ such that the two processes stay identical throughout. Note that this is possible due to the following two reasons:
\begin{itemize}
\item
The steps of the random walk $\zeta_{(\cdot,\cdot)}$ in \eqref{stack2} have the same distribution as $\xi_{(\cdot,\cdot)}$ in \eqref{stack}, as already remarked in \eqref{law3}.
\item
Arguing inductively, if the particle configuration stays the same in both the versions under $\mathcal{P}$ up to the  $t^{th}$ toppling, since an $\tilde{A}$ particle is always an $A$ particle, the $(t+1)^{th}$ toppling would be legal in both the processes. 
\end{itemize}
As a consequence of the above, the stabilizing sequence in the Labeled ARW dynamics would always be a legal sequence in Unlabeled ARW dynamics. Thus this round of the coupling runs until the Labeled ARW dynamics has been stabilized in $V$.

Note that potentially many of the particles in $\eta$  that were not in $\tilde\eta$ and hence not toppled in the second round could have woken up in the process rendering the configuration running in Unlabeled ARW dynamics unstable. The last round takes care of these remaining particles.

\textbf{Third round:}
Independently sample the rest of the stacks $\sI$ and $\sI_*$. Use the former and run Unlabeled ARW dynamics until stabilization, toppling arbitrarily. By Lemma \ref{abp}, this does not change the final distribution. 

Thus we have completed the description of the coupling between the two processes. The following lemma follows directly from the  definition of $\cP_{\eta}$.

\begin{lemma}
\label{o:ver1comp}
Under the coupling $\cP_{\eta}$ described above, $u_{V}(\cdot)$ started from $\eta$ is lower bounded by $u_{*,V}(\cdot)$ (the odometer for the labelled process, see \eqref{labelnot1}) starting from $\tilde{\eta}$.
\end{lemma}

To prove Lemma \ref{key}, it suffices to lower bound $u_{*,[-2r,2r]}(\cdot)$ started from $\tilde{\eta}$. Recall that $\tilde{\eta}$ is a random particle configuration supported on $K\Z$ whose distribution depends on $\eta$, but is independent of the stack $\mathscr{I}_{*}$ and $\eta$ is an initial configuration supported on $[-r,r]$ with at least $\mu r$ particles. This is the content of the next two lemmas. 

\begin{lemma}
\label{key1} 
Fix $\mu >0$. There exists $K=K(\mu)$ and $\lambda=\lambda(\mu,K)>0,$ such that the following is true for all  large enough $r$.
Consider an initial particle configuration $\widehat{\eta}$ of active particles supported on $[-r,r]\cap K\Z$ with $\frac{\mu r}{2} \le |\widehat{\eta}|\le 2\mu r$ particles. Then there exists $c>0$ such that for all $r$
\begin{equation}\label{keyalt}
\mathscr{P}_{*,\widehat{\eta}} (\A)\le e^{-cr}
\end{equation}
where $\sP_{*,\widehat{\eta}}$ denotes the law of the Labeled ARW dynamics started with the initial particle configuration $\widehat{\eta}$ and $\A$ was defined in \eqref{daejeon}.
\end{lemma}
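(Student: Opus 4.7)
I follow the strategy outlined in Section \ref{sop}: union-bound $\mathscr{P}_{*, \widehat{\eta}}(\A)$ over possible odometer profiles on the renormalized lattice. Set $M := 2r/K$ (WLOG $K \mid 2r$) and write $z_i := u_{*, V}(iK)$ for the odometer at lattice point $iK$ inside $V := [-2r, 2r]$; the event $\A$ forces $z_{-M} = z_M = 0$. Each emission from $iK$ uses a walk $\zeta_{(i, j)}$ with three possible outcomes: \emph{right-completing} (walk hits $(i+1)K$), \emph{left-completing} (walk hits $(i-1)K$), or \emph{permanently asleep} (particle ends asleep inside $((i-1)K, (i+1)K)$). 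Writing $z_i = z_i^R + z_i^L + z_i^S$ for the three counts, particle conservation together with $z_{\pm M} = 0$ forces $\sum_i z_i^S = \sum_i \widehat{\eta}(iK) \geq \mu r/2$.

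The key probabilistic input bounds $p := \mathscr{P}_*[\zeta_{(i,j)} \text{ contains a sleep step}]$. Since $\zeta_{(i,j)}$ is a lazy SRW with laziness $\lambda/(1+\lambda)$ stopped at $\pm K$ from $0$, we have $p = 1 - \mathbb{E}_0[(1+\lambda)^{-T_{\pm K}}]$ where $T_{\pm K}$ is the ordinary SRW hitting time. Using $\mathbb{E}_0[T_{\pm K}] = K^2$, a Taylor expansion yields $p \leq C_0 K^2 \lambda$ for small $\lambda$, with $C_0$ universal. Crucially $z_i^S \leq \#\{j \leq z_i : \zeta_{(i,j)} \text{ sleepy}\}$, since permanent sleep requires a sleep instruction. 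For each profile $z$ with $z_{\pm M} = 0$, the walks are independent and each is sleepy with probability $\leq p$, so on $\{u_{*,V} = z\} \cap \A$ at least $\mu r/2$ of the $N := \sum_i z_i$ used walks must be sleepy, yielding
$$\mathscr{P}_{*, \widehat{\eta}}(u_{*,V} = z, \A) \;\leq\; \binom{N}{\mu r/2}\,p^{\mu r/2}.$$
Summing over profiles with fixed $N$ (combinatorial factor $\binom{N + 2M - 2}{2M - 2}$) and then over $N$, choosing $K$ large depending on $\mu$ and $\lambda$ small depending on $K$, the aim is $\mathscr{P}_{*, \widehat{\eta}}(\A) \leq e^{-cr}$.

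The main obstacle I expect is controlling the sum over $N$. The bound $\binom{N}{\mu r/2}\,p^{\mu r/2} \leq (2epN/(\mu r))^{\mu r/2}$ requires $N \lesssim r$ to beat the $p$-decay, but a priori $N$ can be as large as $\Theta(r^3/K^2)$ since each particle may revisit every lattice point up to $\Theta(r/K)$ times during its effective lattice-scale random walk. To resolve this, one refines the per-profile bound by tracking the full $(R, L, S)$ split of the walks rather than only their sleepiness: the boundary condition $z_{\pm M} = 0$ forces the lattice-scale flow on $K\Z$ to be confined inside $[-M, M]$, which, via the balance $z_i = \widehat{\eta}(iK) + z_{i-1}^R + z_{i+1}^L$, entails large deviations in the $R/L$ imbalance near the boundary. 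The product of these multinomial large-deviation factors across $i$ should supply the extra multiplicative decay needed to close the union bound without any external control on $N$, and I expect this careful bookkeeping to be the technical heart of the argument.
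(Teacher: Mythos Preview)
Your proposal is not a proof but a plan with an acknowledged gap, and the proposed fix does not close it. The core issue is that bounding $z_i^S$ by the number of \emph{sleepy} walks among the first $z_i$ at site $i$ is far too crude: for the typical confined profile on $[-2r,2r]$ one has $N=\sum_i z_i$ of order $r^3/K^3$, so the expected number of sleepy walks is $Np\asymp r^3\lambda/K$, which vastly exceeds $\mu r/2$ for any fixed $\lambda$. Thus $\binom{N}{\mu r/2}p^{\mu r/2}$ gives no decay whatsoever in that regime. Your suggested remedy --- extract extra decay from $R/L$ large-deviation costs forced by the balance equations and $z_{\pm M}=0$ --- does not save this: for a profile with $z_i\asymp r^2$ in the bulk, the required per-site $R/L$ imbalance is $O(rK)$ against a standard deviation of $\sqrt{z_i}\asymp r$, so the per-site cost is only $e^{-O(K^2)}$, while a naive enumeration of such profiles carries entropy $e^{\Theta((r/K)\log r)}$; the union bound diverges as $r\to\infty$ for fixed $K$. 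The point is that you are not using the fact that being \emph{permanently} asleep (rather than merely encountering a sleep instruction) is what is rare.

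The paper's argument is structurally different and supplies the two ideas you are missing. First, it splits off the case $M(0)\ge r^6$ and disposes of it by Azuma--Hoeffding on the $R/L$ counts (this is where your balance-equation/large-deviation intuition is actually used, but only for very large odometer). Second --- and this is the heart of the matter --- for $M(0)\le r^6$ it does \emph{not} union-bound over profiles. Instead it fixes $(M(0),F_0^+,F_0^-)$ and exploits a near-deterministic recursion: given $M(j)$, the flux constraint and Lemma~\ref{monojump} pin $M(j+1)$ to a short interval $[M_*(j+1),M_*(j+1)+w(j+1)]$, so profiles are parametrized by a sequence $\underline{y}$ with $\sP_*(\text{profile }\underline{y})\lesssim \prod_j(0.51)^{y_j}$. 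Crucially, the number of permanently asleep $(j,\cdot)$-particles is governed only by the \emph{last few} walks near $M_*(j)$ (a left and a right instruction there wipe out all sleepers), so it is typically zero regardless of how large $M(j)$ is; this yields the exponential-moment bound $\mathbb E[e^{\tilde S_{j,j-1}+\tilde S_{j,j+1}}\mathbf 1(\cdots)\mid\mathscr F_{j-1}]\le (0.52)^{y_j}$. Summing over $\underline{y}$ and then over the polynomially many triples $(a_0,f_0^\pm)$ gives $e^{-cr}$. Replacing ``sleepy walk'' by ``permanently asleep'' and replacing the global profile union bound by this filtration-based recursion are precisely the moves your plan lacks.
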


\begin{lemma}
\label{l:particle}
Recall the coupling $\cP_{\eta}$ from above. For any $\eta$ supported on $[-r,r]$ with at least $\mu r $ many particles, for $\lambda$ sufficiently small there exist $c>0$ such that  
$$\cP_{\eta}\left(|\tilde{\eta}|<\frac{\mu r}{2}\right)\leq e^{-cr}$$
where $|\tilde{\eta}|$ denote the number of particles in $\tilde{\eta}$.
\end{lemma}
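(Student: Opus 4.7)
The plan is to lower bound $|\tilde\eta|$ by a sum of independent Bernoullis, one per initial particle, and apply a Chernoff bound. The starting observation I will rely on is that because each toppling consumes a fresh i.i.d.\ instruction, the site-based stack $\mathscr{I}$ of \eqref{stack} is equivalent in law to a \emph{particle-based} stack in which every initial particle $i$ carries its own independent i.i.d.\ sequence $(I^{(i)}_{k})_{k \ge 1}$ with marginal as in \eqref{law1}, and the $k$-th toppling of particle $i$ consumes $I^{(i)}_{k}$. I will work throughout in this equivalent formulation.

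Next, for each initial particle $i$ starting at $x_{i}$, define its \emph{solo trajectory} as the lazy SRW on $\Z$ that $i$ would perform in isolation using the stack $(I^{(i)}_{k})$, stopped the first time it either enters $K\Z$ or reads a sleep instruction. Set $\xi_{i}:=1$ in the former case (and automatically if $x_{i}\in K\Z$), and $\xi_{i}:=0$ in the latter; the $\xi_{i}$ are then mutually independent since each depends only on particle $i$'s own stack. The key dominance is
\[
|\tilde\eta| \;\ge\; \sum_{i} \xi_{i}, \qquad \cP_{\eta}\text{-a.s.}
\]
Indeed, when $\xi_{i}=1$ the first $M_{i}$ entries $I^{(i)}_{1},\dots,I^{(i)}_{M_{i}}$ of particle $i$'s stack are move instructions tracing an SRW path from $x_{i}$ to the nearest point of $K\Z$. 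In the actual first round particle $i$'s $k$-th toppling also consumes $I^{(i)}_{k}$, and since none of these first $M_{i}$ instructions is a sleep instruction, their effect is insensitive to whether other particles share particle $i$'s site. Particle $i$ therefore traces the identical $M_{i}$-step path, arrives at $K\Z$, and is not toppled again during round 1, so it contributes to $\tilde\eta$.

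Estimating $\mathbb{E}[\xi_{i}]$ is then an elementary random walk computation. If $x_{i}\in K\Z$ then $\xi_{i}\equiv 1$; otherwise $x_{i}$ lies strictly between two consecutive points of $K\Z$ and gambler's ruin gives $\mathbb{E}[M_{i}] \le K^{2}/4$. With $p:=\lambda/(\lambda+1)$ the per-toppling sleep probability, the first sleep must come after the SRW has hit $K\Z$, so
\[
\cP_{\eta}(\xi_{i}=1) \;=\; \mathbb{E}\!\bigl[(1-p)^{M_{i}}\bigr] \;\ge\; 1 - p\,\mathbb{E}[M_{i}] \;\ge\; 1 - pK^{2}/4,
\]
which is at least $3/4$ once $\lambda$ is taken small enough (depending on $K$, and therefore on $\mu$) that $pK^{2}\le 1$.

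Assembling the pieces, since $\eta$ contains at least $\mu r$ particles, $\sum_{i}\xi_{i}$ is a sum of at least $\mu r$ independent Bernoullis of mean at least $3/4$, and a standard Chernoff bound yields $\cP_{\eta}(\sum_{i}\xi_{i} < \mu r/2) \le e^{-c\mu r}$ for some $c=c(\mu)>0$; combined with the dominance above this is the required estimate. The only step I expect to require real care is the passage from site-based to particle-based stacks: it ultimately rests on the Abelian property (Lemma \ref{abp}) together with the fact that individual instructions are i.i.d., so that successive topplings in either representation read independent fresh draws and the induced law on the evolution is unchanged.
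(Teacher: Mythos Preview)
Your proof is correct and follows essentially the same route as the paper's: each initial particle independently reaches $K\Z$ before encountering a sleep instruction with probability at least $3/4$ for small $\lambda$, and a large deviation bound on the resulting Bernoulli sum finishes the job. The paper's proof asserts the independence of these events without comment, whereas you supply the missing justification by passing to particle-based stacks; this is a genuine clarification, but the underlying argument is the same.
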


\begin{proof}
For any particle in $\eta$, at $y\in \big((x-1)K,xK\big)$, notice that for $\lambda$ sufficiently small depending on $K$, each particle has a chance at least $\frac{3}{4}$ of reaching $\{(x-1)K, xK\}$ before falling asleep and these events are independent of each other. Since all such particles are in $\tilde{\eta}$, the conclusion now follows from the fact that $|\eta|\ge \mu r$ and a large deviation estimate. 
\end{proof}  

We finish off this Section with the proof of Lemma \ref{key}. The rest of this paper is devoted to proving Lemma \ref{key1}.

\begin{proof}[Proof of Lemma \ref{key}]
Given $\mu>0$ we get $K$ and $\lambda$ from Lemma \ref{key1}.
The proof follows from a combination of Lemmas \ref{l:particle},  \ref{key1} and  \ref{o:ver1comp}.
\end{proof}

In the following section we develop the remaining technical results needed for the proof of Lemma \ref{key1}.
\section{Technical Preliminaries}
\label{s:prelim}
Recall the set up of Lemma \ref{key1}: fix $K=K(\mu)$ sufficiently large and $r$ sufficiently large depending on $K$ and a particle configuration $\widehat{\eta}$ supported on $[-r,r]\cap K\Z$ that contains at least $\frac{\mu r}{2}$ many particles. Consider Labeled ARW dynamics on $[-2r,2r]$ starting with initial configuration $\widehat{\eta}$.  We shall denote the law of this process by $\sP_{*,\widehat{\eta}}$. Let $\widehat{\eta}_{\infty}$ denote the final configuration and for $xK\in (-2r,2r),$ let $\widehat{\eta}_{x,\infty}$ denote the final configuration of the particles labeled $(x,\cdot)$. By definition, it follows that $\widehat{\eta}_{x,\infty}$ is supported on $((x-1)K, (x+1)K)$.
Thus as before $|\widehat{\eta}_{x,\infty}|$ denotes the total number of particles  labeled $(x,\cdot)$ in $\widehat{\eta}_{x,\infty}$ (note that all of the these particles are sleepy.). Moreover let $|\widehat{\eta}_{x,\infty,-}|$  denote the total number of particles (sleepy) labeled $(x,\cdot)$ in the interval $((x-1)K,xK)$ and similarly let $|\widehat{\eta}_{x,\infty,+}|$  denote the total number of particles (sleepy) labeled $(x,\cdot)$ in the interval $(xK,(x+1)K).$
Also recall that $u_{*,[-2r,2r]}(x)$ counts the number of times a labeled particle is toppled at a site $x$,  up to stabilization. However for our purposes we need to determine how many elements (random walk paths) from the stacks in \eqref{stack2} have been used. For $xK\in K\Z$, define the \textbf{renormalized odometer} as: 
\begin{equation}
\label{e:reodo}
M(x):= \max \left\{ j: \mbox{ a particle labeled }(x,j) \mbox { was emitted}\right\},
\end{equation}
i.e.\ the number of random walk paths $\zeta_{(x,\cdot)}$ in \eqref{stack2} that were used until stabilization.

Now we define certain statistics of the process, which will be used in the proof of Lemma \ref{key1}.  For each $xK\in [-2r,2r]$, we keep track of the net number of particles labeled $(x,\cdot)$ (emitted from $xK$)  hitting $(x-1)K$, $xK-1$, $xK+1$ and $(x+1)K$ respectively by 

\begin{align}\label{not21}
L(x-1 \leftarrow x)&:=L(x-1 \leftarrow x)(\widehat{\eta}), \\ 
\nonumber
L(\leftarrow x )&:=L(\leftarrow x)(\widehat{\eta}), \\
\nonumber
R(x \to )& :=R(x \to) (\widehat{\eta}), \\
\nonumber
R(x \to x+1)& :=R(x \to x+1)(\widehat{\eta})
\end{align}
respectively. 
More formally, since $M(x)$ is the total number of random walk instructions $\zeta_{(x,\cdot)}$ that have been used,   
we have \begin{align*}L(x-1 \leftarrow x)& =\sum_{i=1}^{M(x)}\mathbf{1}(\text{ the particle labelled }(x,i) \text{ hit }(x-1)K),\\
L(\leftarrow x )& =L(x-1 \leftarrow x) + |\widehat{\eta}_{x,\infty,-}|.
\end{align*}
Thus,  $L(x-1 \leftarrow x)$ is the number of random walk paths $\zeta_{(x,i)}$ where $1\le i \le M(x),$ that have hit $(x-1)K,$ whereas  $L(\leftarrow x)$ is the total number of random walk paths that have either settled down in the interval $((x-1)K, xK),$ or have hit $(x-1)K.$

It is important to note that we are counting the net number of particles in all the counts, i.e., if a random walk path moved back and forth across $xK$ a few times to eventually land somewhere in the interval $((x-1)K,xK)$, it just increases $L(\leftarrow x)$  by one and does not change the other quantities;  if it hits $(x-1)K$ it increases both  $L((x-1) \leftarrow x)$ and $L(\leftarrow x)$ by one each but do not change $R(x \to x+1)$ or $R(x \to).$ 

Similarly $R(x \to x+1)$ and $R(x \to),$  are defined. We omit the details. 
 See Figure \ref{hneigh} for an illustration.
\begin{figure}[hbt]
\centering
\includegraphics[scale=.7]{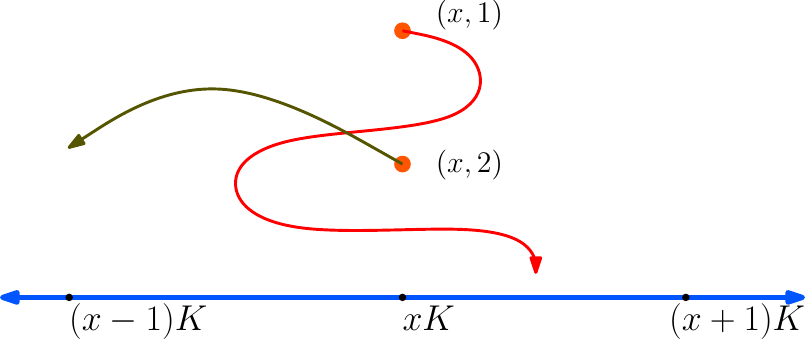}
\caption{The particle $(x,1)$ eventually lands to the right of $xK$ and hence contributes one to $R(x \to ).$ On the other hand the particle $(x,2)$ hits $(x-1)K$ and hence contributes one to $L(\leftarrow x)$ and $L(x-1\leftarrow x)$. } 
\label{hneigh}
\end{figure}
Next we observe that the statistics in \eqref{not21} are ``local" functions of the Labeled ARW dynamics which is precisely the purpose behind defining the latter, i.e., particles with different first labels do not interact. We make things formal below.
\subsection{Single site dynamics}\label{ssd}
 For any $x\in \Z,$ consider the Labeled ARW dynamics in $V_x=((x-1)K,(x+1)K)$ started with $m$ particles at $x$. Then the quantities analogous to \eqref{not21}  will be denoted by 
\begin{equation}
\label{ssd1}
L^{SS}_{x-1 \leftarrow x }(m), L^{SS}_{ \leftarrow x }(m), R^{SS}_{x \to}(m), R^{SS}_{x \to x+1}(m)
\end{equation}
respectively. Thus the above can be thought as a localized version of the Labeled ARW  dynamics. The final configuration in this process will be denoted by $\eta^{SS}_{x}(m)$.

\vspace{0.5cm}

Consider Labeled ARW dynamics started with initial configuration $\hat{\eta},$ stabilized in $(-2r,2r)$. The particles with label $(x,\cdot)$ are toppled only at certain  times say $t_1, t_2,\ldots$. Thus the quantities in \eqref{not21} are functions of the configurations at these times. By definition $$\{(x,1),(x,2),\cdots , (x,M(x))\}$$ are the different labels of such particles where $M(x)$ is the renormalized odometer defined in \eqref{e:reodo}. 

By definition,  all the elements/random walks $\zeta_{(x,i)}$ for $i \le M(x)$ are used, and this by definition happens in one of two scenarios, either a particle started at $x$ and hence used a stack element $\zeta_{(x,\cdot)}$ when it was first  toppled, or a particle labeled $(x+1,j)$ or $(x-1,j)$ for some $j$ hits $x$ and changes its first label to $x$ and uses a stack element $\zeta_{(x,\cdot)}.$

Hence we have the following lemma whose proof follows by an easy induction on $M(x)$ and is omitted.
\begin{lemma} 
\label{crucial}
For a fixed initial configuration $\hat{\eta}$ recall the definition of $M(\cdot)$ given in \eqref{e:reodo}. For each $x\in \Z$ such that $xK\in (-2r,2r)$, the quantities in \eqref{not21} are the same as the respective quantities in \eqref{ssd1} with $m=M(x)$, where both dynamics use the same stack $\mathscr{I}_{*}$ (see \eqref{stack2}).  Moreover  $\eta^{SS}_{x}(M(x))=\widehat{\eta}_{x,\infty}$, i.e., the eventual distribution of the particles labeled $(x,\cdot)$ is the same in both processes.
\end{lemma}

Notice that the single site process clearly is just a function of $M(x)$ and the stack at $x$ i.e.,  $\left(\{\zeta_{(x,\cdot)}\}\right)$. Hence the above observation says that after the labeled system has fixated, the distribution of the particles labeled $(x,\cdot),$ and the other local statistics defined in \eqref{not21}, are just functions of the stack elements $\{\zeta_{(x,j)}:1\le j \le M(x)\}$ where $M(x)$  is the renormalized odometer value at $x$.

\section{Proof of Lemma \ref{key1}}
\label{s:proofkey}

 We start by describing the rough idea before providing formal arguments. 
Recall the event from \eqref{daejeon}, 
\begin{equation}\label{event12}
\A:=\left\{\max\{u_{*,[-2r,2r]}(-2r),u_{*,[-2r,2r]}(2r)\}=0\right\},
\end{equation}
and that the initial configuration $\hat{\eta}$ is supported on $[-r,r]\cap K \Z$ with $2\mu r\geq  |\hat{\eta}|\geq \frac{\mu r}{2}$. 

We  have by definition: 
\begin{lemma}\label{totalsleep}If $|\hat \eta|\ge \frac{\mu r }{2},$ on the event  $\A$, no particle hits $\{-2r,2r\},$ and hence at least $\frac{\mu r}{2}$ particles fall asleep inside $(-2r,2r).$
\end{lemma}

We shall need the following notation to keep track of the net flux of particles across lattice sites.  Namely for every $x \in 
\mathbb{Z},$ let 
\begin{align}\label{fluxnot1}
F^{+}_{x}&:= R(x \to) -L(x \leftarrow x+1) \\
\nonumber
F^{-}_{x} &:= L(\leftarrow x) -R(x-1 \rightarrow x). 
\end{align}
where the quantities on the RHS were defined in \eqref{not21}.
 Thus $F^{-}_{x}$ and $F^{+}_{x}$ denote  the net number of particles moving across the half-integer points  $xK-\frac{1}{2}$ and $xK+\frac{1}{2}$ respectively.  Since we assume that the total number of particles initially, i.e., $|\hat \eta|$ is at most $2\mu r$, it follows that $|F^+_{x}|$ and $|F^-_x|$ are both bounded by $2\mu r.$
Recall the renormalized odometer function $M(\cdot)$ from \eqref{e:reodo}. 
For a positive integer $a_0,$ and integers $f^{+}_0, f^{-}_0$, let $\mathcal{D}=\mathcal{D}(a_0,f^{+}_0,f^{-}_0)$ denote the event that the following equalities hold:

\begin{align}\label{initdata1}
M(0) =a_0,  F^{+}_{0} =f^{+}_0, F^{-}_{0} =f^{-}_0.
\end{align}

For a fixed triple $(a_0,f^{+}_0,f^{-}_0),$ we shall show that $\mathcal{D}\cap \A$ is unlikely enough such that Lemma \ref{key1} will then follow from a union bound over all possible values of different triples $(a_0,f^{+}_0,f^{-}_0)$. It turns out that we need to consider two cases, we start with the following case, where $M(0)=a_0$ is `small', i.e., $a_0<r^6$.  
The case when $M(0)>r^6$  is easy and follows from well known random walk estimates. 

The analysis of the first case is the most technically difficult part of this paper. Hence for the ease of reading we provide a detailed roadmap of the proof along with what various subsequent lemmas achieve.

Recall from \S~\ref{sop} about the general intuition behind the proof: Since the number of particles starting in $[-r,r]$ is assumed to be at least $\frac{\mu r}{2},$ to show that after the stabilization process, particles exit the interval $[-2r,2r]$ with high probability, it suffices to show that there are much less than $\frac{\mu r}{2},$ sleepy particles in $[-2r,2r].$ 
We consider the lattice points $K\Z=\{\ldots,-2K,-K,0,K,2K,\ldots\},$ for $K\gg \frac{1}{\mu}$ and then show at the end of the stabilization process, for $\lambda$ small enough, it is unlikely to have more than a particle asleep in any interval of length $K$. 
The reason being that, choosing $\lambda$ small enough makes it very unlikely for a particle to fall asleep before moving from a multiple of $K$ to another. 

Now it is easy to make the above intuition formal if the odometer function $M(\cdot)$ was specified. However for the purposes of our proof we have to do a careful union bound over various possible odometer values, and hence for any specific sequence of odometer values, we need to have  sharp enough probability upper bounds  on the event that there are many sleepy particles in the interval $[-2r,2r],$ which would enable us to carry out such a strategy. 
Below we give a step by step description of how our proof achieves the above.  In what follows we only control the sites on the positive integer axis and the negative axis can be handled similarly.

\begin{enumerate}
\item Given the value of the normalized odometer $M(0)<r^6$ at the origin and the flux $F_0^{+}=f_0^+$, (the net number of particles crossing the point $\frac{1}{2}$ in the positive direction), we make an estimation of $M(1)$ in the following way:  Using the single site dynamics at $0$ and the stack of random walks $(\zeta_{(0,i)})_{i=1}^{M(0)},$ we can compute $R_{0\to}^{SS}(M(0)),$ the net number of particles labelled $(0,\cdot)$ crossed $1/2$ which along with $f_0^+$ tells us exactly the value $\ell_1,$ of $L_{0\leftarrow 1}^{SS}(M(1)),$ the number of particles labelled $(1,\cdot)$ which hits $0.$
 \item We now look at  $L_{0\leftarrow 1}^{SS}(\cdot)$ as a function of the number of labelled particles emitted from the site $K.$ Clearly the function is monotone increasing (not strictly) with jump sizes bounded by $2K$ (this is proved in Lemma \ref{monojump}). Thus this creates the possibility,  that the function $L_{0\leftarrow 1}^{SS}(\cdot)$ never attains the value $\ell_1$. This suggests that our assumption on $M(0)$ and $F_0^+$ was not correct. 
 \item However on the other hand it might also happen that the function $L_{0\leftarrow 1}^{SS}(\cdot)$ attains the value $\ell_1$ on an entire interval $[M_*(1),  M_*(1)+w(1)].$ Now if $w(1)$ is zero, this implies that $M(1)=M_*(1).$ However if $w(1)>0,$ then there is no clear choice for $M(1).$ 
\item Suppose we made an arbitrary choice for $M(1)\in [M_*(1),  M_*(1)+w(1)]$. Using the stack at $K,$ now one can exactly compute the value $f_1^+$ of the flux $F^+_1,$ (see \eqref{ind2}) which then helps us compute again as above the value $\ell_2$ of  $L_{1\leftarrow 2}^{SS}(M(2))$.
\item We repeat the same argument as above to see that the value $\ell_2$ is either not attained or potentially attained on an entire interval  $[M_*(2),  M_*(2)+w(2)]$ and make an arbitrary choice of $M(2).$ Note that the interval $[M_*(2),  M_*(2)+w(2)]$ depends on our potentially arbitrary choice of $M(1)$ in the previous step. 
\item To keep our track of our string of arbitrary choices one dependent on the previous ones we we introduce a parameter $\underline{y}=\left\{ y(0)=0,y(1),y(2), \ldots , y(\frac{2r}{K}-1)\right\}.$ which is a sequence of non-negative integers (see \eqref{treepath}). The goal is to choose $M(i)=M_*(i)+y(i).$
Obviously as mentioned above, the goal can fail in a variety of ways, either $M_*(i)$ does not exist or $y(i)>w(i).$
\item We say a vector $\underline y$ is $\textbf{realizable}$, if all the above choices succeed. Note that whether $\underline y$ is $\textbf{realizable}$ or not depends on the underlying randomness in the stack (see \eqref{stack2}).
\item Note that given the stack of random walks and a $\textbf{realizable }\underline y,$ we can compute the number of particles which are asleep in the interval $[0,2r].$
\item Taking care of the negative interval $[-2r,0]$ exactly as above, we then prove the following key upper bound, (see \eqref{expmoment}), that 
\begin{equation}\label{prelimexp23}
\E \left(e^{\sum_{i=-2r/K+1}^{2r/K-1}[\tilde S_{i,i+1}+\tilde S_{i,i-1}]}\mathbf{1}[\mathcal{R}(\underline{y})\cap \mathcal{D}]\right)\le e^{2K}(.52)^{{\sum_{i=-2r/K+1}^{2r/K-1}y_i}},
\end{equation}
 where $\mathcal{R}(\underline{y})$ denotes the event that  $\underline y$ is $\textbf{realizable},$ and $\cD$ was defined above in \eqref{initdata1}.
The proof of this is achieved using Lemma \ref{probbound675}.

\item Assuming that the initial number of particles in $[-2r,2r]$ is at least $\frac{\mu r}{2},$ as mentioned in Lemma \ref{totalsleep}, on the event $\cA_r,$, we have
$$\sum_{i=-2r/K+1}^{2r/K-1}[\tilde S_{i,i+1}+\tilde S_{i,i-1}]\ge \frac{\mu r}{2}.$$ 
\item The proof then is complete by Markov inequality (Lemma \ref{quantbnd1}), and then summing the above bound over all possible choice of $\underline y,$ (Lemma \ref{l:smallodo}) followed by summing over all possible choices of $M(0)<r^6$ and the flux values $F^+_0$ and $F^-_0,$ both of which takes values bounded by the total number of particles initially on $[-2r,2r]$ which is by hypothesis at most $2\mu r.$ Note also that the number of terms in the sum in the exponent in \eqref{prelimexp23}, is $O(\frac{r}{K}).$ Thus by choosing $K$ large enough depending on $\mu,$ we can ensure that Markov's inequality indeed yields a bound of the form $e^{-cr}$ for some $c>0$ for all large $r.$
\item Finally we address the case $M(0)> r^6$ by showing that the odometer of the actual ARW dynamics is itself smaller than $r^{6}$ with probability at least $1-e^{-cr}$ (Lemma \ref{l:largeodo}). Now the renormalized odometer $M(0)$ is upper bounded by the odometer of the actual ARW dynamics (see Lemma \ref{o:ver1comp}) and hence we are done.  
\end{enumerate}

\subsection{$M(0)$ is small.}

\begin{lemma}
\label{l:smallodo}
Fix $(a_0,f^{+}_0,f^{-}_0)$ as above with $a_0\leq r^{6}$ and $|f_{0}^{+}|, |f_0^{-}|\leq 2\mu r$. There exists $K=K(\mu)$ sufficiently large, such that for $\lambda$ sufficiently small and for some constant $c=c(\mu,\lambda)>0$, for all large enough $r,$
\begin{equation}
\label{e:smallodo}
\sP_{*,\hat{\eta}}(\A\cap \mathcal{D}(a_0,f^{+}_0,f^{-}_0))\leq e^{-cr}.
\end{equation}
\end{lemma}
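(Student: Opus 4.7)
The plan is to reduce the estimate of $\mathscr{P}_{*,\widehat{\eta}}(\A \cap \mathcal{D})$ to properties of the single-site dynamics at each lattice site via Observation \ref{crucial}, and then to bound the total number of sleepy particles via a concentration argument. By Observation \ref{totalsleep}, on $\A$ at least $\mu r/2$ particles end up asleep in $(-2r, 2r)$. Decomposing this count by the first coordinate of the particles' labels yields $\sum_x n_x \geq \mu r/2$, where $n_x$ is the number of sleepy particles with first label $x$. By Observation \ref{crucial}, each $n_x$ is a measurable function of $M(x)$ and of the stack $\{\zeta_{(x,j)}\}_{j\in\N}$ alone, and is distributed, given $M(x) = m$, as the sleep count of the single-site dynamics at $x$ with $m$ initial particles.

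The core technical step I would establish is a uniform-in-$m$ exponential tail bound for the single-site sleep count: there exists an absolute $C > 0$ and $\delta = \delta(\lambda, K) \in (0, 1)$ with $\delta \to 0$ as $\lambda \to 0$ (for $K$ fixed), such that for every $x$ and every $m \geq 0$,
\[
\mathscr{P}_{*}\bigl(n_x^{SS}(m) \geq j\bigr) \leq C \delta^j, \qquad j \geq 1.
\]
The heuristic is that whenever a label-$x$ particle goes to sleep at some $y \in ((x-1)K, (x+1)K) \setminus \{xK\}$, any subsequent walk $\zeta_{(x,\cdot)}$ traverses $y$ before itself taking a lazy step with probability close to one: a lazy symmetric random walk from $xK$ to $\pm K$ completes its trip without taking any lazy step with probability roughly $\exp(-O(\lambda K^2))$, which is close to one provided $\lambda K^2 \ll 1$. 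Thus sleep events are efficiently undone by later activity, and $n_x^{SS}(m)$ is stochastically dominated by a geometric random variable whose parameter is independent of $m$. Making this rigorous uniformly in $m$ is the main obstacle of the proof, and I expect it to occupy the bulk of Section \ref{s:proofkey}.

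Granted this tail bound, the argument is completed as follows. Conditioning on the renormalized odometer vector $\vec{M} = (M(x))_{xK \in [-2r, 2r]}$ together with the per-site boundary statistics $\{L^{SS}_x, R^{SS}_x\}_x$, the variables $\{n_x\}$ become conditionally independent: the walks $\{\zeta_{(x,\cdot)}\}$ at different $x$ are disjoint portions of $\mathscr{I}_{*}$ and, under this richer conditioning, are subject only to per-site constraints matching the single-site dynamics. Since there are at most $4r/K$ relevant sites and each $n_x$ has conditional mean bounded by $C\delta/(1-\delta)$, we first choose $K$ large enough depending on $\mu$ and then $\lambda$ small enough that $C\delta/(1-\delta) \cdot 4r/K < \mu r / 8$; a standard Chernoff-type bound then gives
\[
\mathscr{P}_{*, \widehat{\eta}}\Big(\sum_{x} n_x \geq \mu r/2 \;\Big|\; \vec{M}, \{L^{SS}_x, R^{SS}_x\}_x\Big) \leq e^{-cr}
\]
for some $c = c(\mu, \lambda) > 0$. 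Integrating out the conditioning, and using the inclusion $\mathcal{D}(a_0, f_0^+, f_0^-) \subseteq \{M(0) = a_0\}$, yields $\mathscr{P}_{*, \widehat{\eta}}(\A \cap \mathcal{D}) \leq e^{-cr}$ as desired. The hypothesis $a_0 \leq r^6$ is not used in this particular estimate; rather, it controls the cardinality of the outer union bound over triples $(a_0, f_0^+, f_0^-)$ required to assemble Lemma \ref{key1}.
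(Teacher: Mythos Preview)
Your proposal has a genuine gap at the conditional-independence step. Once you condition on both $\vec{M}=(M(x))_x$ and the per-site boundary counts $L^{SS}_{x-1\leftarrow x}(M(x))$, $R^{SS}_{x\to x+1}(M(x))$, mass conservation in the single-site dynamics forces
\[
n_x \;=\; M(x)-L^{SS}_{x-1\leftarrow x}(M(x))-R^{SS}_{x\to x+1}(M(x)),
\]
so the $n_x$'s are \emph{deterministic} under that conditioning and there is nothing for a Chernoff bound to act on. If instead you condition only on $\vec{M}$, you lose the fresh-stack law you need: $M(x)$ depends on the stack at $x$ (through the circular relations $M(x\pm 1)=\hat\eta((x\pm1)K)+R^{SS}_{x\to x+1}(M(x))+\cdots$), so the conditional law of the stack at $x$ given $\{M(x)=m\}$ is not the product law, and your uniform tail bound $\mathscr{P}_*(n^{SS}_x(m)\ge j)\le C\delta^j$ does not transfer. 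Concretely, on $\A$ the boundary sites satisfy $M(\pm 2r/K)=0$; forcing the particle flow to die out before the boundary is precisely a constraint on the stacks that biases the nearby sleep counts upward, so the conditional means need not be small.

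The paper circumvents exactly this circular dependence. Rather than conditioning on the full odometer vector, it fixes $(a_0,f_0^\pm)$ and reveals the stacks sequentially outward from the origin via the filtration $\sF_j$. At each step, $\ell_{j}$ is $\sF_{j-1}$-measurable, and the admissible values of $M(j)$ form an interval $[M_*(j),M_*(j)+w(j)]$ parametrized by $y(j)$; the point is that $M_*(j)$ is a stopping time for the stack at $j$, so the walks $\zeta_{(j,M_*(j)+1)},\ldots$ are still fresh. This is what makes a bound of the shape $\E\bigl[e^{\tilde S_{j,j-1}+\tilde S_{j,j+1}}\mathbf{1}[\mathcal{R}(\underline y;j)\cap\mathcal{D}\cap\A]\,\big|\,\sF_{j-1}\bigr]\le e^{-c\,y(j)}$ possible, after which one sums over all sequences $\underline y$. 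Your heuristic that each $n_x$ is typically small for small $\lambda$ is correct and is indeed the mechanism behind \eqref{smallsleep}, but the decoupling has to be done through this stopping-time filtration, not by conditioning on the realized odometer.
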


\vspace{0.2in}
\begin{center}
\textbf{For the rest of this subsection, fix $(a_0,f^{+}_0,f^{-}_0)$ as in Lemma \ref{l:smallodo}. Recall also that $\hat{\eta}$, the initial configuration of active particles, supported on $K\Z \cap [-r,r]$  has already been fixed.}
\end{center}
\vspace{0.3 in}
Given the stack at $0$, (i.e.\ $\{\zeta_{(0,j)}:j\in \N\}$) using Lemma \ref{crucial} and the fact that $M(0)=a_0$, one can compute the value of $R(0\to)$ (say $r_0$). Since $F^{+}_{0}=f^+_0,$ by definition  \eqref{fluxnot1}, the value of $L(0\leftarrow 1)$ is 
\begin{equation}\label{fluxcond1}
\ell_{1}=r_0-f^+_0.
\end{equation}

Now $\ell_1$, together with the stack at $K$, i.e., $\{\zeta_{1,j}:j\in \N\}$, can be used to calculate $M(1)$ up to some (typically small) error. In general, given $\ell_{x}$, one can work out the value of $M(x)$ up to some error. Let $\ell^{SS}_{x}(\cdot):=L^{SS}_{x-1\leftarrow x}(\cdot) $ where the latter was defined in \eqref{ssd1}. We have the following lemma.
\begin{lemma}\label{monojump}
For any $x\in \Z,$ and every realization of the stack  at $xK,$ (i.e.,\ $\{\zeta_{(x,j)}:j\in \N\}$, (see \eqref{stack2} ) the function $\ell^{SS}_x(\cdot)$ is a non decreasing function with jump size at most $2K$. 
\end{lemma}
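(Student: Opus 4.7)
My plan is to compare the $(m+1)$-particle single-site dynamics with the $m$-particle one by reformulating the former as a two-stage procedure. In Stage~1 I would run the standard single-site Labeled ARW on $V_x=((x-1)K,(x+1)K)$ with only the first $m$ particles $(x,1),\dots,(x,m)$ to stabilization, using walks $\zeta_{(x,1)},\dots,\zeta_{(x,m)}$. In Stage~2 I would place the $(m+1)$-th particle at $xK$ (with its walk $\zeta_{(x,m+1)}$), instantly waking any sleeper it encounters at $xK$, and continue toppling under the smallest-label-first rule until the system stabilizes again. The first key claim is that this two-stage procedure produces exactly the same final configuration --- and in particular the same value of $\ell_x(m+1)$ --- as the proper $(m+1)$-particle single-site dynamics.

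Granting this equivalence, both statements of the lemma follow quickly. For monotonicity, Stage~1 already absorbs $\ell_x(m)$ particles at $(x-1)K$, and since $(x-1)K\notin V_x$ those particles are frozen forever; Stage~2 only introduces additional topplings and cannot remove any particle from $(x-1)K$, so $\ell_x(m+1)\ge \ell_x(m)$. For the jump bound, the particles that can newly contribute to $\ell_x$ in Stage~2 are precisely the fresh particle $(x,m+1)$ together with the sleepers remaining at the end of Stage~1 that get re-activated (either directly by $(x,m+1)$ or via a cascade of wake-ups). Since the Labeled ARW rule forbids two particles with first coordinate $x$ to be simultaneously asleep at the same site, and $V_x$ contains $2K-1$ integer sites, there are at most $2K-1$ such sleepers. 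Each of these newly active particles contributes at most $1$ to $\ell_x$, so $\ell_x(m+1)-\ell_x(m)\le (2K-1)+1=2K$.

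The core technical step --- and the one I expect to be the main obstacle --- is proving the equivalence of the two descriptions. Because the Labeled ARW is not in general abelian, one cannot appeal to Lemma~\ref{abp} directly. Instead I plan a step-by-step synchronization argument: the only moment the two dynamics can diverge is when some particle $(x,j)$ with $j\le m$ is at $xK$ and the next instruction in $\zeta_{(x,j)}$ is lazy. In the proper dynamics, the still-unused particle $(x,m+1)$ sits at $xK$ and blocks the sleep, so the lazy step is simply consumed without effect; in the staged description, the particle does fall asleep, but is reactivated at the start of Stage~2 when $(x,m+1)$ is inserted at $xK$, after which it resumes with the very next step of $\zeta_{(x,j)}$. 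In both scenarios the same step of $\zeta_{(x,j)}$ is consumed and the particle re-enters the smallest-label-first queue at $xK$ with the same pending instruction, so an induction on the total number of topplings shows the two dynamics terminate in identical final configurations.
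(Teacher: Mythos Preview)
Your proposal is correct and follows essentially the same route as the paper. The paper's proof also asserts the decomposition $\ell_x(j+1)=\ell_x(j)+\ell_*$, where $\ell_*$ counts the additional particles reaching $(x-1)K$ when one starts from the stabilized configuration $\eta_x^{SS}(j)$, drops in the fresh particle $(x,j+1)$ at $xK$, and re-stabilizes on $((x-1)K,(x+1)K)$ using the remaining stack elements; it then reads off monotonicity ($\ell_*\ge 0$) and the jump bound ($\ell_*\le |\eta_*|\le 2K$) exactly as you do. The only difference is that the paper states the two-stage equivalence as self-evident, whereas you supply the step-by-step synchronization argument explaining why the presence or absence of $(x,m+1)$ at $xK$ does not alter the final outcome --- a genuine improvement in rigor over what the paper leaves implicit.
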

\begin{proof}
Notice that  by definition for $j\geq 0$, we have  $\ell^{SS}_x(j+1)\ge \ell^{SS}_x(j).$ This is because by the toppling rule of the single site dynamics in \S~\ref{ssd}, always the active particle with the least label gets toppled. Thus before the particle labelled $(x,j+1)$ gets emitted, all the particles with smaller labels are either asleep in the interval $((x-1)K, (x+1)K)$ (at most $2K-1$) or have hit $\{(x\pm1)K\},$ and hence at this point the number of particles to hit $(x-1)K$ is precisely $\ell^{SS}_x(j)$. Since the emission of the particle $(x,j+1)$ can lead to other sleepy particles to wake up and hit $(x-1)K$, 
the above inequality follows.
Moreover, as the total number of particles at this stage, i.e. $|\eta^{SS}_{x}(j)|+1$ (the additional one is due to the particle $(x,j+1)$) and by the above discussion, $|\eta^{SS}_{x}(j)|\le 2K-1,$ the bound on the jump size of the process $\ell^{SS}_x(\cdot)$ follows. 
\end{proof}
Now by Lemma \ref{crucial}, $\ell_1=\ell^{SS}_1(M(1)).$
However since the function $\ell^{SS}_1(\cdot),$ has jumps, it might turn out that for some realization of the stack $\{\zeta_{(1,j)}:j\in \N\}$, there does not exist any $j$ such that  $\ell_1^{SS}(j)=r_0-f^+_0$ and hence $\mathcal{D}(a_0,f_0^{+},f_0^{-})$ cannot hold. On the other hand, since the function $\ell^{SS}_1(\cdot)$ is not strictly increasing, it might also happen that $\ell^{SS}_1(j)=r_0-f^+_0,$ for more than one value of $j.$
In that case, let 
\begin{equation} \label{inverse}
[M_*(1),M_*(1)+w(1)],
\end{equation}
for some positive integer $M_*(1)$ and $w(1)\geq 0,$ be the interval of all values $y$ such that   $\ell^{SS}_{1}(y)=r_0-f^+_0.$ In this case, $M(1)$ cannot be determined just by 
$\mathcal{D}(a_0,f_0^{+},f_0^{-})$ and the stacks at $0$ ($\{\zeta_{0,\cdot}\}$) and at $K$ ($\{\zeta_{K,\cdot}\}$) as it can be any number in the interval $[M_*(1),M_*(1)+w(1)]$.

We now formally carry out the strategy outlined in the roadmap.

\begin{definition}\label{sleepnot}
Let $\tilde S_{i,i+1}(j)$ be the number of labeled sleepy particles in the configuration $\eta^{SS}_{x}(j)$ (by definition all of them have label $(i,\cdot)$) between $(Ki,K(i+1))$ and similarly $\tilde S_{i,i-1}(j)$ denotes the number of sleepy $(i,\cdot)$ labeled particles between $(K(i-1),K(i)].$  Also following Lemma \ref{crucial} let $$\tilde S_{i,i+1}:=\tilde S_{i,i+1}(M(i)) \mbox{ and } \tilde S_{i,i-1}:=\tilde S_{i,i-1}(M(i)),$$ 
be the $(i,\cdot)$ labeled sleepy particles in $\hat{\eta}_{x,\infty}$.
\end{definition} 

Fix a sequence of non negative integers, 
\begin{equation}\label{treepath}
\underline{y}=\left\{ y(-\frac{2r}{K}+1), \ldots, y(-2),y(-1),y(0)=0,y(1),y(2), \ldots , y(\frac{2r}{K}-1)\right\}.
\end{equation}
\textbf{All the subsequent definitions will implicitly depend on the sequence $\underline y$ and we will not mention it every time. Also for brevity we will choose to suppress this dependence, in the notation. Since $\underline y$ will be fixed, this should not create any scope for confusion.}

Given the triple $(a_0,f^{+}_0,f^{-}_0),$ we now define a filtration $\{\mathscr{F}_i\}_{i\ge 0} $ generated by the variables $\zeta_{(\cdot,\cdot)}$ (also depending on the initial particle configuration $\hat{\eta}$) as follows: For convenience of reading, we shall denote the filtration by the set of random variables that generate it. We start by defining,
\begin{align}\label{fil1}
\sF_0 :=\{\zeta_{(0,1)}, \zeta_{(0,2)},\ldots \zeta_{(0,a_0)}\}. 
\end{align} 

We now recursively define the potential values $f^{+}_i, f^{-}_i$ of the functions $F^+_{i}, F^-_{i}$ respectively etc. These values will be obtained as a deterministic function of the filtration $\sF_i$ (yet to be fully defined). For $i\geq 1$, assume $\ell_1,\ldots , \ell_i$; $a_1,\ldots , a_i$ and $f_0^+,\ldots, f_{i-1}^+$ have already been defined (note that $\ell_1$ and $f_0^+$ have already been defined). Let
\begin{equation}
\label{ind1}
\ell_{i+1}:=R^{SS}_{i\to}(a_{i})-f^+_i
\end{equation}

We now define the filtration $\sF_i$ recursively for $\underline{y}$ such that $y(0),y(1),\ldots ,y(i)$ is \textbf{realizable} (the definition of realizable is also recursive and defined below) . Assume $\sF_0,\ldots \sF_i$,  have been defined. Note that $\ell_{1}$ will be a deterministic function of $\sF_0,$ (given the other fixed data) and similarly $\ell_{i+1}$ will be a deterministic function of $\sF_i,$ (see below)  Analogous to \eqref{inverse}, define $M_*(i+1)$ and $w(i+1)$ by 
\begin{equation}\label{pos45}
M_*(i+1)=\min\{j\geq 0: \ell_{i+1}^{SS}(j)=\ell_{i+1}\};~~ M_*(i+1)+w(i+1)=\max\{j\geq 0: \ell_{i+1}^{SS}(j)=\ell_{i+1}\}
\end{equation}
if such numbers exist. Call the sequence $\{y(0),y(1),\ldots y(i+1)\}$ \textbf{realizable} if (a) either $i=0$ or $\{y(0),y(1),\ldots y(i)\}$ is \textbf{realizable} and (b) $M_{*}(i+1)$ and $w(i+1)$ as above exist and $y(i+1)\leq w(i+1)$. In such a situation define 
\begin{equation}
\label{ind3}
a_{i+1}=M_*(i+1)+y(i+1).
\end{equation}
For a realizable sequence $\{y(0),y(1),\ldots y(i+1)\}$, define  $$\sF_{i+1}:=\sF_i \bigcup \{\zeta_{(i+1,1)}, \zeta_{(i+1,2)},\ldots \zeta_{(i+1,a_{i+1})}\}.$$
Now, by definition $R^{SS}_{i+1 \to }(a_{i+1}),\tilde S_{i+1,i+2}(a_{i+1})$ are $\sF_{i+1}$ measurable. 
Thus as mentioned above the quantity $\ell_{i+1}$ is $\sF_{i}$ measurable.

We now define the potential value of the flux $F^+_{i+1}$ to be 
\begin{equation} 
\label{ind2}
f^+_{i+1}:=\biggl|\hat{\eta}_{(iK+1,(i+1)K]}\biggr| + f^+_i-\tilde S_{i,i+1}(a_i)-\tilde S_{i+1,i}(a_{i+1});
\end{equation}
where $|\hat{\eta}_{B}|$ denote the number of particles of $\hat{\eta}$ contained in $B\subseteq \Z$ and the notation $S_{i,i+1}(\cdot)$ for sleepy particles was defined in Definition \ref{sleepnot}. 
That the above definition is natural, is immediate by observing that 
$f^+_{i}-f^+_{i+1}$ is the net number of particles coming in to the interval $$(Ki+\frac{1}{2},K(i+1)+\frac{1}{2}).$$ The latter quantity is the same as the final number of sleepy particles ($\tilde S_{i,i+1}(a_i)+\tilde S_{i+1,i}(a_{i+1})$) minus the initial number of particles,  $\left|\hat{\eta}_{\bigl(iK+1,(i+1)K\bigr]}\right|,$ (note that the interval is semi-closed as we take in to account the site $(i+1)K$).
That  $f^+_{i+1}$ is $\sF_{i+1}$ measurable follows from definition. 

The above discussion, specifically \eqref{ind1}, \eqref{ind2} and \eqref{ind3} describe the construction of the filtration  $\sF_i$ for all $i\ge 0$ as long as the realizable conditions in \eqref{ind3} are met. 
Now for $i<0$, we construct in the exactly similar way a filtration $\sF_{i}$, and a sequence of functions $M_*(i)$, $w(i)$, $a_i$, $r_{i}$ and $f^{-}_{i}$, recursively going from right to left and starting with $(a_0,f^{-}_{0})$. The sequence $\{y(i),\ldots ,y(0)\}$ is called realizable if $y(i)\leq w(i)$. 

Finally, we call $\underline{y}$ realizable if $$\{ y(-\frac{2r}{K}+1), \ldots, y(-2),y(-1),y(0)\} \mbox{ and } \{y(0),y(1),y(2), \ldots , y(\frac{2r}{K}-1)\}$$ are both realizable.

We now have the following lemma whose proof follows by definition:
\begin{lemma}
\label{yexist}
On $\mathcal{D}(a_0,f^+_0,f^{-}_0)\cap \A$, there exists a (unique) $\underline{y}$ as in \eqref{treepath} such that $\underline{y}$ is realizable and $M(i)=M_*(i)+y(i)$ for each $i$. Further for such an $\underline{y}$, we have 
$L(j-1\leftarrow j)=\ell_j$ and $F_j^{+}=f^{+}_{j}$ for all $j\geq 0$ where 
$\ell_j$ and $f_j^{+}$ are defined as in \eqref{ind1} and \eqref{ind2} respectively.
\end{lemma} 
\begin{proof} By definition on $\mathcal{D}(a_0,f^+_0,f^{-}_0)\cap \A,$ recursively according to the definitions in \eqref{pos45}, for any $i,$ the renormalized odometer values at $M(i)$ must belong to the interval $[M_*(i+1),M_*(i+1)+w(i+1)],$ (note that this in particular implies that the interval $[M_*(i+1),M_*(i+1)+w(i+1)]$ must exist, as discussed right after \eqref{inverse}). Thus the lemma follows by inductively defining $$y_i=M(i)-M_*(i).$$ The statements about $F_j^{+}$ and $L(j-1\leftarrow j)$ follow directly from definitions. 
\end{proof}
It might be instructive to think of the sequence $\underline y$ as a path in a tree of possibilities of the odometer values at various points, out of which the process picks only one. 
\begin{figure}[hbt]
\includegraphics[scale=.7]{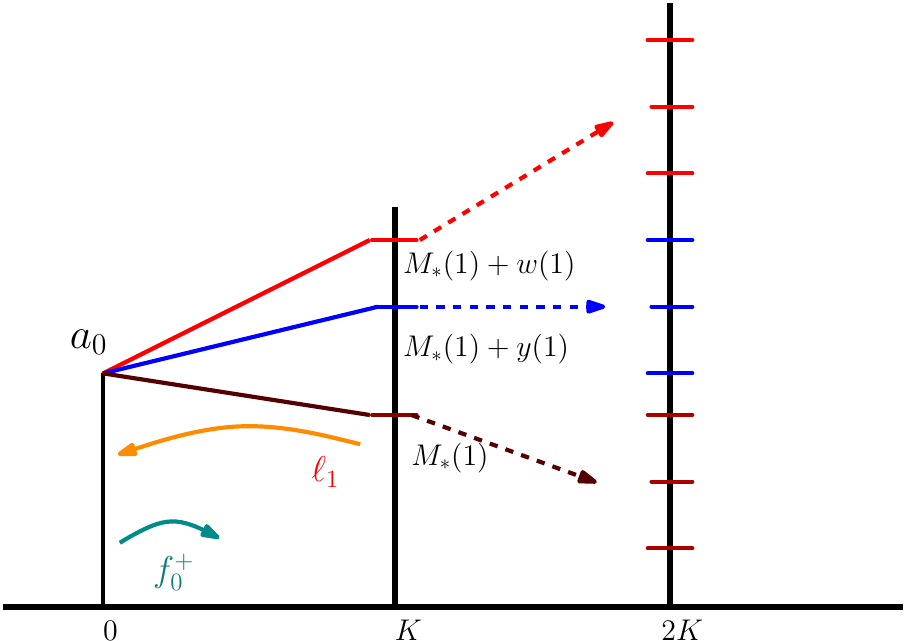}
\caption{The paths of various colors denote the different realizable sequences.}
\label{tree1}
\end{figure}

\begin{definition}\label{realizable}
Let $\mathcal{R}(\underline{y})$ be the event that $\underline{y}$ is realizable and $M(i)=M_*(i)+y_i$ for all $i$. Also for $j>0$ (resp.\ $j<0$), let $\mathcal{R}(\underline{y};j)$ denote the event that $\{y(0),\ldots y(j)\}$ is realizable (resp.\ $\{y(j),\ldots ,y(0)\}$ is realizable) and $M(i)=M_*(i)+y_{i}$ $\, \forall\, i\in \{0,1,\ldots , j\}$ (resp.\ $\, \forall \,i\in \{j,j+1,\ldots ,0\}$). 
\end{definition}
Recalling the event $\cD=\mathcal{D}(a_0,f^+_0,f^{-}_0)$
we now state for any $\underline{y}$ and the triple $(a_0,f^+_0,f^{-}_0)$ as in Lemma \ref{l:smallodo}, a quantitative upper bound on the probability of the event $\mathcal{R}(\underline{y})\cap \mathcal{D}\cap \A$. Also recall that the initial configuration $\hat \eta,$ satisfies $|\hat \eta|\ge \frac{\mu r}{2}.$
\begin{lemma}
\label{quantbnd1}
For any $K$ sufficiently large depending on $\mu$, there exists $\lambda_K$ sufficiently small, such that for $\lambda\le \lambda_K$,  and  $\underline{y}$ as in \eqref{treepath}, 
$$\sP_{*,\hat{\eta}}( \mathcal{R}(\underline{y})\cap \mathcal{D}\cap \A)\le  e^{-\mu r/4} e^{2K} \left((.52)^ {\sum_{i=-2r/K+1}^{1}y_i}+(.52)^ {\sum_{i=1}^{2r/K-1}y_i}\right).$$
\end{lemma}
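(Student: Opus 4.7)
The plan is to decompose $\mathscr{P}_{*,\widehat{\eta}}\bigl(\mathcal{R}(\underline{y})\cap \mathcal{D}\cap \A\bigr)$ site-by-site using the filtration $\{\mathscr{F}_i\}_{i\ge 0}$ together with its mirror for $i\le 0$, exploiting the independence across $i$ of the stacks $\{\zeta_{(i,\cdot)}\}$. On $\mathcal{R}(\underline{y})\cap\mathcal{D}\cap \A$, two inputs are extracted. First, by Observation \ref{yexist} together with the construction of $\mathcal{R}(\underline{y})$, for each $i$ the first $a_i = M_*(i) + y(i)$ walks $\zeta_{(i,1)},\ldots,\zeta_{(i,a_i)}$ at site $iK$ yield a single-site configuration in which $y(i)+1$ consecutive values of $\ell_i(\cdot)$ coincide with $\ell_i$; equivalently, the last $y(i)$ additions at that site produce zero net leftward flux. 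Second, by Observation \ref{totalsleep}, the total number of labeled sleepy survivors in the final configuration satisfies $\sum_i s_i \geq \mu r/2$, where $s_i := \tilde S_{i,i+1}(a_i) + \tilde S_{i,i-1}(a_i)$.

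For each $i$, I would bound the conditional probability given $\mathscr{F}_{i-1}$ (for $i > 0$; symmetrically for $i < 0$) of the simultaneous requirements imposed at site $iK$ by a product of two factors. The first factor, of the form $q^{y(i)}$ with some $q<1$, handles the $y(i)$ consecutive no-flux-change events: conditional on the single-site configuration $\eta^{SS}_i(j)$ carried over from the previous walks, a symmetry/reflection argument for the killed lazy walk $S_{K,\lambda}$ in $(-K,K)$ shows that the probability that one more walk leaves $\ell_i$ unchanged is bounded away from $1$. The second factor, of the form $(C_K\lambda)^{s_i}$, handles the production of $s_i$ surviving sleepy particles: each sleepy survivor is a distinct effective lazy step in one of the $\zeta_{(i,\cdot)}$, and since each step of $S_\lambda$ is lazy independently with probability $\lambda/(\lambda+1)$, a binomial tail together with an $O_K(1)$ bound on the relevant walk lengths yields the estimate.

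Multiplying these per-site bounds over $i \in [-2r/K+1,\, 2r/K-1]$ and using independence of the stacks across distinct lattice points gives
\[
\mathscr{P}_{*,\widehat{\eta}}\bigl(\mathcal{R}(\underline{y})\cap \mathcal{D}\cap \A\bigr) \leq q^{\sum_i y(i)} \cdot (C_K\lambda)^{\sum_i s_i}.
\]
Taking $\lambda$ small enough (depending on $K$) that $C_K\lambda \leq e^{-1}$, together with $\sum_i s_i \geq \mu r/2$, converts the sleepy-particle factor into at most $e^{-\sum_i s_i}\le e^{-\mu r/2}$, while the flux factor becomes $e^{-c\sum_i y(i)}$ with $c = -\log q > 0$. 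The remaining $e^{2K}$ absorbs $r$-independent overhead, in particular the treatment of the two sites adjacent to $\pm 2r$, where the boundary constraint built into $\A$ replaces the generic single-site analysis, and the combinatorial cost of specifying the seed data $(a_0,f_0^+,f_0^-)$ at site $0$ that defines $\mathcal{D}$.

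The principal obstacle is justifying the uniform bound $q<1$ in the first factor: regardless of the carried-over configuration $\eta^{SS}_i(j)$, the probability that the $(j{+}1)$-st walk changes $\ell_i$ by $0$ must be bounded away from $1$. When $\eta^{SS}_i(j)$ has no labeled-$(i,\cdot)$ sleepy particle in $\bigl((i-1)K,(i+1)K\bigr)$, the released particle does an unobstructed killed lazy walk from $iK$ and hits the two endpoints with equal probability by left/right symmetry, giving $q\leq \tfrac12 + o_\lambda(1)$ directly. When pre-existing sleepy particles create cascading wake-ups, a coupling or last-exit decomposition that reflects excursions across $iK$ is needed to compare total leftward and rightward outflows and extract a $q$ bounded away from $1$ uniformly in $\eta^{SS}_i(j)$; the technical prerequisites for this are precisely what Section \ref{s:prelim} would need to supply.
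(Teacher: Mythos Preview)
Your site-by-site filtration decomposition is the right architecture, and the $q^{y(i)}$ factor is correct (though your worry about cascading wake-ups is unnecessary: if $\zeta_{(j,k)}$ is a \emph{left instruction}, the added particle itself reaches $(j-1)K$ regardless of what it wakes up, so $\ell_j$ strictly increases; hence $\P(\ell_j\text{ unchanged at step }k)\le \P(\text{not a left instruction})\le 0.51$ uniformly in the carried-over configuration, with no reflection argument needed). The genuine gap is your sleepy-particle factor $(C_K\lambda)^{s_i}$. Your justification invokes an ``$O_K(1)$ bound on the relevant walk lengths'', but this is false: the number of walks used at site $i$ is $a_i$, which in the regime of Lemma~\ref{l:smallodo} can be as large as $r^6$, and each walk has random length of order $K^2$. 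The total pool of potential lazy steps is therefore enormous, and having $s_i$ lazy steps in it carries no cost in $\lambda$; what is rare is that $s_i$ of them \emph{survive} all subsequent wake-ups, and this survival probability is not governed by any simple binomial tail.

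The paper handles this quite differently. It never bounds $\P(s_i\text{ sleepy survivors})$; instead it bounds the exponential moment $\E\bigl[e^{\tilde S_{j,j-1}+\tilde S_{j,j+1}}\,\mathbf{1}(\mathcal{R}(\underline{y};j)\cap\mathcal{D}\cap\A)\mid \sF_{j-1}\bigr]$ directly. The key device is a stopping time $\tau_1$ chosen so that $\ell_j(\tau_1)$ is within $O_K(1)$ of its target value $\ell_j$; one then shows that with probability at least $1-O(\e)$ the next $O_K(1)$ walks after $\tau_1$ contain both a left and a right instruction and no lazy steps, so every carried-over sleepy particle is cleared out and none is created, forcing $\tilde S_{j,j-1}+\tilde S_{j,j+1}=0$ throughout the window containing $M_*(j)$. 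On the complementary event of probability $O(\e)$ one uses the crude bound $e^{2K}$, and $\e$ is then chosen small enough to absorb this. Markov's inequality against $\sum_i s_i\ge \mu r/2$ on $\A$ finally produces the $e^{-\mu r/2}$. (Incidentally, the $e^{2K}$ in the statement is not boundary overhead or seed-data cost as you suggest --- the triple $(a_0,f_0^+,f_0^-)$ is fixed throughout this lemma --- but simply the trivial bound $\tilde S_{0,1}+\tilde S_{0,-1}\le 2K$ at the single site $j=0$ where the conditional estimate is not run.)
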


Fix $\underline{y}$ as in \eqref{treepath}. To prove Lemma \ref{quantbnd1}, under the same hypothesis as the latter, we prove the following bound on the exponential moment of the number of sleepy particles:
\begin{lemma}\label{expmomentlem2}
\begin{equation}\label{expmoment}
\E \left(e^{\sum_{i=0}^{2r/K-1}[\tilde S_{i,i+1}+\tilde S_{i,i-1}]}\mathbf{1}[\mathcal{R}(\underline{y})\cap \mathcal{D}]\right)\le e^{2K}(.52)^{{\sum_{i=0}^{2r/K-1}y_i}}.
\end{equation}
A similar bound for the negative axis holds.
\end{lemma}
Before proceeding further we quickly finish the proof of Lemma \ref{quantbnd1} using the above.
\begin{proof}[Proof of Lemma \ref{quantbnd1}]
Observe that on the event $\mathcal{D}\cap \A,$ trivially either 
\begin{align}\label{lowerbnd123}
e^{\sum_{i=0}^{2r/K-1}[\tilde S_{i,i+1}+\tilde S_{i,i-1}]} \ge e^{\mu r/4}\text{ or }\,\,
e^{\sum_{i=-2r/k+1}^{0}[\tilde S_{i,i+1}+\tilde S_{i,i-1}]} \ge e^{\mu r/4},
\end{align}
(follows from the observation recorded as  Lemma \ref{totalsleep}).
Thus using \eqref{expmoment} and \eqref{lowerbnd123} we get 
\begin{align}\label{appl567}
e^{\frac{\mu r}{4}}\sP_*(\mathcal{R}(\underline{y})\cap \mathcal{D}\cap \A) &\le \E \left(e^{\sum_{i=0}^{2r/K-1}[\tilde S_{i,i+1}+\tilde S_{i,i-1}]}\mathbf{1}\bigl[\mathcal{R}(\underline{y})\cap \mathcal{D}\bigr]\right)\\
&+ \E \left(e^{\sum_{i=-2r/K+1}^{0}[\tilde S_{i,i+1}+\tilde S_{i,i-1}]}\mathbf{1}\bigl[\mathcal{R}(\underline{y})\cap \mathcal{D}\bigr]\right),\\
\nonumber
&\le e^{2K}\left((.52)^ {\sum_{i=-2r/K+1}^{1}y_i}+(.52)^ {\sum_{i=1}^{2r/K-1}y_i}\right).
\end{align}
\end{proof}

We now proceed toward proving \eqref{expmoment}. We shall prove the following lemma.
\begin{lemma}\label{induction675}
For all $i \in \{1,\ldots ,2r/K-1\}$: 
\begin{equation}\label{indbound}
\E\left[e^{[\tilde S_{i,i+1}+\tilde S_{i,i-1}]}\mathbf{1}\bigl[\mathcal{R}(\underline{y}; i)\cap \mathcal{D} \bigr] \mid \sF_{i-1}\right]\le (.52)^{y_i}\mathbf{1}\bigl[\mathcal{R}(\underline{y}; i-1)],
\end{equation}
and similarly for $i\in \{-2r/K+1, \ldots , -1\}$,
\begin{equation}\label{indboundneg}
\E\left[e^{[\tilde S_{i, i+1}+\tilde S_{i, i-1}]}\mathbf{1}\bigl[\mathcal{R}(\underline{y}; i)\cap \mathcal{D} \bigr] \mid \sF_{i+1}\right]\le (.52)^{y_i}\mathbf{1}\bigl[\mathcal{R}(\underline{y}; i+1)].
\end{equation}
\end{lemma}

Assuming the above, we quickly finish the proof of \eqref{expmoment}.
\begin{proof}[Proof of Lemma \ref{expmomentlem2}]
Since for $i>0,$ $\cR(\underline y;i)$ and $\tilde S_{i,i+1}+\tilde S_{i,i-1}$ are $\sF_{i}$ measurable and a similar statement holds for $i<0$. Using the  above bounds,
and induction on $i$, \eqref{expmoment} follows. 
Fixing $i=2r/K-1,$ formally notice that, 
\begin{align*}
&\E \left(e^{\sum_{j=0}^{2r/K-1}[\tilde S_{j,j+1}+\tilde S_{j,j-1}]}\mathbf{1}[\mathcal{R}(\underline{y})\cap \mathcal{D}]\right)\\
&= \E \left(e^{\sum_{j=0}^{2r/K-2}[\tilde S_{j,j+1}+\tilde S_{j,j-1}]}\mathbf{1}[\mathcal{R}(\underline{y}; 2r/K-2)\cap \mathcal{D}]\E\biggl[e^{[\tilde S_{i,i+1}+\tilde S_{i,i-1}]}\mathbf{1}[\mathcal{R}(\underline{y}; i)\cap \mathcal{D}]  \mid \sF_{i-1}\biggr]\right)
\\
&\overset{\eqref{indbound}}{\le}\E \left(e^{\sum_{j=0}^{2r/K-2}[\tilde S_{j,j+1}+\tilde S_{j,j-1}]}\mathbf{1}[\mathcal{R}(\underline{y}; 2r/K-2)\cap \mathcal{D}]\right)(.52)^{y_i}\\
&\le e^{2K}(.52)^{{\sum_{i=0}^{2r/K-1}y_i}}.
\end{align*}
The first equality follows by noticing that the first term in the product i.e., $$e^{\sum_{j=0}^{2r/K-2}[\tilde S_{j,j+1}+\tilde S_{j,j-1}]}\mathbf{1}[\mathcal{R}(\underline{y}; 2r/K-2)\cap \mathcal{D}],$$ is $\sF_{i-1}$ measurable and the last step follows by induction.
We naively bound $\tilde{S}_{0,1}+\tilde{S}_{0,-1},$ by ${2K}$  (since at most $2K$  particles with first label $0$ can fall asleep in the interval $(-K,K)$ ).  This accounts for the extra $e^{2K}$ term in \eqref{expmoment}.
 \end{proof}
 Note that from the bound in \eqref{expmoment} by summing over $\underline y$ and all the possible values of $M(0)\le r^6,$ and $|f^+_0|,|f^-_0|\le 2\mu r$ as in Lemma \ref{l:smallodo}, we get  for all large enough $r,$
\begin{align}\label{expboun768}
\E \left((e^{\sum_{i=-2r/K+1}^{0}[\tilde S_{i,i+1}+\tilde S_{i,i-1}]}+e^{\sum_{i=1}^{2r/K-1}[\tilde S_{i,i+1}+\tilde S_{i,i-1}]})\mathbf{1}[M(0)\le r^6]\right)\le e^{2K}e^{cr}
\end{align} 
 where $c=c(K)$ can be made arbitrarily small by choosing $K$ large enough. \footnote{It is worth noting that if  $S_1, S_2$ denote the total number of sleepy particles in $[-2r ,0]$ and $[0,2r]$ respectively after stabilization, by definition and the Abelian Property, it follows that 
$S_1 \le \sum_{i=-2r/K+1}^{0}[\tilde S_{i,i+1}+\tilde S_{i,i-1}],$ and a similar bound is true for $S_2.$ Thus the above along with the fact that $M(0)\le u_{[-2r,2r]}(0),$ (see Lemma \ref{l:largeodo}) implies that, for any $c>0,$ by choosing 
$\lambda$ small enough, for all large enough $r,$
 \begin{align}\label{expboun7698}
\E \left((e^{S_1}+e^{S_2})\mathbf{1}[u_{[-2r,2r]}(0)\le r^6]\right)\le e^{cr}. 
\end{align} }

  We now proceed to proving  \eqref{indbound}. The proof of \eqref{indboundneg} will follow similarly. 
However we need some notational preparation: recall that the random walk paths $\zeta_{(\cdot,\cdot)}$ in \eqref{stack2} are lazy (with parameter $\frac{\lambda}{1+\lambda}$) stopped on hitting the nearest renormalized lattice point (multiple of $K$) to the left or to the right. By symmetry each random walk path has probability $1/2$ of hitting either neighbour. Call a random walk path $\zeta_{(x,\cdot)}$ a \textbf{left instruction} (resp.\ \textbf{right instruction}) if it is killed at hitting $(x-1)K$ (resp.\ $(x+1)K$) before taking any lazy step. Clearly for any $\chi>0,$ and $\lambda=\lambda(K,\chi)$ sufficiently small, 
\begin{align}
\label{probbound342}
\sP_{*}(\zeta_{(x,\cdot)}~\text{is a \bf{left instruction}})&\geq \frac{1}{2}-\chi \\
\nonumber
\sP_{*}(\zeta_{(x,\cdot)}~\text{is a \bf{right instruction}})&\geq \frac{1}{2}-\chi.
\end{align}
Fix any $\e>0$. Let $R$ be large enough such that 
\begin{equation}\label{def978}
\sP_{*}(\zeta_{(x,j)}~\text{is a {\bf{right instruction}} for some } 1\le j \le R )\geq 1-\e/8.
\end{equation}
 For every $x\in \Z$, given the value $\ell_{x}$ (see \eqref{ind1} and note that it is measurable with respect to $\sF_{x-1}$), we define the following stopping times measurable with respect to the filtration
\begin{equation}\label{localfil}
\sG_{x}(j)=\{\zeta^{(j)}_{(x,1)},\ldots \zeta^{(j)}_{(x,j)}\}
\end{equation}
where $\zeta^{(j)}_{(x,\cdot)}$ denotes the part of the random walk trajectory $\zeta_{(x,\cdot)}$ used in running the single site dynamics starting with $j$ active particles at $xK$. 

Recall $\ell^{SS}_{x}(\cdot)$ from Lemma \ref{monojump} and for $R$ as above, define
\begin{align}\label{stop1}
\tau_1 :=\tau_1(x) := \inf\{j: \ell^{SS}_{x}(j) \ge \ell_x -3KR \}.
\end{align}
Let  $\tau_2=\tau_1+R$. Also let
$$ \tau_3=\inf\{j: \ell^{SS}_{x}(j)-\ell^{SS}_{x}(\tau_1) \ge 3KR \}$$ i.e., $\tau_3$ is the first time after $\tau_1$ such that there are at least  $3KR$ many $\textbf{left instruction}$s among the random walk paths $\zeta_{(x,\tau_1+1)}\ldots \zeta_{(x,\tau_3)}$. Clearly $\tau_3\ge \tau_1+3KR$ and hence $\tau_3 > \tau_2.$ 
Also notice that using Lemma \ref{monojump}, on the event that $M_*(x)$ exists, $$M_*(x) \in [\tau_2,\tau_3].$$ 

We now bound the maximum number of sleepy particles with first label $x$ ever to be on the interval $\bigl((x-1)K,(x+1)K\bigr)$ in the time interval, $[\tau_2,\tau_3],$ i.e.,\ $$\max_{\tau_2 \le  j\le \tau_3} \tilde S_{x,x+1}(j)+\tilde S_{x,x-1}(j).$$ 
 
To do this,  we specify our choice for $\lambda.$  
\begin{equation}\label{def979}
\sP_{*}(\zeta_{(x,j)} \text{ does not have a lazy step for } 1\le j \le 2K \text{ and }  \tau_1 \le j \le \tau_3  )\geq 1-\e/8.
\end{equation}
Note that by choosing $\lambda$ small enough dependent on $\e,K$, both \eqref{def978} and \eqref{def979} is satisfied. We might need to choose $\lambda$ to be even smaller, depending on some conditions that will appear in the subsequent proofs. 
We now have the following lemma.
\begin{lemma}\label{probbound675}  With the choice of $\lambda$ satisfying \eqref{def979}, conditional on the randomness exposed at sites less than $xK$ and at $x$ up to $\tau_1$ i.e., $\sF_{x-1} \cup \sG_x(\tau_1),$ the following uniform bound on the number of sleepy particles   $S_{x,x+1}(j)+\tilde S_{x,x-1}(j)$ holds:  $$\sP_{*}(\max_{\tau_2 \le  j\le \tau_3} \tilde S_{x,x+1}(j)+\tilde S_{x,x-1}(j)=0\mid \sF_{x-1} \cup \sG_x(\tau_1))\ge 1-\e.$$
\end{lemma}
 \begin{proof}
Conditioning on the filtration $\sF_{x-1}\cup \sG_{x}(\tau_1)$,  consider the configuration, $\eta^{SS}_{x}(\tau_1)$ (recall from the beginning of \S~\ref{ssd}, that $\eta^{SS}_{x}(\tau_1)$ denotes the configuration of sleepy particles with labels of type $(x,\cdot)$ after $\tau_1$ rounds of the single site dynamics on the interval $((x-1)K, (x+1)K)$). Clearly $|\eta^{SS}_{x}(\tau_1)|\leq 2K$.

By choice of $R,$ with probability at least $1-\e/4$ there has been at least a $\textbf{left instruction}$ and a $\textbf{right instruction}$ in the interval $(\tau_1,\tau_2]$. Let 
$$s_1=\min\{k>\tau_1: \zeta_{(x,k)}~\text{is a \bf{left instruction}}\},~s_2=\min\{k>\tau_1: \zeta_{(x,k)}~\text{is a \bf{right instruction}}\}.$$ 
Moreover by \eqref{def979} with failure probability at most $\e/8$, no random walk $\zeta_{(x,j)}$ for $j\in [\tau_1,\tau_3]$ will have any lazy step and hence no new particle falls asleep. 

Now considering the single site dynamics at $x,$ from time $\tau_1$ to $\tau_2=\tau_1+R,$ at time $s=\max(s_1,s_2)$, all the sleepy particles in $\eta^{SS}_{x}(\tau_1)$ (at most $2K$) have woken up on the interval $((x-1)K,x]$ and $(x,(x+1)K)$ since $\zeta_{(x,s_1)}$ or $\zeta_{(x,s_2)}$  hits  them.
Now we claim that none of these woken up particles will fall asleep again before hitting  $\{x\pm 1\}K$ with probability at least $1-\e/8.$ Indeed, observe that these particles follow lazy random walk trajectories starting from vertices  in $((x-1)K, (x+1)K)$ that are measurable with respect to $\sG_x(\tau_1)$, but the sequence of lazy random walk steps taken by the particles are independent of $\sG_x(\tau_1)$ (by definition, these steps were not revealed in $\sG_x(\tau_1)$). Observe also that the probability of there being no lazy step before a lazy random walk started at $y$ in the interval $((x-1)K, (x+1)K)$ exits the same is minimized when $y=xK$. The claim therefore follows from \eqref{def979}. 

Putting things together, with probability at least $1-\e$, we have $\tilde S_{x,x-1}(j)+\tilde S_{x,x+1}(j)=0$ for all $j\in [s,\tau_3]$ and this completes the proof. 
\end{proof}
For notational convenience, let us denote the complement of the event in Lemma \ref{probbound675} by $\mathcal{B}_1,$ which occurs with probability at most $\e.$
Let us now return to the proof of \eqref{indbound}.
\subsubsection*{Proof of Lemma \ref{induction675}.}
Fix $i>0$. We want to prove 
$$\E\left[e^{[\tilde S_{i,i+1}+\tilde S_{i,i-1}]}\mathbf{1}\bigl[\mathcal{R}(\underline{y};i)\cap \mathcal{D} \bigr] \middle| \sF_{i-1}\right]\le (.52)^{y_i}.
$$
Recall from previous discussion (also see using \eqref{ind1}), that $\ell_{i}$ is $\sF_{i-1}$ measurable. 
We will also need the filtration $\sG_{i}(j)$ from \eqref{localfil} and the stopping time $\tau_1=\tau_1(i)$ from \eqref{stop1}.  We compute 
$$\E\left[e^{[\tilde S_{i,i+1}+\tilde S_{i,i-1}]}\mathbf{1}\bigl[\mathcal{R}(\underline{y};i)\cap \mathcal{D} \bigr] \middle| \sF_{i-1} \cup \sG_{i}(\tau_1) \right].$$

Now note that given $\cR(\underline{y};i-1),  \sF_{i-1} \cup \sG_{i}(\tau_1)$, for the event $\cR(\underline{y};i-1)$  to occur, two things need to hold: 
\begin{enumerate}
\item  $M_*(i)$ exists. 
\item  $\ell^{SS}_{i}(M_*(i)+y_i)=\ell^{SS}_i(M_{*}(i))$.
\end{enumerate}
Let us assume that the first event holds. In that case, 
conditional on $ \sF_{i-1} \cup \sG_{i}(M_*(i)),$ clearly, by definition, for this to happen, $\zeta_{(i,k)}$ cannot be a $\textbf{left instruction}$ for any $k\in [M_*(i)+1, M_*(i)+y_i]$. Let $\mathcal{B}_2$ denote this event. Now by \eqref{probbound342}, for $\lambda$ small enough, the probability of a $\textbf{left instruction}$ is at least $.49.$ Thus,
\begin{equation}
\label{e:b3bound}
\sP_{*}[\mathcal{B}_2\mid \sF_{i-1} \cup \sG_{i}(\tau_1)] \leq (0.51)^{y_i},
\end{equation}
  
Now we fix a constant $C$ to be specified later (whose purpose will be clear soon).
Let $\mathcal{B}_3$ denote the event that at least one of the random walks $\zeta_{(i,M_*(i)+1)},\ldots,\zeta_{(i,M_*(i)+C)} $ takes a lazy step (see \eqref{steps1}). By taking $\lambda=\lambda(C)$ small enough,
\begin{equation}
\label{e:b4bound}
\P[\mathcal{B}_3\mid \sF_{i-1} \cup \sG_{i}(\tau_1)] \leq \e.
\end{equation}
The above choices allow us to conclude that 
$$
\E\left[e^{[\tilde S_{i,i+1}+\tilde S_{i,i-1}]}\mathbf{1}\bigl[\mathcal{R}(\underline{y};i)\cap \mathcal{D} \bigr]  \middle|  \sF_{i-1} \cup \sG_{i}(\tau_1) \right ]$$
\begin{align}\label{bound23}
\leq \left \{ 
\begin{array}{cc}
e^{2K}(0.51)^{y_i} & \text {if } y_i\ge C\\
3\e e^{2K}+(0.51)^{y_i} & \text {if } y_i<  C.
\end{array}
\right .
\end{align}
The first case in the above computation is straightforward: we use the fact $\tilde S_{i,i+1}+\tilde S_{i,i-1}$ is at most $2K$,  and
$\mathcal{R}(\underline{y};i)\cap \mathcal{D}  \subset \mathcal{B}_2$ (clear from the discussion preceding the definition of $\mathcal{B}_2$),
 and \eqref{e:b3bound}.
 
For the second case, first  recall the set $\cB_1,$ (complement of the set defined in Lemma \ref{probbound675}). 
Now notice that for $y_i<C$, by union bound, and the set inclusion just mentioned above,
\begin{align*}
\E\left[e^{[\tilde S_{i,i+1}+\tilde S_{i,i-1}]}\mathbf{1}\bigl[\mathcal{R}(\underline{y};i)\cap \mathcal{D}\bigr] \mid  \sF_{i-1} \cup \sG_{i}(\tau_1) \right]\le & \P(\mathcal{B}_1 \cup \mathcal{B}_3 \mid \sF_{i-1} \cup \sG_{i}(\tau_1) )e^{2K}+\\
& \P(\mathcal{B}_1^{c}\cap \mathcal{B}_3^{c} \cap \mathcal{B}_2\mid \sF_{i-1} \cup \sG_{i}(\tau_1) ).
\end{align*}

In the first term on the RHS, we use the crude bound $\tilde S_{i,i+1}+\tilde S_{i,i-1} \le 2K.$  In the second term, by definition, on the event $\cB^c_1,$ we have $\tilde S_{i,i+1}+\tilde S_{i,i-1}=0$
The second conclusion in \eqref{bound23} now follows from Lemma \ref{probbound675},  \eqref{e:b3bound} and \eqref{e:b4bound}. Now we fix $C>0$ (depending on $K$) such that the bound in \eqref{bound23} in the case $y_i\ge C$ is in fact less than $(0.52)^{y_i}$. To have the same bound of $(0.52)^{y_i}$ in the case $y_i<C,$
we choose $\e>0$  small enough (depending on $C$). Thus averaging \eqref{bound23} over $\sG_{i}(\tau_1)$  we get that, 
$$\E\left[e^{[\tilde S_{i,i+1}+\tilde S_{i,i-1}]}\mathbf{1}\bigl[\mathcal{R}(\underline{y};i)\cap \mathcal{D} \bigr] \middle | \sF_{i-1}\right]\le (0.52)^{y_i}.$$ This completes the proof of \eqref{indbound} and \eqref{indboundneg} can be established similarly. \qed

Using Lemma \ref{quantbnd1}, we complete the proof of Lemma \ref{l:smallodo}.
\begin{proof}[Proof of Lemma \ref{l:smallodo}]
Note that by Lemma \ref{yexist}, and union bound, it follows that that 
\begin{align}\label{bnd1part}
\sP_{*,\hat{\eta}}(\mathcal{D}(a_0,f_0^{+},f_0^{-})\cap \A)&\le \sum_{\underline y}\sP_{*,\hat{\eta}}(\mathcal{D}(a_0,f_0^{+},f_0^{-})\cap \cR(\underline y)\cap \A),\\
\nonumber
& \le 
 \sum_{\underline y}  e^{-\mu r/4} e^{2K} \left((.52)^ {\sum_{i=-2r/K+1}^{1}y_i}+(.52)^ {\sum_{i=1}^{2r/K-1}y_i}\right)\\
\label{finalcal123}
&\le 2e^{2K} e^{-\mu r/4}\left(\frac{1}{0.48}\right)^{2r/K}\leq e^{-cr}. 
\end{align}
for some $c>0,$ by choosing $K=K(\mu)$ to be large enough, where the last inequality holds for large enough $r$. This establishes \eqref{e:smallodo} and completes the proof.
\end{proof}
\begin{rem}\label{relation4532} Although highly suboptimal, we point out that the for small $\mu$, the bound on $\lambda$ obtained from the above argument is exponentially small in $\frac{1}{\mu}$ i.e., any $\lambda\le \exp(-\frac{C}{\mu})$ for some universal positive constant $C$ works. To see this,  first note that  \eqref{finalcal123} implies that it suffices to take $K=\frac{D}{\mu}$ for some universal constant $D$.  By the discussion following \eqref{bound23},  it is clear that  $C$ in \eqref{bound23} can be taken to be a universal constant times $K$ and hence $\e$ needs to be exponentially small in $K.$  Fixing such an $\e,$ this implies that $R$ in \eqref{def978} can be taken to be a constant multiple of $K$. Now we determine the choice of $\lambda.$ By taking $\chi$ to be $.1$, we see that $\tau_3=O(K^2),$ with probability at least $1-\frac{\e}{100}.$ Thus $\lambda$ has to be small enough to simultaneously satisfy \eqref{probbound342} and \eqref{def979}. It is easy to see that satisfying the latter for the above choices of $K,  R$ and $\e,$ is enough. 
However to satisfy the latter, one needs 
$\lambda$ small enough to ensure that $O(K^2)$ many  random walk paths starting from the origin, do not fall asleep before hitting $\pm K$ with probability at least $1-\frac{\e}{100}.$ Total number of steps taken by all such random walks is at most $K^6$ with probability at least $1-e^{-\Theta(K^2)},$ and hence taking $\lambda=\frac{\e}{200K^6}$ is sufficient. Since $K^6$ is only a polynomial in $\frac{1}{\mu},$ taking $\lambda$ to be $\e^2$ suffices for all small $\mu.$
\end{rem}
Next we analyze the remaining case i.e.,\ when $M(0)$ is large.
 
\subsection{$M(0)$ is large.}
Recall the renormalized odometer $M(\cdot)$ from \eqref{e:reodo}. To complete the proof of Lemma \ref{key1}, we still need to deal with the case when $M(0)\geq r^{6}$. We have the following lemma.

\begin{lemma}
\label{l:largeodo}
In the set up of Lemma \ref{key1}, we have 
\begin{equation}
\label{e:largeodo}
\sP_{*,\hat{\eta}}(M(0)\geq r^{6}, \A)\leq e^{-cr}
\end{equation}
for some constant $c>0$.
\end{lemma}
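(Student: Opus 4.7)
The plan is to establish the stronger unconditional bound $\mathscr{P}_{*,\widehat{\eta}}(M(0) \ge r^6) \le e^{-cr}$, which trivially dominates $\mathscr{P}_{*,\widehat{\eta}}(M(0) \ge r^6, \mathcal{A})$. Decompose $M(0) = \sum_{i=1}^{|\widehat{\eta}|} V_i$, where $V_i$ counts the number of times the $i$-th initial particle receives a label of the form $(0,\cdot)$; this is exactly the number of arrivals at $0$ by that particle on the renormalized lattice. Since $|\widehat{\eta}| \le 2\mu r$, a large deviation bound for this sum suffices.

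The key step is a coupling that dominates each $V_i$ by the number of visits to $0$ of an independent symmetric simple random walk $Y^i$ on $K\Z$ started at the $i$-th particle's initial lattice position and absorbed at $\pm 2r$. The crucial observation is that each meta-step of a particle in the Labeled ARW (one lattice-to-lattice crossing) is driven by a fresh killed walk $\zeta_{(x,k)}$ which, by symmetry of the underlying lazy walk, exits its box $((x-1)K,(x+1)K)$ at $(x+1)K$ or $(x-1)K$ with probability $1/2$, independent across $(x,k)$. Sleep and wake-up events affect only the timing of meta-steps, and permanent sleep can only terminate a meta-trajectory early, thereby reducing $V_i$. Using Observation \ref{crucial} and the Abelian property, one builds a coupling in which each particle's coarse trajectory agrees with $Y^i$ up to its absorption (by reaching $\pm 2r$ or falling asleep permanently), yielding $V_i \le V_i^{\mathrm{ref}} := |\{t \ge 0 : Y^i_t = 0\}|$ almost surely.

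For the concentration, by a standard Gambler's ruin calculation on $K\Z \cap [-2r,2r]$, the visit count $V_i^{\mathrm{ref}}$ is stochastically dominated by a geometric random variable with parameter $p = \Theta(K/r)$, so has mean $O(r/K)$ and an MGF finite for $|t| < c_0 K/r$. Taking the reference walks $Y^i$ independent (possible since the stacks $\zeta_{(\cdot,\cdot)}$ are i.i.d.), a standard Chernoff bound for a sum of independent geometrics yields
\[
\mathscr{P}_{*,\widehat{\eta}}\Big(\sum_{i=1}^{|\widehat{\eta}|} V_i^{\mathrm{ref}} \ge C \mu r^2 / K\Big) \le e^{-cr}
\]
for some $C, c > 0$ depending on $\mu, K$. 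Since $r^6 \gg C \mu r^2/K$ for $r$ large, combined with the coupling this gives $\mathscr{P}_{*,\widehat{\eta}}(M(0) \ge r^6) \le e^{-cr}$, as required.

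The main obstacle will be making the coupling rigorous. A priori the $V_i$'s are correlated because same-first-label particles can wake one another up in the Labeled ARW dynamics, potentially allowing a particle to accumulate more visits to $0$ than an isolated walker would. The point is that the directions of distinct meta-steps are determined by the independent symmetric killed walks $\zeta_{(\cdot,\cdot)}$: wake-ups and sleep affect only the when and whether, not the where, of meta-steps. Processing particles in an order compatible with the Abelian structure and sampling fresh independent copies of killed walks as needed to continue the reference trajectories after a labeled particle's early absorption delivers the desired almost sure domination $V_i \le V_i^{\mathrm{ref}}$ on a common probability space.
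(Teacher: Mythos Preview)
Your approach is correct and takes a genuinely different route from the paper's proof.

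The paper does not try to bound $M(0)$ unconditionally. Instead it exploits $\mathcal{A}$ in an essential way: it first shows via Azuma--Hoeffding that with probability at least $1-e^{-cr}$ the left/right imbalance $|L'_i(T)-R'_i(T)|$ is at most $T^{2/3}/2$ simultaneously for all sites $i$ and all $T\ge r^4$. On this event, the bound $|F_i^{\pm}|\le 2\mu r$ (which holds on $\mathcal{A}$ because no particle escapes) feeds into a recursion $M(i+1)\ge M(i)-\max(M(i),M(0))^{2/3}-O(r)$, and iterating this from $M(0)\ge r^6$ forces $M(2r/K)>0$, contradicting $\mathcal{A}$. So the paper's argument is a propagation/contradiction scheme that uses $\mathcal{A}$, whereas you prove the stronger unconditional statement $\mathscr{P}_{*,\widehat\eta}(M(0)\ge r^6)\le e^{-cr}$ (in fact $M(0)=O(r^2)$ w.h.p.), which is both sharper and more direct.

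The one place where your write-up should be tightened is the justification of the coupling. Neither Observation~\ref{crucial} nor the (unlabeled) Abelian property is really the tool you need here. What makes the coupling work is that the stacks $\{\zeta_{(x,j)}\}$ can equivalently be reindexed as $\{\hat\zeta^{\,i}_k\}$, by \emph{particle and meta-step number} rather than by \emph{site and order of arrival}: since every time a particle begins a new meta-step it draws a killed walk that is fresh and i.i.d.\ conditional on the past, an exploration (``principle of deferred decisions'') argument shows the reindexed process has the same law. Under this reindexing the reference walks $Y^i$ are built from disjoint independent families $\{\hat\zeta^{\,i}_k\}_k$, hence are genuinely independent, and each particle's realised coarse trajectory is a prefix of $Y^i$ (sleep/wake-up events affect only \emph{whether} a meta-step is completed, never its direction). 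Your final paragraph captures exactly this intuition; just replace the appeal to the Abelian property by the reindexing argument.
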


\begin{proof}
The proof of this follows easily from random walk estimates.  We will in fact show that the odometer at the origin, of the actual ARW dynamics stabilizing $\eta$ in $[-2r,2r]$, i.e., $u_{[-2r,2r]}(0)$ is smaller than $r^{6}$ at the origin, with probability at least $1-e^{-cr}.$
Now by Lemma \ref{o:ver1comp}, $u_{[-2r,2r]}(0),$ is an upper bound on $M(0)$ and hence we will be done. 
To prove $u_{[-2r,2r]}(0)\le r^6,$ note that by definition each particle in ARW dynamics, follows the trajectory of 
a lazy symmetric random walk with laziness probability $\frac{\lambda}{1+\lambda}$ till it either falls asleep eventually inside $[-2r,2r]$ or exits the interval. 
Now $u_{[-2r,2r]}(0)$ is the total number of times, a particle hits the origin.  Standard random walk facts now imply that the probability that,
started arbitrarily inside $[-2r,2r],$  a lazy random walk does not exit the interval $[-2r,2r]$ in $r^{3}$ steps, is at most  $e^{-cr},$ where $c>0$ depends on the laziness parameter $\frac{\lambda}{1+\lambda}$. Now  since there are at most $2\mu r$ particles, whose trajectories agree with independent random walk paths, by a simple union bound, $\P(u(0)\le 2 \mu r^4)\ge 1-e^{-cr}.$ 
\end{proof}

Finally we put everything together to establish Lemma \ref{key1}.

\subsection{Proof of Lemma \ref{key1}}
By the assumption on $\hat{\eta}$, notice that $|F_0^{+}|, |F_0^{-}|\leq 2\mu r$. Hence taking a union bound over all triples $(a_0,f_0^+,f_0^{-})$ satisfying the conditions in the statement of Lemma \ref{l:smallodo}, it follows from Lemma \ref{l:smallodo} that 
$$\P(\A, M(0)\leq r^{6})\leq e^{-cr/2}.$$
Lemma \ref{key1} now follows from the above bound and Lemma \ref{l:largeodo}.
\qed

\section{Concluding Remarks and Open Questions}
\label{s:question}
In this paper, we have shown that ARW on $\Z$ remains in active phase even starting with arbitrarily low density of particles provided the sleep rate is sufficiently small. In particular this implies that the critical density $\mu_c(\lambda) <1$ for small enough sleep rate $\lambda$. However our  understanding of the process is still far from complete. Investigating  the following seem the  natural next step. 
\begin{enumerate}
\item \textbf{What happens when $\lambda$ is large?} It is believed that for any $\lambda$ finite, $\mu_c{(\lambda)}<1$. However the only known results in this direction is that $\mu_c{(\lambda)}\leq 1$ for all $\lambda$ (\cite{GA10,RS12,Shellef10}) and $\mu_c{(\lambda)}=1$ for $\lambda=\infty$ \cite{CRS14}. As pointed out in Remark \ref{relation4532}, our arguments, yield an exponential relation between $\mu_c$ and $\lambda$. It is natural to conjecture that $\mu_c$ and $\lambda$ should be related quadratically owing to diffusive scaling of random walk. 

\item \textbf{ARW on $\Z^2$:} Does Theorem \ref{main} hold for ARW on $\Z^2$? It is believed that the answer to this question is affirmative. 
In \cite{stauffer} the authors prove Theorem \ref{main}, in the case of all vertex transitive transient graphs which includes $\Z^{d}$ for all $d\ge 3.$

\item \textbf{Critical density for SSM:} For the Stochastic Sandpile Model, is the critical density $\mu_c$ strictly less than one? As mentioned before, numerical evidence suggests an affirmative answer to this question, while the best known rigorous bound in \cite{RS12} gives $\mu_c\leq 1$. A strict inequality here would be a substantial progress because of the same reasons as in Remark \ref{r:more}.    
\end{enumerate}

\section*{Acknowledgements}
The authors thank Vladas Sidoravicius for introducing this problem to them and for many illuminating discussions and Lorenzo Taggi for telling them about his results on the biased activated random walks. RB thanks Allan Sly for useful discussions. The authors also thank two anonymous referees and Leonardo Rolla for many useful comments and suggestions that helped improve the paper. This work was completed while RB and SG were graduate students at the  Department of Statistics at University of California, Berkeley  and the Department of Mathematics at University of Washington, Seattle, respectively. RB was supported by a UC Berkeley Graduate Fellowship.  SG and CH acknowledge support from NSF grant DMS-1308645 and NSA grant H98230-13-1-0827. This work was initiated while RB and SG were interns at Microsoft Research, Redmond. They thank the Theory group for its hospitality.

\bibliography{arw}
\bibliographystyle{plain}
\end{document}